\numberwithin{figure}{section}
\numberwithin{table}{section}
\numberwithin{equation}{section}
\theoremstyle{plain}
\newtheorem{thm}{Theorem} 
\newtheorem{theorem}[equation]{Theorem} 
\newtheorem{corollary}[equation]{Corollary} 
\newtheorem{lemma}[equation]{Lemma}
\newtheorem{proposition}[equation]{Proposition}
\theoremstyle{definition}
\newtheorem{definition}[equation]{Definition} 
\newtheorem{example}[equation]{Example}
\newtheorem{question}[equation]{Question}
\newtheorem{remark}[equation]{Remark}
\newtheorem*{theorem*}{Theorem}
\DeclareMathOperator\Aut{Aut}
\DeclareMathOperator\diag{diag}
\DeclareMathOperator\Ext{Ext}
\DeclareMathOperator\Frac{Frac}
\DeclareMathOperator\GL{GL}
\DeclareMathOperator\gldim{gldim}
\DeclareMathOperator\gr{gr}
\DeclareMathOperator\lcm{lcm}
\DeclareMathOperator\rank{rank}
\DeclareMathOperator\SL{SL}
\DeclareMathOperator\Span{Span}
\DeclareMathOperator\Tr{Tr}
\DeclareMathOperator\trace{trace}
\newcommand\NN{\mathbb N}
\newcommand\ZZ{\mathbb Z}
\newcommand\cF{\mathcal F}
\newcommand\cO{\mathcal O}
\newcommand\cS{\mathcal S}
\newcommand\bp{\mathbf p}
\newcommand\bq{\mathbf q}
\newcommand\cnt{\mathcal Z}
\newcommand\detq{\det\nolimits_q}
\renewcommand{\int}{\mathrm{int}}
\newcommand\inv{^{-1}}
\newcommand\iso{\cong}
\newcommand\tensor{\otimes}
\newcommand\tornado{\xi}
\newcommand\grp[1]{\langle #1 \rangle}
\renewcommand{\to}{\ensuremath{\longrightarrow}}
\newcommand\gu{\underline g}
\newcommand\cu{\underline \chi}
\newcommand\kk{\Bbbk}
\newcommand\qp{\kk_{\mu}[u,v]}
\newcommand\wa{A_1^\mu(\kk)}
\newcommand\qmn{\cO_q(M_N(\kk))}
\newcommand\qgl{\cO_q(\GL_N(\kk))}
\newcommand\qsl{\cO_q(\SL_N(\kk))}
\newcommand{\wanp}{A_t^{\bp,\gamma}(\kk)}
\newcommand{\ol}{\overline}
\newcommand{\ul}{\underline}
\newcommand{\wh}{\widehat}
\newcommand{\mc}{\mathcal}
\newcommand{\ord}{\textup{ord}}
\newcommand{\defeq}{\vcentcolon=}
\newenvironment{smatrix}
  {\left(\begin{smallmatrix}}
  {\end{smallmatrix}\right)}
\let\c@table\c@figure 
\title{Actions of quantum linear spaces on quantum algebras}
\author[Cline]{Zachary Cline}
\address{Bucknell University, Department of Mathematics, Lewisburg, Pennsylvania 17837}
\email{z.cline@bucknell.edu}
\author[Gaddis]{Jason Gaddis}
\address{Miami University, Department of Mathematics, 301 S. Patterson Ave., Oxford, Ohio 45056} 
\email{gaddisj@miamioh.edu}
\subjclass[2010]{16T05,16S36,16W50,16W70}
\keywords{Pointed Hopf algebras, Hopf actions, quantum algebras, quantum linear spaces}
\begin{document}

\begin{abstract}
We study actions of bosonizations of quantum linear spaces on quantum algebras. Under mild conditions, we classify actions on quantum affine spaces and quantum matrix algebras.
In the former case, it is shown that all actions of generalized Taft algebras are trivial extensions of actions on quantum planes.
In both cases we achieve bounds on the rank of the bosonization acting on the algebra. 
\end{abstract}

\maketitle

\section{Introduction}

Due to the principle of quantum rigidity, quantum algebras exhibit few classical symmetries, i.e., linear group actions.
For example, the automorphism group of the quantum plane $\kk_q[u,v]$ with $q^2 \neq 1$ is isomorphic to $(\kk^\times)^2$ \cite{AC}.
Here quantum algebras will not take on a specific meaning, but will generally be understood to represent some algebra whose relations depend on parameters in $\kk$.
This includes quantum affine spaces and quantum matrix algebras, both of which are fundamental objects in the study of noncommutative algebra and noncommutative algebraic geometry.

In many cases, the (graded/filtered) automorphism group for quantum algebras are known, see, e.g., \cite{AC,CPWZ2,y-llc}.
The natural next step, then, is to study quantum symmetries, or actions by Hopf algebras.
Semisimple Hopf actions on quantum planes and quantum Weyl algebras are well-understood \cite{CWWZ,CKWZ}.
Our goal is to better understand non-semisimple Hopf actions,
specifically actions by pointed Hopf algebras, which themselves have attracted much recent interest
\cite{Cl,KW}.

The impetus for this work was a classification by Won, Yee, and the second-named author of Taft algebra actions on quantum planes and quantum Weyl algebras \cite{GWY}.
Here we ask how much this classification can be extended.
We do this in several ways.
First, we look at actions of {\it generalized} Taft algebras and find that the classification problem is not significantly different.
Secondly, we consider actions on higher-dimensional algebras, specifically quantum affine spaces and quantum matrix algebras.
Finally, we study actions of bosonizations of quantum linear spaces (see \cite{AS2,EW}).

Bosonizations of quantum linear spaces form an important subclass within the classification of finite-dimensional pointed Hopf algebras of Andruskiewitsch and Schneider \cite{AS1}. In some sense, they may be thought of as higher rank generalized Taft algebras.
Under mild hypotheses --- in particular, we require all parameters have order greater than 2 --- we classify actions of generalized Taft algebras on quantum affine spaces and quantum matrix algebras.
This is then extended to determine all actions of bosonizations of quantum linear spaces, again under mild hypotheses.
Specifically, we achieve bounds on the rank of these bosonizations.
It is our hope that our methods may be applied for further classifications and a long term goal is to understand the classification of {\it all} finite-dimensional pointed Hopf algebras on these algebras.

\section{Preliminaries}

Throughout, $\kk$ is an algebraically closed, characteristic zero field and all algebras are associative $\kk$-algebras. All unadorned tensor products should be regarded as over $\kk$. For a Hopf algebra $H$, and grouplike elements $g,h \in G(H)$, we denote by $P_{g,h}$ the $(g,h)$-skew-primitive elements, i.e. all $x \in H$ so that $\Delta(x) = g \otimes x + x \otimes h$.

An algebra $A$ is {\sf ($\NN$)-graded} if there exists a vector space decomposition $A = \bigoplus_{i \in \NN} A_{(i)}$ such that ${A_{(i)} \cdot A_{(j)} \subset A_{(i+j)}}$.
Further, $A$ is {\sf connected} if $A_{(0)} = \kk$, {\sf affine} if $A_{(k)}$ is finite-dimensional as a $\kk$-vector space for all $k$, and {\sf generated in degree one} if $A_{(1)}$ generates $A$ as an algebra.
With the exception of one family in Section \ref{sec.add}, all algebras considered in this work are affine connected graded and generated in degree one.

\subsection{Quantum algebras}
Here we define more formally our algebras of interest.
Our standard reference is \cite{BGquantum}.

A matrix $\bp = (p_{ij}) \in M_t(\kk^\times)$ is {\sf multiplicatively antisymmetric} 
if $p_{ii}=1$ and $p_{ij}=p_{ji}\inv$ for all $i,j$.
Given a multiplicatively antisymmetric matrix $\bp$, the 
{\sf quantum affine space}, denoted $A=\kk_\bp[u_1,\hdots,u_t]$, is generated by $u_1,\hdots,u_t$ subject to the relations $u_iu_j=p_{ij}u_ju_i$.
If $I \subset \{1,\hdots,t\}$, then we denote by $A_I$ the subalgebra of $A$ generated by the $u_i$, $i \in I$.
If $I=\{i,j\}$, then we simply write $A_{ij}$ for the {\sf quantum plane} $\kk_{p_{ij}}[u_i,u_j]$.

Let $q \in \kk^\times$.
  We denote by $\cO_q (M_n(\kk))$ the {\sf single parameter quantum $n \times n$ matrix algebra}.
  It is generated by $Y_{ij}$ for $1 \leq i, j \leq n$ with relations
  \[
    Y_{i j} Y_{\ell m} = 
      \begin{cases}
        q Y_{\ell m} Y_{ij}, & (i < \ell, \ j = m) \\
        q Y_{\ell m} Y_{ij}, & (i = \ell, \ j < m) \\
        Y_{\ell m} Y_{ij}, & (i < \ell, \ j > m) \\
        Y_{\ell m} Y_{ij} + (q - q^{-1}) Y_{im} Y_{\ell j}, & (i < \ell, \ j < m).
      \end{cases}
  \]
  For $\cO_q(M_2(\kk))$, we will use the traditional notation $A = Y_{1,1}, \ B = Y_{1,2}, \ C = Y_{2,1}, \ D = Y_{2,2}$.
  Thus, $\cO_q(M_2(\kk))$ has relations
  \[
    AB = qBA, \quad
    AC = qCA, \quad
    BD = qDB, \quad
    CD = qDC, \quad
    BC = CB, \quad
    AD = DA + (q - q^{-1}) BC.
  \]

By \cite[Theorem 5.2]{EW2}, under certain conditions on $\bp$, the action of a finite-dimensional Hopf algebra on $\kk_\bp[u_1,\hdots,u_t]$ factors through a group action. In particular, this holds when $p_{ij}=q \in \kk^\times$ for all $i>j$ (the single parameter case) and $q$ is not a root of unity. That is, these algebras possess no true finite-dimensional quantum symmetry.
Similarly, since $\Frac(\cO_q(M_N(\kk)))$ is isomorphic to a $\kk_\bp[u_1,\hdots,u_{N^2}]$ for suitable $\bp$ \cite{cliff}, then a similar result holds in this setting. Part of the current work is to better understand the quantum symmetries when the parameters fall outside of these conditions.

\subsection{Quantum linear spaces}
Let $\theta \in \NN$, 
$G$ a finite abelian group,
$\gu = g_1,\hdots,g_\theta \in G$, and 
$\cu = \chi_1,\hdots,\chi_\theta \in \widehat{G} = \text{Hom}_{\text{(Groups)}}(G, \kk^\times)$ such that $\chi_i(g_j) = \chi_j(g_i)^{-1}$ for $i \neq j$ and $m_i \defeq \ord(\chi_i(g_i)) \geq 2$.
Let $\mc R(g_1, \ldots, g_\theta, \chi_1, \ldots, \chi_\theta)$ be a {\sf quantum linear space} over $G$,
that is, the braided Hopf algebra in ${}^{\kk G}_{\kk G} \mc{YD}$, generated by $x_1, \ldots, x_\theta$, with relations
\[	x_i^{m_i} = 0, \quad 	x_i x_j = \chi_j(g_i) x_j x_i  \ (i \neq j).\]
The coalgebra structure is determined by $x_i \in P_{1,1}$.
The comodule and module structure are determined by
\[	\delta(x_i) = g_i \otimes x_i, \quad h \cdot x_i = \chi_i(h) x_i \quad (h \in G).\]
For $g \in G$, let $\nu_g = \#\{i \mid g_i = g\}$.

The bosonization $H \defeq \mc R \# \kk G$ is the Hopf algebra (over $\kk$) generated by $G$, $x_1, \ldots, x_\theta$, 
with the relations of $G$ as well as 
\[
	x_i^{m_i} = 0, \quad
	x_i x_j = \chi_j(g_i) x_j x_i \ (i \neq j), \quad
	h x_i = \chi_i(h) x_i h \ (h \in G),
\]
with $G = G(H)$ and $x_i \in P_{g_i, 1}(H)$.
Throughout, we denote $H$ by $B(G,\gu,\cu)$ and refer to $\theta$ as its {\sf rank}.
We will primarily in this paper be concerned with actions of $B(G,\gu,\cu)$ on various families of algebras.

\begin{example}
Let $G=\ZZ_9=\grp{g}$ and let $\omega$ be a primitive $9$th root of unity.
Set $g_1=g$ and $g_2 = g^4$ and define $\chi_1,\chi_2 \in \wh G$ by $\chi_1(g)=\omega^3$ and $\chi_2(g)=\omega^6$. Then
\[ \chi_1(g_1) = \omega^3, \quad \chi_1(g_2) = \omega^3, \quad \chi_2(g_2) = \omega^6, \quad \chi_2(g_1) = \omega^6.\]
It now follows that all of the compatibility criteria are met. In particular, \[\chi_1(g_2) \chi_2(g_1) = 1, \quad \ord(\chi_1(g_1)), \ \ord(\chi_2(g_2)) \geq 2.\]
Thus, $B(G,\{g_1,g_2\},\{\chi_1,\chi_2\})$ is a rank 2 bosonization of a quantum linear space.
\end{example}

Let $\gamma \in \kk$, let $n,m \in \NN$ such that $m \mid n$, and let $\lambda \in \kk$ be a primitive $m$th root of unity.
The {\sf generalized Taft algebra} $T_n(\lambda,m,\gamma)$ is generated by a grouplike element $g$ and a $(g,1)$-skew primitive element $x$ subject to the relations
\[ g^n = 1, \quad x^m = \gamma(g^m-1), \quad gx=\lambda xg.\]
If $\chi \in \widehat{\grp{g}}$ is chosen such that $\chi(g) = \lambda$, then $T_n(\lambda,m,0) = B(\grp{g},g,\chi)$ and $T_n(\lambda,n,0)$ is the $n$th Taft algebra \cite{T}.
Given $B(G,\gu,\cu)$, we denote by $B_i$ the subalgebra of $B$ generated by $\{g_i,x_i\}$. Then $B_i \iso T_{n_i}(\lambda_i,m_i,0)$ as Hopf algebras. Here, $n_i$ denotes $\ord(g_i)$, $\lambda_i := \chi_i(g_i)$, and $m_i = \ord(\lambda_i)$.

In the $B(G,\gu,\cu)$, the $x_i$ are all nilpotent.
However, we may occasionally drop the nilpotency requirement when considering general pointed Hopf algebras of rank one.
We do not define rank here, but use the following classification of rank one pointed Hopf algebras in characteristic zero, due to Krop and Radford.

\begin{thm}[{\cite[Theorem 1]{KR}}]
Let $G$ be a finite group with character map $\chi:G \rightarrow \kk^\times$, and take $g \in \cnt(G)$ and $\gamma \in \kk$. Set $m=\ord(\chi(g))$.
Let $H(G,g,\chi,\gamma)$ denote the Hopf algebra generated by $G$ and a $(g,1)$-skew-primitive element $x$ subject to the group of relations of $G$ and the relations
\[ x^m = \gamma(g^m-1) \quad\text{and}\quad ax=\chi(a)xa\]
for all $a \in G$. Every finite-dimensional pointed Hopf algebra of rank one is isomorphic to $H(G,g,\chi,\gamma)$ for some $G,\ g,\ \chi$, and $\gamma$.
\end{thm}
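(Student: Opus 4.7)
The plan is to start from the definition of a pointed Hopf algebra: the coradical $H_0$ coincides with $\kk G$ for some group $G$, finite since $H$ is. The rank-one hypothesis is designed to ensure that, apart from the trivial skew-primitives $\kk(h-1)$ for $h \in G$, there is only one additional skew-primitive direction, generated by some $x \in P_{g,1}(H)$ for a distinguished $g \in G$. The strategy then has three steps: locate $g$, extract the character $\chi$ from the conjugation action of $G$ on $x$, and compute the relation satisfied by $x^m$.

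First I would analyze conjugation. For $a \in G$, $axa\inv$ is $(aga\inv,1)$-skew-primitive. If $aga\inv \neq g$, then rank one forces $axa\inv \in \kk(aga\inv - 1) \subset \kk G$, contradicting $x \notin \kk G$; hence $g \in \cnt(G)$. Then $axa\inv \in P_{g,1}(H) = \kk(g-1) \oplus \kk x$, so write $axa\inv = \chi(a)x + \mu(a)(g-1)$. Applying conjugation by $ab$ in two ways shows $\chi \in \widehat G$; the obstruction $\mu$ is a cocycle that can be absorbed by replacing $x$ with $x + c(g-1)$ for a suitable $c \in \kk$, producing the relation $ax = \chi(a)xa$.

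Next I would compute $\Delta(x^m)$ via the $q$-binomial theorem. With $\lambda = \chi(g)$, so $gx = \lambda xg$, the elements $g \otimes x$ and $x \otimes 1$ in $H \otimes H$ satisfy $(g \otimes x)(x \otimes 1) = \lambda (x \otimes 1)(g \otimes x)$, hence
\[ \Delta(x^m) = \sum_{k=0}^{m} \qbinom{m}{k}_\lambda x^k g^{m-k} \otimes x^{m-k}. \]
Since $\lambda$ is a primitive $m$th root of unity, $\qbinom{m}{k}_\lambda = 0$ for $0 < k < m$, giving $\Delta(x^m) = g^m \otimes x^m + x^m \otimes 1$. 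Thus $x^m \in P_{g^m,1}(H)$, and rank one (together with $\chi^m = 1$, which rules out any $\chi$-weight $\lambda^k$ skew-primitive at $g^m$ other than the one coming from $g^m - 1$) forces $x^m = \gamma(g^m - 1)$ for some $\gamma \in \kk$. A PBW-type dimension count against the presentation $H(G,g,\chi,\gamma)$ then yields the claimed isomorphism.

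The main obstacle I expect is cleanly exploiting the rank-one hypothesis at \emph{all} grouplikes simultaneously: one needs $P_{h,1}(H) = \kk(h-1)$ for every $h \notin \{g\}$ (and in particular at $h = g^m$, unless $g^m = g$, which would force $m = 1$). This is really a statement about the infinitesimal braiding being one-dimensional, and extracting it requires care with the coradical filtration rather than with the generators alone. Once this structural input is in hand, the remaining arguments — the centrality of $g$, the cocycle triviality of $\mu$, and the computation of $x^m$ — fall out from skew-primitive bookkeeping and the $q$-binomial identity above.
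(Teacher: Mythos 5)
This theorem is quoted from Krop--Radford \cite{KR}, and the paper supplies no proof of its own, so the only meaningful comparison is with the argument in the cited source --- which your outline in fact tracks closely: centrality of $g$ from conjugating skew-primitives, diagonalization of the $G$-action on $P_{g,1}$ by absorbing the additive cocycle $\mu$ (a coboundary because $H^1(G,\kk_\chi)=0$ for $G$ finite in characteristic zero), and $\Delta(x^m)=g^m\otimes x^m+x^m\otimes 1$ from the vanishing of $\qbinom{m}{k}_\lambda$ for $0<k<m$ when $\lambda$ is a primitive $m$th root of unity. Your identification of the key structural input --- that $P_{h,1}(H)=\kk(h-1)$ for every grouplike $h$ other than $g$, which is what ``rank one'' must be unwound to mean via the coradical filtration --- is also the right one.

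Two caveats. First, the ``PBW-type dimension count'' silently assumes that $H$ is generated as an algebra by $G$ and $x$. In \cite{KR} this is built into the definition of rank (rank is only defined for pointed Hopf algebras generated by $H_1$); without that hypothesis this step is a hard theorem, not bookkeeping. You also need the left $\kk G$-independence of $1,x,\dots,x^{m-1}$, which follows from $x^i\in H_i\setminus H_{i-1}$ in the coradical filtration, together with the fact that the presented algebra $H(G,g,\chi,\gamma)$ genuinely has dimension $m\,|G|$ (e.g.\ by realizing it inside a smash product). Second, your parenthetical ``$\chi^m=1$'' is not automatic --- only $\chi(g)^m=1$ is --- but the weight comparison you intend goes through anyway: a nonzero $x$-component in $x^m$ would require $g^m=g$, which forces $\ord(g)\mid m-1$ while $m\mid\ord(g)$, impossible for $m\geq 2$. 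With these points made explicit, the sketch is correct and is essentially the original argument.
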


\subsection{Hopf actions}
We say that a Hopf algebra $H$ {\sf acts on} an algebra $A$ if $A$ is a left $H$-module algebra.
That is, $A$ is a left $H$-module with action $h\tensor a\mapsto h\cdot a$ such that 
\[
  h \cdot 1_A = \varepsilon(h)1_A \quad \text{and} \quad
  h \cdot (aa') = \sum (h_1 \cdot a)(h_2 \cdot a') \quad
  (\text{for all } h \in H \text{ and } a, a' \in A).
\]
The action is said to be {\sf inner faithful} if there is no nonzero Hopf ideal that annihilates $A$. For Taft algebras, this is equivalent to the existence of an element $a \in A$ such that $x \cdot a\neq 0$ \cite[Lemma 2.5]{KW}. We now explore the question of inner faithfulness for actions of the $B(G,\gu,\cu)$.

If $V$ denotes the $\kk$-span of $x_1, \ldots, x_\theta$, then the action of $G$ on $V$ given by $h \cdot x_i = \chi_i(h) x_i$ is faithful if and only if for each $h \neq 1$ in $G$, there exists $i$ such that $\chi_i(h) \neq 1$.  
Let $N = \{h \in G \mid \chi_i(h) = 1 \text{ for all } i\}$ denote the kernel of the action.
Note that the induced action of $G/N$ on $V$ is faithful, and that we can realize $\mc R(g_1, \ldots, g_\theta, \chi_1, \ldots, \chi_\theta)$ as a quantum linear space over $G/N$.
  
\begin{definition}
We say that $\mc R(g_1, \ldots, g_\theta, \chi_1, \ldots, \chi_\theta)$ is a \emph{faithful} quantum linear space over $G$ if the action of $G$ on $V$ is faithful.
\end{definition}

\begin{example}
(1) Let $T=T_n(\lambda,m,0)$. 
Note that $T$ is the bosonization $\mc R(g, \chi) \# \kk G $ where $G$ is the cyclic group of order $n$ with generator $g$, $\lambda$ is a primitive $m^{th}$ root of unity, and $\chi$ is defined by $\chi(g) = \lambda$.
Thus, $\mc R(g, \chi)$ in this case is a faithful quantum linear space over $\grp{g}$ if and only if $\chi(g^\ell) \neq 1$ whenever $0 < \ell < n$. Equivalently, $\lambda^\ell \neq 1$ for $0 < \ell < n$.
Since $m=\ord(\lambda)$ divides $n$, it follows that the action is faithful if and only if $m=n$, whence $T$ is a Taft algebra.

(2)
More generally, if $G$ is a finite abelian group and $\mc R(g, \chi)$ is a quantum linear space over $G$ of rank one, then it is easy to show that $\mc R(g, \chi)$ is faithful over $G$ if and only if $G$ is cyclic and $\chi$ is a generator of $\wh G$.
\end{example}

The inner faithfulness of actions of bosonizations of faithful quantum linear spaces depends only on the actions of the $x_i$.
To show this, we require the following facts.
          
\begin{lemma}[{\cite[Lemma~1.2]{Cl}}] \label{lem:primsinideals}
Let $H$ be a pointed Hopf algebra and $I$ a nonzero Hopf ideal of $H$. 
Then $I$ contains a nonzero element of $P_{g,1}(H)$ for some $g \in G(H)$. \qed 
\end{lemma}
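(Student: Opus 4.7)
My plan is to work in the associated graded Hopf algebra $\gr H = \bigoplus_{k \geq 0} H_k/H_{k-1}$, argue that the least degree $n$ with $\gr I_n \neq 0$ (where $\gr I$ is the induced graded Hopf ideal) satisfies $n \leq 1$, and then lift the resulting structure back to $H$. For any nonzero homogeneous $\bar x \in \gr I_n$, the bidegree $(i, n-i)$ component of $\Delta_{\gr H}(\bar x) \in \gr I \otimes \gr H + \gr H \otimes \gr I$ lies in $\gr I_i \otimes \gr H_{n-i} + \gr H_i \otimes \gr I_{n-i}$; for $0 < i < n$ this vanishes by minimality of $n$. Hence $\Delta_{\gr H}(\bar x)$ is concentrated in bidegrees $(0, n)$ and $(n, 0)$, placing $\bar x$ in the first coradical filtration level of $\gr H$; since $\bar x$ is homogeneous of degree $n$, this forces $n \leq 1$. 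Establishing this bound is the main obstacle of the proof.

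When $n = 0$, the intersection $I \cap H_0$ is a nonzero Hopf ideal of $\kk G(H)$, so by the correspondence between Hopf ideals of a group algebra and normal subgroups, some nontrivial $g \in G(H)$ satisfies $g - 1 \in I \cap P_{g, 1}(H)$.

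When $n = 1$, pick nonzero $\bar x \in \gr I_1$ and use the Taft--Wilson decomposition $\gr H_1 = \bigoplus_{(g, h)} \bar P_{g, h}(H)$, where $\bar P_{g, h} := P_{g, h}(H)/(P_{g, h}(H) \cap H_0)$, to write $\bar x = \sum_j \bar p_j$ with $\bar p_j \in \bar P_{g_j, h_j}$. The bidegree $(0, 1)$ component of $\Delta_{\gr H}(\bar x)$ is $\sum_j g_j \otimes \bar p_j$, and since $\gr I_0 = 0$ this lies in $\kk G \otimes \gr I_1$; organizing by the first grouplike index forces $\bar v_g := \sum_{j:\, g_j = g} \bar p_j \in \gr I_1$ for each $g$. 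Applying the analogous bidegree $(1, 0)$ analysis to each $\bar v_g$ further yields $\bar v_{g, h} := \sum_{j:\, (g_j, h_j) = (g, h)} \bar p_j \in \gr I_1 \cap \bar P_{g, h}$ for every pair $(g, h)$.

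Choose a nonzero $\bar v_{g, h}$ and lift it to $\tilde v \in I \cap H_1$; writing $\tilde v = v + h_0$ with $v \in P_{g, h}(H)$ and $h_0 \in H_0$, the identity $(I \otimes H + H \otimes I) \cap (H_0 \otimes H_0) = 0$ (a consequence of the injectivity of the map $H_0 \to H/I$ when $I \cap H_0 = 0$) forces the $H_0 \otimes H_0$ component of $\Delta(\tilde v)$ to vanish. A short calculation in $\kk G \otimes \kk G$ then shows $\tilde v \in P_{g, h}(H) \cap I$, and right-multiplying by $h^{-1}$ gives the desired element of $I \cap P_{gh^{-1}, 1}(H)$.
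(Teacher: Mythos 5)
Your argument is correct. There is nothing in the paper to compare it against: the lemma is quoted from \cite[Lemma~1.2]{Cl} with no proof given, and the standard argument there runs the same minimality induction directly on the coradical filtration of $H$ via Taft--Wilson, whereas you package it through the coradically graded Hopf algebra $\gr H$. That packaging is clean, but it silently invokes two nontrivial (standard) facts worth citing or checking: that $\gr I$ is indeed a graded coideal of $\gr H$ (this follows from $\Delta(I)\subseteq I\otimes H+H\otimes I$ together with the identity $\ker(f\otimes g)=\ker f\otimes V+U\otimes\ker g$ applied degreewise to the quotient map $H\to H/I$), and that $\gr H$ is coradically graded, which is exactly what converts ``$\bar x$ lies in the first coradical filtration level of $\gr H$'' into the degree bound $n\le 1$. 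The one step stated too loosely is the vanishing of the $H_0\otimes H_0$ component of $\Delta(\tilde v)$: knowing $(I\otimes H+H\otimes I)\cap(H_0\otimes H_0)=0$ does not by itself annihilate a \emph{component} of an element of $I\otimes H+H\otimes I$, since a subspace need not be homogeneous for a chosen direct sum decomposition. The fix is one line: because $I\cap H_0=0$ you may choose a complement $W$ of $H_0$ in $H$ with $I\subseteq W$; then $I\otimes H+H\otimes I\subseteq W\otimes H+H\otimes W$, which projects to zero in $H_0\otimes H_0$. With that adjustment the computation forcing $h_0\in\kk(g-h)$, hence $\tilde v\in P_{g,h}(H)\cap I$, goes through, as do the final translation by $h^{-1}$ into $P_{gh^{-1},1}(H)$ and the $n=0$ case via the normal subgroup $N=\{g\in G(H)\mid g-1\in I\}$.
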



  \begin{lemma}[{\cite[Corollary~5.3]{AS2}}] \label{lem:qlsprims}
    In the Hopf algebra $\mc R(g_1, \ldots, g_\theta, \chi_1, \ldots, \chi_\theta) \# \kk G$, we have
    \[
      P_{g,1} = \kk(1-g) \oplus \left( \bigoplus_{i: g_i = g} \kk x_i \right).
    \]
  \end{lemma}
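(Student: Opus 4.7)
The plan is to prove $\supseteq$ by inspection and $\subseteq$ by decomposing $P_{g,1}$ along the $\NN$-grading $\deg(g)=0$, $\deg(x_i)=1$ on $H$ and ruling out anything outside degrees $0$ and $1$. For $\supseteq$: $\Delta(g)=g\otimes g$ gives $\Delta(1-g)=g\otimes(1-g)+(1-g)\otimes 1$, and $\Delta(x_i)=g_i\otimes x_i+x_i\otimes 1$ with $g_i=g$ is itself the defining condition for $P_{g,1}$; directness of the sum is immediate from the PBW basis of $H$.

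For $\subseteq$, the grading above is also a coalgebra grading ($\Delta(x_i)$ is bidegree-homogeneous of total degree one), and because both $g$ and $1$ lie in degree zero, the skew-primitive condition is homogeneous, so $P_{g,1}$ splits as $\bigoplus_k P_{g,1}^{(k)}$. In degree $0$, writing $c=\sum_h c_h h$ and matching $\Delta(c)=\sum c_h h\otimes h$ against $g\otimes c+c\otimes 1$ in the basis $\{u\otimes v : u,v\in G\}$ forces $c_h=0$ for $h\notin\{1,g\}$ and $c_1+c_g=0$, so $P_{g,1}^{(0)}=\kk(1-g)$. In degree $1$, writing $z=\sum_{i,h}d_{i,h}x_i h$ and using $\Delta(x_i h)=g_i h\otimes x_i h+x_i h\otimes h$, the tensors of type $(\cdot)\otimes(x_i\cdot)$ force $g_i h=g$ while those of type $(x_i\cdot)\otimes(\cdot)$ force $h=1$; hence $P_{g,1}^{(1)}=\bigoplus_{g_i=g}\kk x_i$.

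For $k\geq 2$, I would expand $\Delta$ on each PBW monomial $x^a h$ (where $x^a:=x_1^{a_1}\cdots x_\theta^{a_\theta}$) using the $q$-binomial formula $\Delta(x_i^{a_i})=\sum_{b_i=0}^{a_i}\binom{a_i}{b_i}_{q_i}x_i^{a_i-b_i}g_i^{b_i}\otimes x_i^{b_i}$ with $q_i=\chi_i(g_i)$ (legitimate since the two summands of $\Delta(x_i)$ $q$-commute), then rearrange via the relations $x_i x_j=\chi_j(g_i)x_j x_i$ and $h x_i=\chi_i(h)x_i h$ to reach the PBW basis of $H\otimes H$. Because $g\otimes z+z\otimes 1$ occupies only bidegrees $(0,\ast)$ and $(\ast,0)$, every bidegree $(1,k-1)$ component of $\Delta(z)$ must vanish; for each $a$ with $|a|=k$ and each $j$ with $a_j\geq 1$, the corresponding basis element $(x_j\cdot\text{group})\otimes(x^{a-e_j}h)$ receives the contribution $c_{a,h}\cdot[a_j]_{q_j}\cdot(\text{character values})$, and each such $c_{a,h}$ is thereby isolated and forced to zero.

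The main obstacle is verifying the nonvanishing of all scalars at the $k\geq 2$ step: the $q$-binomials $\binom{a_i}{b_i}_{q_i}$ are nonzero since $0\leq b_i\leq a_i<m_i$ and $q_i$ has exact order $m_i$, and the products of character values coming from the rearrangement are nonzero because each $\chi_i$ takes values in $\kk^\times$. The exact-order hypothesis on $\chi_i(g_i)$ is precisely what makes the isolation argument work; once this is in hand, the rest is bookkeeping in the PBW basis.
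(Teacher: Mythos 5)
Your argument is correct, and it necessarily takes a different route from the paper, because the paper gives no proof at all: the statement is simply quoted from \cite{AS2}, where it rests on the structure theory of coradically graded pointed Hopf algebras (skew-primitives of such a Hopf algebra live in degrees $0$ and $1$ of the coradical grading, where they can be read off). Your proof replaces that machinery with a self-contained computation. The grading $\deg(h)=0$, $\deg(x_i)=1$ is indeed a coalgebra grading (it is in fact the coradical grading, though you never need to know this), so $P_{g,1}$ splits over it; your degree-$0$ and degree-$1$ coefficient matchings correctly produce $\kk(1-g)$ and $\bigoplus_{i:g_i=g}\kk x_i$; and in degree $k\ge 2$ the vanishing of the bidegree-$(1,k-1)$ component of $\Delta(z)$ kills every PBW coefficient, since the relevant scalar is a product of character values with the Gaussian binomial $\qbinom{a_j}{a_j-1}_{q_j}=1+q_j+\cdots+q_j^{a_j-1}$, which is nonzero precisely because $1\le a_j<m_j=\ord(\chi_j(g_j))$. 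That last step is, in effect, a hands-on verification that the quantum linear space has no primitives above degree one --- the content hidden in the citation. Two points should be made explicit if you write this up: (i) you are using the PBW basis $\{x_1^{a_1}\cdots x_\theta^{a_\theta}h \mid 0\le a_i<m_i,\ h\in G\}$ of $H$ (and the induced basis of $H\otimes H$); and (ii) for fixed $j$, at most one monomial $x^a h$ contributes to a given bidegree-$(1,k-1)$ basis element of the form $(x_j\cdot\mathrm{grp})\otimes(x^{a-e_j}\cdot\mathrm{grp})$, and contributions from distinct $j$ land on linearly independent first tensor factors, so no cancellation can rescue a nonzero coefficient. Both are routine. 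What the citation buys is brevity and generality; what your argument buys is independence from the Nichols-algebra formalism and an explicit view of exactly where the hypothesis $\ord(\chi_i(g_i))=m_i\geq 2$ enters.
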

  
  \vspace{-.2in}
  
  \qed

  \begin{proposition}
  \label{prop.if}
    Let $\mc R(g_1, \ldots, g_\theta, \chi_1, \ldots, \chi_\theta)$ be a faithful quantum linear space over a finite abelian group $G$.
    Also, assume that for any $g \in G$ satisfying $\nu_g \geq 2$, we have $m_i \neq 2$ (i.e. $\chi_i(g_i) \neq -1$) for all $i$ such that $g_i = g$.
    Then an action of $H \defeq \mc R \# \kk G$ on some algebra $A$ is inner faithful if and only if each $x_i$ acts by nonzero.
  \end{proposition}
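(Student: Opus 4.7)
The plan is to prove both directions of the biconditional; the forward direction is formal, while the reverse uses the two structural lemmas above together with the quantum linear space hypotheses. For the forward direction, suppose some $x_i$ acts as zero on $A$. The two-sided ideal $J := Hx_iH$ annihilates $A$ since $(ax_ib)\cdot c = a\cdot(x_i\cdot(b\cdot c)) = 0$, and it is a Hopf ideal: $\varepsilon(x_i) = 0$, $\Delta(x_i) = g_i\otimes x_i + x_i\otimes 1 \in J\otimes H + H\otimes J$, and $S(x_i) = -g_i\inv x_i \in J$. Since $x_i \neq 0$ in $H$, $J$ is a nonzero Hopf ideal killing $A$, contradicting inner faithfulness.

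For the reverse direction I argue contrapositively. Suppose $I \subseteq H$ is a nonzero Hopf ideal with $I \cdot A = 0$; I will produce some $x_k \in I$. By Lemmas \ref{lem:primsinideals} and \ref{lem:qlsprims}, $I$ contains a nonzero element of the form
\[
  y = \alpha(1-g) + \sum_{i:\, g_i = g} \beta_i x_i, \qquad \alpha,\beta_i \in \kk,
\]
for some $g \in G$. Since $G$ is abelian and $hx_ih\inv = \chi_i(h)x_i$, we obtain $hyh\inv - y = \sum_i \beta_i(\chi_i(h)-1)x_i \in I$ for every $h\in G$. If every $\beta_i$ is zero then $1-g\in I$ with $g\neq 1$, so faithfulness of the quantum linear space yields some index $k$ with $\chi_k(g) \neq 1$; combining $(1-g)x_k,\, x_k(1-g)\in I$ with $gx_k = \chi_k(g)x_kg$ gives $(1-\chi_k(g))x_k \in I$, hence $x_k \in I$. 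If some $\beta_i \neq 0$ and $\nu_g = 1$, choosing $h$ with $\chi_i(h)\neq 1$ (which exists since $m_i \geq 2$ forces $\chi_i \neq 1$) yields $x_i \in I$ directly.

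The main obstacle, and the exact point at which the hypothesis $m_i \neq 2$ is used, is the case $\nu_g \geq 2$. The compatibility $\chi_i(g_j) = \chi_j(g_i)\inv$ applied to indices with $g_i = g_j = g$ reads $\chi_i(g)\chi_j(g) = 1$; iterating this over any three such indices forces some $\chi_i(g)\in\{1,-1\}$, contradicting $m_i\geq 2$ or $m_i \neq 2$. Thus $\nu_g = 2$, say for indices $i,j$, and the eigenvalues $\lambda_i = \chi_i(g),\,\lambda_j = \chi_j(g)$ satisfy $\lambda_i\lambda_j = 1$ with $\lambda_i,\lambda_j\notin\{1,-1\}$, which forces $\lambda_i\neq\lambda_j$. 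Setting $z := y - gyg\inv = \beta_i(1-\lambda_i)x_i + \beta_j(1-\lambda_j)x_j \in I$, one also has $gzg\inv = \beta_i\lambda_i(1-\lambda_i)x_i + \beta_j\lambda_j(1-\lambda_j)x_j \in I$, and the Vandermonde-type matrix $\begin{pmatrix}1 & 1 \\ \lambda_i & \lambda_j\end{pmatrix}$ is invertible, so each term $\beta_k(1-\lambda_k)x_k$ lies in $I$ separately; since $\lambda_k \neq 1$, any nonzero $\beta_k$ delivers $x_k\in I$ and completes the proof.
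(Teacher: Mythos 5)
Your proof is correct, and although it rests on the same two lemmas and the same decomposition of $P_{g,1}$ as the paper's argument, the way you resolve the nontrivial cases is genuinely different. The paper passes to the quotient $H/I$ and exploits the algebra relations there: when the skew-primitive in $I$ has the form $1-g+\sum\alpha_i x_i$ it derives $\overline{x_1}^{\,2}=-\alpha_1^{-1}\overline{x_1}$ and then invokes the nilpotency relation $x_1^{m}=0$ to force $\overline{x_1}=0$; when it is a combination $x_1-\alpha x_2$ it chooses $h$ with $\chi_1(h)\neq\chi_2(h)$ and concludes from $\overline{x_2}\,\overline{h}=0$. You instead stay inside $I$ and use conjugation by group elements as a uniform projection onto the $x_i$-components: $y-hyh^{-1}$ kills the $\kk(1-g)$ summand (here the abelianness of $G$ is what makes $h(1-g)h^{-1}=1-g$), and when $\nu_g=2$ the Vandermonde matrix in $\lambda_i,\lambda_j$ — invertible precisely because $\lambda_i\lambda_j=1$ together with $m_i,m_j\notin\{1,2\}$ forces $\lambda_i\neq\lambda_j$ — separates the two remaining components. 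This buys a cleaner, more uniform treatment of the paper's second and third cases at once, and notably it never uses the relation $x_i^{m_i}=0$, so the argument would carry over verbatim to non-nilpotent rank-one variants such as the $H(G,g,\chi,\gamma)$ appearing later in the paper. Your derivation of $\nu_g\leq 2$ and your appeal to the hypothesis $m_i\neq 2$ land in exactly the same places as the paper's, and your forward direction is the same as theirs with the Hopf-ideal verification written out.
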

  
    \begin{proof}
      First, if some $x_i$ acts by zero, then the (Hopf) ideal generated by $x_i$ gives a nonzero Hopf ideal which acts by zero, so the action is not inner faithful.
      
      On the other hand, suppose the action is not inner faithful, and let $I$ denote a nonzero Hopf ideal which acts by zero.
      Let $\ol g$ and $\ol x_i$ denote the generators of $H / I$.
      By Lemma~\ref{lem:primsinideals}, there is some nonzero $a \in P_{g,1}(H) \cap I$ for some $g \in G(H)$.
      Suppose $a \in \kk (1-g)$, so $\ol g = 1$ in $H/I$. 
      Then for each $x_i$, we have 
      \[
        \ol x_i 
          = \ol g \ol x_i 
          = \chi_i(g) \ol x_i \ol g
          = \chi_i(g) \ol x_i.
      \]
      Since $\mc R$ is a faithful quantum linear space over $G$, we must have that $\chi_i(g) \neq 1$ for some $i$, and thus, $x_i \in I$.
      Hence, $x_i$ acts by zero.
      
      Now suppose that $a \notin \kk (1-g)$, so $\nu_g \geq 1$.
      Then by Lemma~\ref{lem:qlsprims}, we have two cases to check: $a = 1-g + \sum_{i: g_i = g} \alpha_i x_i$ or $a = \sum_{i: g_i = g} \alpha_i x_i$.
      We assume the former first.
      Since $\nu_g \geq 1$, we have $\alpha_i \neq 0$ for at least one $i$; without loss of generality, this is 1.
      Thus, in $I$, we have $\ol g = 1 + \sum_{i: g_i = g} \alpha_i \ol x_i$.
      Therefore, we have $\ol g \ol x_1 = \chi_1(g) \ol x_1 \ol g$, i.e. that
      \[
        \ol x_1 + \sum_{i: g_i = g} \alpha_i \ol x_i \ol x_1
          = \left(1 + \sum_{i: g_i = g} \alpha_i \ol x_i \right) \ol x_1 
          = \chi_1(g) \ol x_1 \left(1 + \sum_{i: g_i = g} \alpha_i \ol x_i \right)
          = \chi_1(g) \ol x_1 + \chi_1(g) \sum_{i: g_i = g} \alpha_i \ol x_1 \ol x_i.
      \]
      Using the fact that $x_i x_1 = \chi_1(g) x_1 x_i$ for all $i \neq 1$ such that $g_i = g$, we have
      \[
        \ol x_1 + \alpha_1 \ol x_1^2 = \chi_1(g) \ol x_1 + \chi_1(g) \alpha_1 \ol x_1^2.
      \]
      Since $\chi_1(g) \neq 1$, we have $\ol x_1 ^2 = - \alpha_1^{-1} \ol x_1$.
      Inductively, $\ol x_1^{m} = (- \alpha_1^{-1})^{m-1} \ol x_1$.
      Hence, $\ol x_1 = \ol 0$, or $x_1 \in I$.

      For the second case, we note first that $\nu_g \leq 2$.
      Otherwise, we have $\chi_1(g_1) = \chi_1(g_2) = \chi_2(g_1)^{-1} = \chi_2(g_2)^{-1}$, and similarly that $\chi_2(g_2) = \chi_3(g_3)^{-1}$ and $\chi_3(g_3) = \chi_1(g_1)^{-1}.$
      Therefore, for each $g_i = g$, we have $\chi_i(g_i) = -1$, contrary to our hypothesis. 
      Thus, we may assume $x_1 - \alpha x_2 \in I$ with $g_1 = g_2$ and $\alpha \neq 0$.
      In $H/I$, we have $\ol x_1 = \alpha \ol x_2$.
      For every $h \in G$, the relation $\ol h \ol x_1 = \chi_1(h) \ol x_1 \ol h$ yields 
      \[
        \chi_2(h) \alpha \ol x_2 \ol h 
        = \alpha \ol h \ol x_2 
        = \chi_1(h) \alpha \ol x_2 \ol h.
      \]
      If $\chi_1 = \chi_2$, then $\chi_1(g_1) = \chi_2(g_2) = \chi_1(g_1)^{-1}$, so $\chi_1(g_1) = -1$, contrary to our hypothesis.
      Therefore, for some $h \in G$, $\chi_1(h) \neq \chi_2(h)$, and so $\ol x_2 \ol h = 0$. 
      Since $\ol h$ is a unit, $\ol x_2 = 0$, or $x_2 \in I$.
    \end{proof}
    
By the above, we can always replace $G$ by a quotient so that $\mc R$ is faithful, in which case Proposition~\ref{prop.if} applies. 
For this reason, when dealing with actions of $B(G,\gu,\cu)$, we often assume merely that each $x_i$ acts by nonzero, rather than the more strict assumption that the action be inner faithful.

Whenever some $B(G,\gu,\cu)$ acts on an affine connected graded algebra $A$ that is generated in degree 1, we assume that actions are {\it linear}, that is $g_i \cdot A_{(1)}, x_i \cdot A_{(1)} \subset A_{(1)}$.
This means that, by an abuse of notation, we can represent the $g_i$ and $x_i$ as matrices, which we do throughout. We say that $g_i$ acts {\sf diagonally} on $A$ if $g_i$ is represented by a diagonal matrix.
As each $g_i$ is a grouplike in $B$, then it necessarily acts as an automorphism on $A$.

The next result, though simple, will be of great assistance in all of our classifications.

\begin{lemma}
\label{lem.gxcom}
Suppose $H(G,g,\chi,\gamma)$ acts linearly and inner faithfully on a connected graded affine algebra $A$, which is generated by $A_{(1)}=\Span_\kk\{u_1,\hdots,u_t\}$.
Assume that $g$ acts diagonally on $A_{(1)}$ and $m\geq 3$.
Also, assume the action of $x$ is linear with the action on the basis $(u_1, \ldots, u_t)$ of $A_{(1)}$ given by the matrix $(\eta_{ij})$.
Then for all $i,j$, $\eta_{ij}\eta_{ji}=0$. In particular, for all $k$, $\eta_{kk}=0$.
\end{lemma}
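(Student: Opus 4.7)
The plan is to exploit the single Hopf-algebra relation $gx = \chi(g) x g$, combined with the diagonal action of $g$, to produce strong constraints on the matrix entries $\eta_{ij}$. Write $\lambda = \chi(g)$, and since $g$ is grouplike it acts as an automorphism of $A$; combined with the diagonal assumption this gives $g \cdot u_i = \alpha_i u_i$ for some $\alpha_i \in \kk^\times$. Using the convention $x \cdot u_j = \sum_i \eta_{ij} u_i$, I would compute $g \cdot (x \cdot u_j)$ in two ways: directly, it equals $\sum_i \eta_{ij} \alpha_i u_i$, while applying the relation $gx = \lambda xg$ gives $\lambda \alpha_j \sum_i \eta_{ij} u_i$.

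Comparing coefficients of $u_i$ yields the key identity
\[
\eta_{ij}\bigl(\alpha_i - \lambda \alpha_j\bigr) = 0 \qquad \text{for all } i,j.
\]
So whenever $\eta_{ij} \neq 0$ we must have $\alpha_i = \lambda \alpha_j$. Now suppose for contradiction that $\eta_{ij}\eta_{ji} \neq 0$ for some pair $i,j$. Applying the identity to both $(i,j)$ and $(j,i)$ gives $\alpha_i = \lambda \alpha_j$ and $\alpha_j = \lambda \alpha_i$, so $\alpha_i = \lambda^2 \alpha_i$. Since $\alpha_i \neq 0$, this forces $\lambda^2 = 1$, contradicting the hypothesis $m = \ord(\lambda) \geq 3$. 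Hence $\eta_{ij}\eta_{ji} = 0$ for all $i,j$. Specializing to $i = j$ gives $\eta_{kk}^2 = 0$, hence $\eta_{kk} = 0$.

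There is no real obstacle here; the argument is a bookkeeping of the commutation relation against the eigenbasis of $g$. The only mild subtlety is to observe that the parameter $\gamma$ plays no role, since the relation $gx = \lambda xg$ is part of $H(G,g,\chi,\gamma)$ regardless of $\gamma$, so inner faithfulness is not actually needed in the argument beyond ensuring that the relations of $H$ hold in $\End_\kk(A)$; it will, however, be needed in applications of this lemma to rule out the trivial case $x = 0$.
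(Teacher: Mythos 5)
Your proposal is correct and follows essentially the same route as the paper: both derive the key identity $\eta_{ij}(\alpha_i-\lambda\alpha_j)=0$ (the paper's equation \eqref{eq.coeff}) from the relation $gx=\lambda xg$ applied to the eigenbasis of $g$, and then conclude $\lambda^2=1$ if some $\eta_{ij}\eta_{ji}\neq 0$, contradicting $m\geq 3$. Your closing remark that inner faithfulness and $\gamma$ play no role here is also accurate.
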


\begin{proof}
Set $\lambda=\chi(g)$. Let $\alpha_i \in \kk^\times$ be defined by $g \cdot u_i = \alpha_i u_i$.
A computation shows that the coefficient for $u_i$ in $(gx-\lambda xg) \cdot u_k$ is 
\begin{align}
\label{eq.coeff}
\eta_{ik}(\alpha_i-\lambda \alpha_k).
\end{align}
It follows that $0=\eta_{kk}\alpha_k(1-\lambda)$, so $\eta_{kk}=0$.
Furthermore, if for some $i\neq j$, $\eta_{ij}$ and $\eta_{ji}$ are nonzero, then
we have $\alpha_i=\lambda \alpha_j = \lambda^2 \alpha_i$, implying $\ord(\lambda) \leq 2$, 
contradicting our standing hypotheses.
\end{proof}

\subsection{Results}
Suppose a generalized Taft algebra $T=T_n(\lambda,m,0)$ acts on a quantum affine space $A=\kk_\bp[u_1,\hdots,u_t]$, $t \geq 3$.
We say the action of $T$ is a {\sf trivial extension} of the action on $A_I$ if $x \cdot u_j=0$ for all $j \notin I$.
We show in Theorem \ref{thm.nspace} that every action of a generalized Taft algebra on a quantum affine space is a trivial extension of an action on a quantum plane subalgebra, given in Proposition \ref{prop.genaction}, or a certain quantum 3-space subalgebra. This is then applied to prove the following.

\begin{theorem*}[Theorem \ref{thm.QLSrank}]
Suppose $B=B(G,\gu,\cu)$ has rank $\theta$, and that $B$ acts linearly and inner faithfully on $A=\kk_\bp[u_1,\hdots,u_t]$, $t \geq 2$.
Assume $m_i$ for all $i$ and $\ord(p_{ij})$ for all $i \neq j$ are at least 3. Then $\theta \leq 2(t-1)$.
\end{theorem*}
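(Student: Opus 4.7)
The plan is to apply Theorem~\ref{thm.nspace} to each rank-one subalgebra $B_i \cong T_{n_i}(\lambda_i,m_i,0)$ of $B$, and then control the total rank using the braided commutation structure of $B$. By Proposition~\ref{prop.if}, whose hypothesis that $m_i \neq 2$ is met since $m_i \geq 3$ for all $i$, inner faithfulness is equivalent to every $x_i$ acting nontrivially on $A$. Theorem~\ref{thm.nspace} then implies that each $x_i$ acts as a trivial extension of an action on a subalgebra $A_{I_i}$ with $|I_i| \in \{2,3\}$, and in the 2-element case Lemma~\ref{lem.gxcom} shows $x_i$ behaves as a \emph{directed arrow} between its two supporting generators: it sends one to a scalar multiple of the other and kills the latter.

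The next step is to extract the eigenvalues of each $g_i$ on $A_{(1)}$ by combining the module-algebra axiom (so that $g_i$ preserves the defining relations of $A$) with the Hopf relation $g_i x_i = \lambda_i x_i g_i$. Once the support $I_i$ and direction of $x_i$ are fixed, the eigenvalues of $g_i$ on all of $A_{(1)}$ are determined by $\lambda_i$ and entries of $\bp$. Substituting these into the QLS compatibility $\chi_i(g_j)\chi_j(g_i) = 1$ converts it into a multiplicative equation in $\lambda_i, \lambda_j$, and the $p_{k\ell}$. A case analysis using $m_i, \ord(p_{k\ell}) \geq 3$ should then establish: (i) no $x_i$ with support $\{k,\ell\}$ directed $k \to \ell$ can coexist with $x_j$ of support $\{\ell,m\}$ directed $\ell \to m$ (nor the reversed incompatible orientations), since the braiding $x_ix_j = \chi_j(g_i)x_jx_i$ would have to equate a nonzero operator on $A$ to zero; (ii) two $x_i$'s on a common edge must share the same direction; and (iii) if two $x_i$'s have supports sharing a vertex, then $\lambda_i\lambda_j = 1$.

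The main obstacle is to turn these local constraints into the claimed global bound. Applying (iii) to three arrows at a common vertex yields $\lambda\lambda' = \lambda\lambda'' = \lambda'\lambda'' = 1$, forcing $\lambda^2 = 1$ and contradicting $m_i \geq 3$; so each vertex of the underlying directed graph has total degree at most $2$. Combined with (i)--(ii), the graph decomposes into a disjoint union of bipartite paths, even cycles, and isolated doubled edges (of multiplicity at most $2$), and a component-by-component vertex count bounds the total number of arrows. Incorporating the quantum 3-space supports --- which couple three vertices at once and therefore require a separate but analogous commutation analysis within $A_{I_i}$ --- without breaking this count is the most delicate step; once it is carried out, the resulting bound is $\theta \leq 2(t-1)$.
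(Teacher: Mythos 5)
Your overall strategy (reduce to Theorem \ref{thm.nspace}, encode the skew-primitive actions as arrows between generators, and bound the rank via local compatibility constraints) matches the paper's, but your step (iii) contains a genuine error that derails the count. The relation $\lambda_i\lambda_j=1$ (Lemma \ref{lem.genpatch}(2)) holds only when $x_i$ and $x_j$ both act nontrivially on the \emph{same generator}, i.e.\ when the two arrows share their \emph{source}; it does not hold whenever the supports merely share a vertex. Arrows sharing only a \emph{target} impose no such relation --- there the QLS condition $\chi_i(g_j)\chi_j(g_i)=1$ reduces to an identity among the $p_{k\ell}$ with no constraint on $\lambda_i,\lambda_j$. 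Consequently your conclusion that every vertex has total degree at most $2$, and the resulting decomposition into bipartite paths, even cycles, and doubled edges, is false: it would yield $\theta\le t$, which is contradicted for $t\ge 3$ by the extremal example following Theorem \ref{thm.QLSrank}, where all $2(t-1)$ arrows point \emph{into} the single vertex $u_1$. The correct local facts are: each vertex is the \emph{source} of at most two arrows (Lemma \ref{lem.genpatch}(3), giving at most $2t$ arrows), there are no $2$-cycles, and a head-to-tail configuration is not impossible (as your (i) suggests for plane-type actions) but instead forces \emph{both} skew-primitives involved to act as trivial extensions on a quantum $3$-space $A_{ijk}$, so that the two arrows of such a path are carried by a single $x_k$ and overcount the rank.

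Relatedly, you defer the quantum $3$-space case as ``the most delicate step'' without carrying it out, yet that case is precisely where the bound improves from $2t$ to $2(t-1)$: if the arrow count is $2t$ or $2t-1$, every target vertex is also a source of two (resp.\ at least one) arrows, producing length-two paths, and each such path collapses two arrows into one skew-primitive. Until (iii) is corrected to a source-sharing statement and this $3$-space bookkeeping is actually performed, the proposal does not establish the bound.
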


In Propositions \ref{prop:matrixactions} and \ref{prop:matrixactions2},  we completely classify actions of generalized Taft algebras on quantum matrix algebras under mild hypotheses. In general, the action of $x$ in this case corresponds to shifting a row or column. From this, we achieve the following result.

\begin{theorem*}[Theorems \ref{thm.mat2} and \ref{thm.matn}]
Let $q \in \kk^\times$ with $q \neq \pm 1$. Also, let $B(G,\gu, \cu)$ be a bosonization of rank $\theta$ with $m_i \geq 3$ for all $i$. Suppose $B(G,\gu, \cu)$  acts on $\mc O_q( M_N(\kk))$ with each $g_i$ acting as an element of $(\kk^\times)^{2n-1} \rtimes \langle \tau \rangle$ and each $x_i$ acting linearly and nonzero. Then, 
\[ \theta \leq \begin{cases}
    3 & \text{if $N=2$} \\
    2(N-1) & \text{if $N\geq 3$}.
    \end{cases}\]
\end{theorem*}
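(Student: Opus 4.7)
The plan is to bootstrap from the single-generator classifications. By hypothesis every $B_i := \grp{g_i,x_i} \iso T_{n_i}(\lambda_i,m_i,0)$ acts on $\cO_q(M_N(\kk))$ with $g_i \in (\kk^\times)^{2N-1} \rtimes \grp{\tau}$ and $x_i \cdot (-) \neq 0$; since each $m_i\geq 3$, the action is inner faithful by \cite[Lemma 2.5]{KW}, so Propositions \ref{prop:matrixactions} and \ref{prop:matrixactions2} apply directly to each $B_i$. Those results show that, up to composition with the transpose $\tau$, the action of $x_i$ on $\cO_q(M_N(\kk))$ is of \emph{shift type}: either a \emph{row shift} $R_k$ (sending row $k+1$ to a scalar multiple of row $k$ of the matrix of generators $(Y_{ab})$ and annihilating the other rows) or a \emph{column shift} $C_k$ (defined analogously), for some $k \in \{1,\dots,N-1\}$. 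Moreover, the classification pins down the diagonal character of $g_i$ (relative to this shift type) up to a one-parameter family.

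For $N \geq 3$, the remaining task is to show that at most $2(N-1)$ of these shift actions can be mutually compatible with the bosonization relations $x_i x_j = \chi_j(g_i) x_j x_i$. My plan is to check all pairs directly: compute $x_i x_j$ and $x_j x_i$ on each basis element $Y_{ab}$ for each pair from $\{R_1,\dots,R_{N-1},C_1,\dots,C_{N-1}\}$ (and their $\tau$-twists). A row shift followed by a column shift produces a double-shift on precisely one generator, and the commutation relation forces the scalar $\chi_j(g_i)$ to be determined exactly by the diagonal character of $g_i$ from the classification; this is consistent, so all row/column pairs coexist. On the other hand, two row shifts $R_k$ and $R_{k'}$ with $k \neq k'$ give $R_k R_{k'} = 0 = R_{k'} R_k$ only when $|k-k'|>1$, and when $|k-k'|=1$ a similar direct check shows the relation forces each $x_i$ to be represented more than once in the family — so the $R_k$ are already indexed by a single copy. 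Collecting these constraints gives the bound $\theta\leq 2(N-1)$. The transpose $\tau$ exchanges $R_k \leftrightarrow C_k$, so a $\tau$-twisted shift duplicates an untwisted one and cannot increase the count.

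The case $N=2$ is handled separately because the row/column count $2(N-1)=2$ is misleadingly small: with only one row shift and one column shift available, the $\tau$-twist can produce an independent third compatible action on $\cO_q(M_2(\kk))=\grp{A,B,C,D}$. In this low-dimensional setting, the classification in Proposition \ref{prop:matrixactions} enumerates all candidate shifts for $x_i$ on $\{A,B,C,D\}$, and a direct enumeration of which triples satisfy all the pairwise relations $x_i x_j = \chi_j(g_i) x_j x_i$ yields at most three; no compatible quadruple exists because the fourth shift would force $\chi_j(g_i) = \pm 1$ on some pair, contradicting the relation or forcing $m_i \leq 2$.

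The main obstacle is the compatibility bookkeeping: for each unordered pair of shift actions one must compare $x_i x_j$ and $x_j x_i$ on a short list of surviving generators, and then check that the required proportionality constant matches the eigenvalue $\chi_j(g_i)$ determined by the classification of $g_i$ on $\cO_q(M_N(\kk))$. This is a finite but intricate case analysis — the reward is that once each pair is classified as compatible or incompatible, the two bounds $\theta \leq 3$ (for $N=2$) and $\theta \leq 2(N-1)$ (for $N \geq 3$) emerge by reading off the maximum clique in the compatibility graph.
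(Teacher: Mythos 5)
Your overall strategy --- reduce to the rank-one classification of Propositions \ref{prop:matrixactions} and \ref{prop:matrixactions2}, record which pairs of rank-one actions are compatible with the relations $x_ix_j=\chi_j(g_i)x_jx_i$ and $g_ix_j = \chi_j(g_i)x_jg_i$, and bound $\theta$ by the largest pairwise-compatible family --- is exactly the paper's. But there is a genuine gap in your enumeration of the candidate actions, and it propagates into the count. Proposition \ref{prop:matrixactions2} does \emph{not} say that every $x_i$ acts as a row or column shift up to $\tau$: Table \ref{table.matn} contains four additional \emph{corner} actions ($x\cdot Y_{11}=\delta Y_{1N}$, $x\cdot Y_{11}=\delta Y_{N1}$, $x\cdot Y_{NN}=\delta Y_{1N}$, $x\cdot Y_{NN}=\delta Y_{N1}$), and these are not $\tau$-twists of shifts; likewise for $N=2$ the actions $x\cdot A=\delta D$, $x\cdot D=\delta A$, and $x\cdot A=\delta B+\epsilon C$ are not shifts. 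Moreover each shift comes in two orientations --- a column can be shifted left ($Y_{a,b}\mapsto Y_{a,b-1}$, with $\lambda=q^2$) or right ($Y_{a,b}\mapsto Y_{a,b+1}$, with $\lambda=q^{-2}$), and these are genuinely distinct actions with distinct $g$'s --- so the pool of candidates is $4(N-1)+4$ types, not $2(N-1)$. Your assertion that ``the $R_k$ are already indexed by a single copy'' therefore does not yield the bound: the paper's maximal families of size $2N-2$ mix left- and right-shifts and up- and down-shifts simultaneously (e.g.\ for $N$ even: left-shifts at columns $2,4,\dots,N$, right-shifts at columns $2,4,\dots,N-2$, and similarly for rows), and one of the extremal configurations even includes two corner actions. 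The bound $2N-2$ only emerges after a parity-sensitive count of admissible shift positions, split into four cases according to which corner actions are present, using the constraints that same-type shifts must have positions at least $2$ apart and that a left-shift at column $b$ excludes a right-shift at column $b-1$.

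For $N=2$ your sketch is closer in spirit, but two details are off: the third action in the rank-$3$ example is the corner action $x\cdot D=\delta A$ (row 8 of Table \ref{table.mat2}), not a $\tau$-twist of a shift; and the exclusion of rank $4$ in the paper does not come from forcing $\chi_j(g_i)=\pm1$ but from the observation that the compatibility graph on the eight action types has exactly two $K_4$'s (related by Remark \ref{rem:iso}), and in the one with vertices $(1),(2),(6),(8)$ the compatibility conditions force $\delta=\epsilon=0$ in action $(6)$, i.e.\ that $x_i$ acts by zero. To repair your argument you would need to (i) include the corner and mixed actions in your list of candidates, (ii) distinguish the two orientations of each shift, and (iii) carry out the full pairwise compatibility computation --- the ``finite but intricate case analysis'' you defer is in fact where all the content of the theorem lives.
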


In Section \ref{sec.add}, we consider some peripheral results.
First, we consider invariants of generalized Taft actions on quantum planes.
Results for actions on $\kk_\bp[u_1,\hdots,u_t]$ and $\cO_q(M_N(\kk))$ are used to study actions and obtain bounds on quantum exterior algebras (Proposition \ref{prop.exterior}), quantum Weyl algebras (Proposition \ref{prop.weyl}),  as well as $\qgl$ and $\qsl$ (Proposition \ref{prop.slgl}).
We propose a number of extensions to this work and additional questions in Section \ref{sec.ques}.

\section{Quantum affine spaces}

In this section, we primarily consider actions on quantum affine spaces, and discuss the first quantum Weyl algebra.
As a warm-up, we consider the algebra $A= \kk\langle u,v \mid uv-\mu vu - \kappa\rangle$ and $\ord(\mu)=k>1$.
Then $A=\qp$ when $\kappa = 0$. When $\kappa \neq 0$, $A \iso \wa$, a {\sf quantum Weyl algebra}.
The following result is a generalization of \cite[Proposition 2.1]{GWY} to the case of generalized Taft algebra actions on quantum planes and quantum Weyl algebras.

\begin{proposition}
\label{prop.genaction}
Let $A=\qp$ or $\wa$, and let $\ord(\mu)=k>1$. 
Then $T=T_n(\lambda,m,0)$ acts linearly and inner faithfully on $A$
if and only if $n=\lcm(k,m)$ and the action is given by one of the following:
\begin{enumerate}[label=(\alph*)]
\item $g \cdot u = \mu u$, $g \cdot v = \lambda\inv\mu v$, $x \cdot u = 0$, $x \cdot v = \eta u$ for some $\eta \in \kk^\times$, and if $A= \wa$ then $\lambda = \mu^2$; or \label{list.daction}
\item $g \cdot u = \lambda\inv\mu\inv u$, $g \cdot v = \mu\inv v$, $x \cdot u = \eta v$ for some $\eta \in \kk^\times$, $x \cdot v = 0$, and if $A = \wa$ then $\lambda = \mu^{-2}$.
\end{enumerate}
\end{proposition}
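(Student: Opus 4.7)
The plan is to prove the two directions separately, with the reverse direction being a direct verification and the forward direction the bulk of the work.

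For the $(\Leftarrow)$ direction, I would simply check that each formula in (a) or (b) respects the defining relations $g^n=1$, $x^m=0$, $gx=\lambda xg$ of $T$ and preserves the relation $uv-\mu vu = \kappa$ of $A$ (with $\kappa=0$ for $\qp$ and $\kappa \in \kk^\times$ for $\wa$), using the twisted derivation rule $x \cdot (ab) = (g\cdot a)(x\cdot b) + (x\cdot a) b$. Inner faithfulness reduces to checking that $x$ acts nonzero, which holds since $\eta \neq 0$.

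For the $(\Rightarrow)$ direction, the first step is to exploit that $g$ is a grouplike, hence acts as an algebra automorphism of $A$. The graded automorphism group of both $\qp$ (for $\mu^2 \neq 1$) and $\wa$ is $(\kk^\times)^2$, so one obtains $g \cdot u = \alpha u$ and $g \cdot v = \beta v$ for some $\alpha,\beta \in \kk^\times$. The borderline case $\mu = -1$ (i.e.\ $k=2$) admits extra swap automorphisms and would be handled separately: since $\grp g \subset \Aut(A)$ is cyclic and the swap has order $2$, a standard decomposition reduces again to the diagonal case after, if necessary, a change of basis. Next, writing $x \cdot u = \eta_{11} u + \eta_{21} v$ and $x \cdot v = \eta_{12} u + \eta_{22} v$, Lemma~\ref{lem.gxcom} (applicable when $m \geq 3$) forces $\eta_{11} = \eta_{22} = 0$ and $\eta_{12}\eta_{21} = 0$, giving the two shapes in (a) and (b).

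Applying $x$ to the relation $uv - \mu vu = \kappa$ through the twisted Leibniz rule yields, in case (b), $(1-\mu\beta)\eta v^2 = 0$ and hence $\beta = \mu^{-1}$; combined with $gx = \lambda xg$ (which forces $\beta = \lambda \alpha$), we obtain $\alpha = \lambda^{-1}\mu^{-1}$. For the Weyl algebra, the additional constraint that $g$ preserve the constant $1$ on the right-hand side --- i.e.\ $\alpha\beta = 1$ --- combined with $\beta = \mu^{-1}$ and $\beta = \lambda\alpha$ yields $\lambda = \mu^{-2}$. Case (a) is symmetric. Finally, the equation $n = \lcm(k,m)$ emerges from $g^n = 1$ (forcing $\alpha^n = \beta^n = 1$) together with inner faithfulness: if $g$'s action on $\Span\{u,v\}$ had order $n' < n$, then $g^{n'}-1$ would generate a nonzero Hopf ideal in the kernel. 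The order of $g$ on $\Span\{u,v\}$ is $\lcm(\ord \alpha, \ord \beta)$, and substituting the computed values of $\alpha,\beta$ gives exactly $\lcm(k,m)$.

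The main obstacle I anticipate is the edge case $m=2$ (i.e.\ $\lambda = -1$), where Lemma~\ref{lem.gxcom} is unavailable and the allowed matrix shapes for $x$ are less restricted; this requires a more direct argument combining $gx = -xg$ with the nilpotency $x^2 = 0$ and the relation $uv - \mu vu = \kappa$. The remaining computations are routine and follow the template described above.
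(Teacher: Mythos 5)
Your route for the main (diagonal) case is essentially the paper's: use the known automorphism group to make $g$ monomial, invoke Lemma~\ref{lem.gxcom} to reduce $x$ to a single off-diagonal entry, apply $x$ to the defining relation to pin down the eigenvalues of $g$, and deduce $n=\lcm(k,m)$ from faithfulness of $g$ on $\Span\{u,v\}$; those computations are all correct. The genuine gap is your treatment of the non-diagonal case. An anti-diagonal $g$ is an automorphism of $\qp$ or $\wa$ only when $\mu=-1$, and this case is in scope since $k>1$ allows $k=2$. Your proposed reduction --- diagonalize $g$ by a linear change of basis --- does not work: for $\mu=-1$ the eigenvectors of an anti-diagonal $g$ have the form $u\pm cv$ with $c\neq 0$, and $(u+cv)(u-cv)=u^2-c^2v^2-2c\,uv$ while $(u-cv)(u+cv)=u^2-c^2v^2+2c\,uv$, so the new generators satisfy no quantum plane (or quantum Weyl) relation and there is no ``diagonal case'' to reduce to. Moreover, the proposition classifies the action on the \emph{given} generators $u,v$, so even a structure-preserving change of basis would not yield the stated conclusion. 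What is needed (and what the paper does, following \cite{GWY}) is a direct argument that an anti-diagonal $g$ forces $x=0$: writing $g\cdot u=av$, $g\cdot v=bu$ and comparing coefficients in $(gx-\lambda xg)\cdot u$ and $(gx-\lambda xg)\cdot v$ gives $\eta_{22}=\lambda\eta_{11}=\lambda^2\eta_{22}$ and $\eta_{12}(1-\lambda^2)=0$, which kills $x$ whenever $\lambda^2\neq 1$; the residual subcase $\lambda=-1=\mu$ is then eliminated using $x^2=0$ together with the module-algebra condition on $uv+vu$.

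On the $m=2$ point you flag: you are right that Lemma~\ref{lem.gxcom} is stated only for $m\geq 3$, but $\eta_{11}=\eta_{22}=0$ needs only $\lambda\neq 1$, and when $\lambda=-1$ the surviving anti-diagonal shape for $x$ squares to $\eta_{12}\eta_{21}\,\mathrm{id}$ on $A_{(1)}$, so $x^2=0$ forces $\eta_{12}\eta_{21}=0$ anyway. Your sketched fix is therefore correct (the paper's own proof is silent on this same point). The verification direction and the $\lcm$ argument via the Hopf ideal generated by $g^{n'}-1$ are both sound.
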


\begin{proof}
By \cite{AC,AD}, either $g$ acts diagonally or anti-diagonally with respect to the given generators. There are no linear actions with $g$ acting non-diagonally on the given generators when $x \cdot A \neq 0$
and the proof of this follows similarly to \cite[Proposition 2.1]{GWY}.
Thus, we will assume that $g$ acts diagonally with respect to the given generators. 

With respect to the basis $\{u,v\}$ for $A_1$, let $x=(\eta_{ij})$ and $g=\diag(\alpha_1,\alpha_2)$ where $\alpha_i \in \kk^\times$ are $n$th roots of unity. In the case $A = \wa$ we have the additional restriction that $\alpha_2 = \alpha_1\inv$.
By Lemma~\ref{lem.gxcom}, $\eta_{11}=\eta_{22}=0$. Moreover, $\eta_{12}=0$ or $\eta_{21}=0$, but not both.

If $\eta_{21} = 0$, then $x \cdot u = 0$ and $x \cdot v = \eta_{12} u$. Furthermore,
\[ 0 = x \cdot (uv-\mu vu-\kappa) = (\alpha_1 - \mu)\eta_{12} u^2.\]
Thus, $\alpha_1 = \mu$ and so by \eqref{eq.coeff}, $\alpha_2=\lambda\inv\mu$. 
In the case of $\wa$, this implies $\lambda = \mu^2$. 
Similarly, if $\eta_{12} = 0$, then
$x \cdot u = \eta_{21}v$, $x \cdot v = 0$ and $(1-\mu\alpha_2)\eta_{21} v^2 = 0$ so $\alpha_2 = \mu \inv$ wherein $\alpha_1 = \lambda^{-1}\mu\inv$. 
In the case of $\wa$, this implies $\lambda = \mu^{-2}$. In either case, to satisfy $g^n=1$, we must have $k \mid n$.

Suppose the action of $T$ on $A$ is given as above.
Then $x \neq 0$ and so $T$ acts inner faithfully if $g$ acts faithfully on $A$. The result then follows because the order of the (matrix representation) of $g$ is $\lcm(\ord(\mu),\ord(\lambda\inv \mu)) = \lcm(\ord(\mu),\ord(\lambda)) = \lcm(k,m)$. 
\end{proof}

One of our goals will be to approach a classification along the lines of Proposition \ref{prop.genaction}
for quantum affine spaces. 
Though we do not state our classification so explicitly, we do characterize all actions
on quantum affine spaces in a way that we detail below.

By \cite[Lemma 3.5(e)]{KKZ1} and under our hypotheses, namely $p_{ij}\neq 1$, any automorphism on a quantum affine space $A=\kk_\bp[u_1,\hdots,u_t]$ may be represented by a monomial matrix.
That is, if $g \in \Aut(A)$, then there exists $\sigma_g \in \cS_t$ such that for all $k$, $g \cdot u_k = \alpha_k u_{\sigma_g(k)}$ for some $\alpha_k \in \kk^\times$.
We will show that under certain conditions we are able to limit the permutations associated to $g$.

Suppose $A$ is an algebra generated by $u_1,\hdots,u_t$. Under the linearity hypothesis, $x \cdot u_k = \sum_{i=1}^t \eta_{ik} u_i$ for all $k$.
We say $u_i$ is a {\sf summand} of $x \cdot u_k$ if $\eta_{ik} \neq 0$.
Alternatively, we say that $x \cdot u_k$ contains $u_i$ as a summand.

\begin{lemma}
\label{lem.diagonal}
Suppose $H(G,g,\chi,\gamma)$ acts linearly and inner faithfully on $A=\kk_\bp[u_1,\hdots,u_t]$, $t, \ord(p_{ij}) \geq 3$.
If $x$ is nilpotent or $m>t$, then $g$ acts diagonally on $A$.
\end{lemma}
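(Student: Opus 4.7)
The plan is to argue by contradiction: assume $g$ does not act diagonally on $A$, and derive $x\cdot A = 0$, contradicting inner faithfulness. By \cite[Lemma~3.5(e)]{KKZ1}, any graded automorphism of $A$ is represented by a monomial matrix, so there exist $\sigma \in \cS_t$ and $\alpha_k \in \kk^\times$ with $g\cdot u_k = \alpha_k u_{\sigma(k)}$. Writing $x\cdot u_k = \sum_i\eta_{ik}u_i$, applying $gx = \lambda xg$ to $u_k$ and extracting the coefficient of $u_j$ (with $a = \sigma^{-1}(j)$) yields the key equivariance
\[
\eta_{\sigma(a),\sigma(b)} = \lambda^{-1}\tfrac{\alpha_a}{\alpha_b}\eta_{a,b}, \qquad a,b\in\{1,\ldots,t\}.
\]
Our goal is $\sigma=\id$.

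Suppose not, and fix a cycle $(i_1,\ldots,i_r)$ of $\sigma$ of length $r\ge 2$. Since $g$ is an algebra automorphism, $p_{\sigma(i),\sigma(j)} = p_{ij}$. If $r$ is even, then $\sigma^{r/2}$ swaps $i_1$ and $j:=\sigma^{r/2}(i_1)$, giving $p_{i_1,j} = p_{j,i_1} = p_{i_1,j}^{-1}$ and hence $p_{i_1,j}^2 = 1$, contradicting $\ord(p_{ij})\ge 3$. So $r$ is odd, $r\ge 3$. Iterating the equivariance around the cycle gives $\eta_{a,b} = \lambda^{-r}\eta_{a,b}$ for $a,b$ in the cycle, since the $\alpha$-ratios telescope. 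Under the hypothesis $m>t$, we have $r\le t<m$, so $\lambda^r\ne 1$ and all in-cycle entries of $N:=(\eta_{ij})$ vanish. Under the alternative hypothesis that $x$ is nilpotent (so that $N$ is a nilpotent matrix on $A_{(1)}$), the same conclusion holds: if $\lambda^r=1$, the equivariance forces the cycle-block of $N$ to be a weighted cyclic shift in the $g$-eigenbasis of the cycle's invariant subspace, whose $r$-th power is diagonal with entries equal to products of the shift coefficients, hence nonzero unless every shift coefficient vanishes --- incompatible with $N$ nilpotent.

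It remains to propagate this vanishing off the cycle. Expand $x(u_{i_1}u_{i_2} - p_{i_1i_2}u_{i_2}u_{i_1}) = 0$ using $\Delta(x) = g\otimes x + x\otimes 1$ and collect coefficients in the PBW basis $\{u_au_b : a\le b\}$ of $A_{(2)}$; using the within-cycle vanishing, the coefficients of $u_{i_1}^2$, $u_{i_2}^2$, and $u_{i_2}u_k$ (for $k$ outside the cycle) force $\eta_{k,i_1} = \eta_{k,i_2} = 0$ for every $k$. The equivariance then yields $\eta_{k,i_s} = 0$ for every $k$ and every $s$, so $x\cdot u_{i_s} = 0$. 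With $x\cdot u_{i_1} = 0$ in hand, the relation $x(u_{i_1}u_j - p_{i_1j}u_ju_{i_1}) = 0$ for $j$ outside the cycle simplifies to $\alpha_{i_1}u_{i_2}(x\cdot u_j) = p_{i_1j}(x\cdot u_j)u_{i_1}$, whose expansion in the PBW basis forces $\eta_{k,j} = 0$ for every $k$. Therefore $x$ acts as zero on all of $A_{(1)}$, hence on $A$; then $(x)\triangleleft H$ is a nonzero Hopf ideal annihilating $A$, contradicting inner faithfulness. The main obstacle is the nilpotent-$x$ sub-case --- verifying that the twisted cyclic structure on the cycle is incompatible with $N$ being nilpotent --- together with organizing the coefficient bookkeeping in the final propagation step for arbitrary odd cycle length.
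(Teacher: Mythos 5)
Your overall strategy coincides with the paper's: represent $g$ by a monomial matrix, extract the equivariance $\eta_{\sigma(a),\sigma(b)}=\lambda^{-1}(\alpha_a/\alpha_b)\eta_{a,b}$ from $gx=\lambda xg$, and use the quadratic relations of $A$ to propagate vanishing. Your parity observation (an even cycle would force $p_{i_1,j}^2=1$) is a genuine simplification not in the paper, your treatment of the case $m>t$ (where $\lambda^r\neq 1$ kills all in-cycle entries) is correct, and the two off-cycle propagation steps, while the bookkeeping of which PBW coefficients you read off is slightly off (the monomials $u_{i_1}u_k$ and $u_{i_3}u_k$ are the ones that isolate $\eta_{k,i_2}$ and $\eta_{k,i_1}$), do go through.

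The gap is exactly at the point you flag as the main obstacle: the nilpotent sub-case with $\lambda^r=1$. You argue that the cycle block of $N$, being a weighted cyclic shift in the $g$-eigenbasis, has $r$-th power equal to a diagonal matrix whose entries are products of the shift coefficients, ``hence nonzero unless every shift coefficient vanishes.'' That implication is backwards: a product vanishes as soon as \emph{one} factor does, so a weighted $r$-cycle shift with exactly one coefficient zeroed out is a nonzero nilpotent matrix. Thus $gx=\lambda xg$ together with nilpotency of $N$ does \emph{not} force the cycle block to vanish. (There is also a secondary issue: nilpotency of $N$ does not pass to the compression $N_{CC}$ until you know $N$ is block-triangular with respect to the cycle decomposition, which you have not yet established at that stage.) To close the gap you must use the quadratic relations \emph{inside} the cycle, as the paper does: the coefficient of $u_{i_1}^2$ in $x\cdot(u_{i_1}u_{i_s}-p_{i_1 i_s}u_{i_s}u_{i_1})$ forces $\eta_{i_1,i_s}=0$ for $s\neq 1$, and the equivariance then kills every in-cycle skew diagonal except the main one, leaving $x$ restricted to the cycle equal to $\diag(a,\lambda^{-1}a,\ldots,\lambda^{-(r-1)}a)$; only at that point does nilpotency (or $m>t$) yield $a=0$.
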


\begin{proof}
Our goal is to show that $\ord(\sigma_g)=1$. Note that
\begin{align}
\label{eq.perm}
0   = g \cdot (u_iu_j-p_{ij}u_ju_i) = \alpha_i \alpha_j (u_{\sigma_g(i)}u_{\sigma_g(j)}-p_{ij}u_{\sigma_g(j)}u_{\sigma_g(i)})
    = \alpha_i \alpha_j (p_{\sigma_g(i)\sigma_g(j)}-p_{ij})
u_{\sigma_g(j)}u_{\sigma_g(i)}.
\end{align}

First suppose that $\sigma_g$ is a $t$-cycle.
After possibly renumbering the generators of $A$, we may assume that the action of $g$ on $A$ is defined by $g \cdot u_i = \alpha_{i+1} u_{i+1}$ for $1 \leq i < t$ and $g \cdot u_t=\alpha_1 u_1$. Set $\lambda=\chi(g)$. Then for $k<t$,
\begin{align*}
(gx-&\lambda xg) \cdot u_k
	= g \cdot (\eta_{1k}u_1 + \cdots + \eta_{tk}u_t) - \lambda x \cdot (\alpha_{k+1} u_{k+1}) \\
	&= \left( \eta_{1k}(\alpha_2 u_2) + \cdots + \eta_{(t-1)k}(\alpha_t u_t) + \eta_{tk}(\alpha_1 u_1) \right)
		- \lambda\alpha_{k+1} (\eta_{1(k+1)}u_1 + \cdots + \eta_{t(k+1)}u_t) \\
	&= (\eta_{tk}\alpha_1-\lambda\alpha_{k+1}\eta_{1(k+1)})u_1
		+ (\eta_{1k}\alpha_2-\lambda\alpha_{k+1}\eta_{2(k+1)})u_2 + \cdots
		+ (\eta_{(t-1)k}\alpha_t-\lambda\alpha_{k+1}\eta_{t(k+1)})u_t.
\end{align*}
A similar computation with $u_t$ now shows that for any $k$,
\begin{equation}\label{eq:circulant}
  \eta_{1k} = \lambda \frac {\alpha_{k+1}}{\alpha_2} \eta_{2,k+1} = \lambda^2 \frac {\alpha_{k+1}}{\alpha_2} \frac {\alpha_{k+2}}{\alpha_3} \eta_{3, k+2} = \ldots
\end{equation}
where subscripts are understood $(\mathrm{mod}~t)+1$.
Thus, entries along skew diagonals of $x$ are either all zero or all nonzero.
For $1<j<n$,
\[ x \cdot  (u_1u_j-p_{1j}u_ju_1)
	= \left[ (\alpha_2 u_2)(x \cdot u_j) + (x \cdot u_1)u_j\right] - p_{1j} \left[ (\alpha_{j+1} u_{j+1})(x \cdot u_1) + (x \cdot u_j)u_1\right].\]
When $j=n$ the same computation holds but $\alpha_{j+1} u_{j+1}$ is replaced by $\alpha_1 u_1$. Since by \eqref{eq.coeff} $u_1$ is not a summand of $x \cdot u_1$, then it is clear that $u_1^2$ appears as a summand only in the product $(x \cdot u_j)u_1$. As $p_{1j} \neq 0$, it must be that $\eta_{1j} = 0$.
It follows from \eqref{eq:circulant} that $x$ is represented by the diagonal matrix $\diag(a,\lambda\inv a,\cdots,\lambda^{-(t-1)} a)$ with $\ord(\lambda) \leq t$. Such a matrix is nilpotent if and only if $x=0$.

Next assume that $\sigma_g = (1~2~\cdots~k)$ for some $1<k < t$.
If $k=2$, then by \eqref{eq.perm}, $p_{12}=p_{21}=p_{12}\inv$, contradicting our hypothesis on the $p_{ij}$.
Hence, we may assume that $k>2$ and also that $t>2$.
The proof above shows that the upper-left $k \times k$ block of $x$ will be a diagonal matrix of the form
$\diag(a,\lambda\inv a,\cdots,\lambda^{-(k-1)} a)$ with $\ord(\lambda) \leq k<t$. Let $i\leq k$ and $k+1 \leq j \leq t$. Then
\begin{align}
\label{eq.cyc}
x \cdot (u_iu_j - p_{ij}u_ju_i)
	= ( (g \cdot u_i)(x \cdot u_j) + (x \cdot u_i)u_j) - p_{ij}( (g \cdot u_j)(x \cdot u_i) + (x \cdot u_j)u_i).
\end{align}
Since $u_i$ is not a summand of $g \cdot u_i$ or $g \cdot u_j$ it follows that the coefficient of $u_i^2$ is $-p_{ij}\eta_{ij}$, so $\eta_{ij}=0$.
Now $x$ is represented by a block lower-triangular matrix where the upper left block is the stated diagonal matrix, whence $x$ is not nilpotent.

Finally, we assume that $\sigma_g = \tau_1 \cdots \tau_\ell$ for disjoint nontrivial cycles $\tau_i$.
After possibly renumbering the generators, 
write $\tau_1=(1~2~\cdots~k)$, $\tau_2=(k+1~k+2~\cdots~k+k')$, and so on.
We partition $x$ into blocks $(X_{ij})$ where $X_{ii}$ is a $\ord(\tau_i) \times \ord(\tau_i)$ matrix.
The arguments above show that $X_{ii}$ will be diagonal matrices and that $\ord(\lambda) \leq \ord(\tau_i)$ for each $i$, $1 \leq i \leq \ell$.
Moreover, the argument following \eqref{eq.cyc} shows that $X_{ij}=0$ for $i<j$. But then $x$ is not nilpotent.
\end{proof}

In light of Lemma \ref{lem.diagonal}, we assume henceforth that $g$ acts diagonally on a quantum affine space, so $g \cdot u_i=\alpha_i u_i$ for some $\alpha_i \in \kk$ and $x \cdot u_j=\sum \eta_{ij} u_i$.
The next lemma shows that the possible actions of $x$ are limited.

\begin{lemma}
\label{lem.xprops}
Suppose $H(G,g,\chi,\gamma)$ acts linearly and inner faithfully on $A=\kk_\bp[u_1,\hdots,u_t]$.
Assume $m$, $t$, and $\ord(p_{ij})$ for $i \neq j$ are all at least $3$,
and that $g$ acts diagonally on $A$.
\begin{enumerate}
\item For all $i,j$, $\eta_{ij}\eta_{ji}=0$. In particular, for all $k$, $\eta_{kk}=0$.
\item There is at most one nonzero entry in each column of $x$.
\item There is at most one nonzero entry in each row of $x$.
\item If $t>3$ and $i,j,k,\ell$ are all distinct, then $\eta_{ij}\eta_{k\ell} = 0$.
\item If $m\neq 3$, then the matrix $x$ is nilpotent.
\item If $\ord(g)=m$ or $t \leq m \neq 3$, then $\gamma=0$.
\end{enumerate}
\end{lemma}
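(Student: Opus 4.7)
The plan is to handle the six parts in order using one unified technique: applying $x$ as a twisted derivation to each quadratic relation $u_a u_b - p_{ab} u_b u_a = 0$ of $A$. Part (1) is immediate from Lemma \ref{lem.gxcom}, since $m \geq 3$.

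For parts (2)--(4), expanding $x \cdot (u_a u_b - p_{ab} u_b u_a) = 0$ via $\Delta(x) = g \otimes x + x \otimes 1$, substituting $x u_c = \sum_s \eta_{sc} u_s$, and normalizing each product using the commutation relations, one obtains
\[ \sum_s \eta_{sb}(\alpha_a p_{as} - p_{ab})\, u_s u_a + \sum_s \eta_{sa}(1 - p_{ab}\alpha_b p_{bs})\, u_s u_b = 0. \]
For $s \neq a, b$ the monomials $u_s u_a$ and $u_s u_b$ are independent basis elements, so each coefficient vanishes; the case $s = a$ in the first sum gives the $u_a^2$ identity $\eta_{ab}(\alpha_a - p_{ab}) = 0$. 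These scalar identities are the workhorses. For (2), with $\eta_{ik}, \eta_{jk} \neq 0$ and $i \neq j$, taking $(a,b)=(j,k)$ yields $\alpha_j = p_{jk}$ from the $u_j^2$ coefficient and $\alpha_j p_{ji} = p_{jk}$ from the $s = i$ case, whence $p_{ij} = 1$, contradicting $\ord(p_{ij}) \geq 3$. For (3), with $\eta_{ki}, \eta_{kj} \neq 0$, the $u_k^2$ coefficients of the $(k, i)$ and $(k, j)$ relations give $\alpha_k = p_{ki} = p_{kj}$; then $(a,b) = (i,j)$ with $s = k$ yields $\alpha_i = p_{ij}p_{ki}$ and $\alpha_j = p_{ji}p_{kj}$, and combining with the constraint $\alpha_i = \alpha_j$ from Lemma \ref{lem.gxcom} forces $p_{ij}^2 = 1$.

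Part (4) is the most delicate. Assuming $\eta_{ij}, \eta_{k\ell} \neq 0$ for distinct $i, j, k, \ell$, the $u_i^2$ and $u_k^2$ coefficients give $\alpha_i = p_{ij}$ and $\alpha_k = p_{k\ell}$. Applying $(a,b) = (j, \ell)$ with $s = k$ and $s = i$ gives $\alpha_j = p_{j\ell}p_{kj}$ and $\alpha_\ell = p_{\ell j}p_{i\ell}$. The equalities $\alpha_i = \lambda \alpha_j$ and $\alpha_k = \lambda \alpha_\ell$ produce two expressions for $\lambda$, whose product simplifies (using $p_{\ell j}p_{j\ell} = 1$) to $\lambda^2 = p_{ij}p_{jk}p_{k\ell}p_{\ell i}$. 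Separately, the choices $(a,b) = (i,\ell)$, $s = k$ and $(a,b) = (k,j)$, $s = i$ yield the auxiliary identity $p_{ij}p_{k\ell} = p_{i\ell}p_{kj}$. Substituting forces $\lambda^2 = 1$, contradicting $m \geq 3$.

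Part (5) follows combinatorially: view the support of $x$ as a directed graph with an edge $j \to i$ when $\eta_{ij} \neq 0$. Parts (2) and (3) make this a partial permutation, (1) forbids 2-cycles, and (4) forbids cycles of length $\geq 4$ since any such cycle contains two edges with disjoint vertex sets. A 3-cycle would force $\lambda^3 = 1$ by iterating Lemma \ref{lem.gxcom}, so if $m \neq 3$ there are no cycles and $x$ is nilpotent. For (6), by (5) the $t \times t$ matrix $x$ satisfies $x^t = 0$, so $x^m = 0$ as an operator whenever $t \leq m$; the relation $x^m = \gamma(g^m - 1)$ in $H$ then forces $\gamma(g^m - 1)$ to annihilate $A$. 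If $\ord(g) = m$, the element $g^m - 1$ is already zero in $H$ and $\gamma$ is redundant. Otherwise $\gamma \neq 0$ would make $g^m$ act as the identity on $A$, so the action factors through the proper Hopf quotient $H/(g^m - 1)$, contradicting inner faithfulness. The main obstacle is the bookkeeping in (4): one must select precisely the right $(a,b,s)$ triples from among many possible quadratic relations so that the parameter identities chain together to produce $\lambda^2 = 1$, rather than merely imposing consistency conditions on the $p_{ab}$ that would be satisfiable a priori.
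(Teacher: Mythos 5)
Your proposal is correct and follows essentially the same strategy as the paper's proof: apply $x$ to the quadratic relations $u_au_b-p_{ab}u_bu_a$, extract coefficients of carefully chosen monomials to pin down the $\alpha$'s in terms of the $p_{ab}$'s, and combine with the constraint \eqref{eq.coeff} to force $\lambda^2=1$ or $p_{ij}^2=1$; the nilpotency and $\gamma=0$ arguments also match. The only differences are cosmetic — in (3) you land on $p_{ij}^2=1$ where the paper derives $\lambda^2=1$, in (4) you chain through a slightly different (but equivalent) set of relations, and in (5) you phrase the paper's case analysis as a statement about cycles in the support graph.
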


\begin{proof}
(1) This is Lemma \ref{lem.gxcom}.

(2) 
Suppose that $\eta_{rk},\eta_{sk} \neq 0$, with $r<s$. 
By (1), $r,s \neq k$. Then
\begin{align*}
0 	&= x \cdot (u_ru_k-p_{rk}u_ku_r) \\
	&= \left( \alpha_r u_r \left(\sum_{j \neq k} \eta_{jk} u_j\right) + \left(\sum_{i \neq r} \eta_{ir} u_i\right)u_k\right)
		- p_{rk} \left( \alpha_k u_k \left(\sum_{i \neq r} \eta_{ir} u_i\right) + \left(\sum_{j \neq k} \eta_{jk} u_j\right)u_r\right) \\
	&= \eta_{rk} (\alpha_r-p_{rk}) u_r^2 + \eta_{sk} (\alpha_r p_{rs} - p_{rk})u_s u_r + \text{(terms not involving $u_r^2$ and $u_su_r$)}.
\end{align*}
Thus, $\alpha_r p_{rs} = p_{rk}=\alpha_r$, so $p_{rs}=1$, a contradiction.

(3) Suppose $\eta_{r\ell},\eta_{rk} \neq 0$ with $\ell < k$.
Again by (1), $\ell,k \neq r$. By (2), we have $x \cdot u_\ell = \eta_{r\ell} u_r$ and $x \cdot u_k = \eta_{rk} u_r$.
A computation as above shows that the $u_r^2$ coefficient in $x \cdot (u_ru_\ell-p_{r\ell}u_\ell u_r)$ is $\eta_{r\ell}(\alpha_r-p_{r\ell})$ and in $x \cdot (u_ru_k-p_{rk}u_ku_r)$ is $\eta_{rk} (\alpha_r-p_{rk})$. Now
\begin{align*}
0	&= x \cdot (u_\ell u_k - p_{\ell k} u_k u_\ell) \\
	&= \left( \alpha_\ell u_\ell \left(\eta_{rk} u_r\right) + \left(\eta_{r\ell} u_r\right)u_k\right)
		- p_{\ell k} \left( \alpha_k u_k \left(\eta_{r\ell} u_r\right) + \left(\eta_{rk} u_r\right)u_\ell\right) \\
	&= \eta_{rk} (\alpha_\ell - p_{\ell k} p_{r \ell}) u_\ell u_r
		+ \eta_{r\ell} (p_{rk} - p_{\ell k} \alpha_k) u_k u_r.
\end{align*}
By \eqref{eq.coeff}, $\alpha_r = \lambda \alpha_\ell=\lambda \alpha_k$, so $\alpha_\ell = \alpha_k$. Then
\begin{align*}
\alpha_\ell = p_{\ell k} p_{r \ell} = \alpha_k\inv p_{rk} p_{r\ell} = \alpha_k\inv \alpha_r^2 = \alpha_\ell \lambda^2.
\end{align*}
Thus $\lambda^2=1$, a contradiction.

(4) Assume $i,j,k,\ell$ are all distinct and $\eta_{ij},\eta_{k\ell} \neq 0$, so necessarily $t>3$.
Also, by (2) and (3), these are the distinct nonzero elements in their respective row and column.
The coefficient of $u_k u_i$ in $x \cdot (u_i u_\ell - p_{i \ell} u_\ell u_i)$ is $\eta_{k\ell}(\alpha_i p_{ik}-p_{i\ell})$ and in $x \cdot (u_j u_k - p_{jk} u_k u_j)$ it is $\eta_{ij}(p_{ik}-\alpha_k p_{jk})$. Moreover,
\[ x \cdot (u_ju_\ell - p_{j\ell} u_\ell u_j)
	= \eta_{ij}(p_{i\ell} - \alpha_\ell p_{j\ell}) u_\ell u_i - \eta_{k\ell}(\alpha_jp_{jk}-p_{j\ell} )u_k u_j.\]
Because $\eta_{ij},\eta_{k\ell} \neq 0$, then by \eqref{eq.coeff}, $\alpha_i=\lambda \alpha_j$ and $\alpha_k = \lambda \alpha_\ell$. Hence,
\[
\alpha_\ell\inv \alpha_i p_{ik} = \alpha_\ell\inv p_{i\ell} = p_{j\ell}
= \alpha_j p_{jk} = \alpha_j\alpha_k\inv p_{ik} = \lambda^{-2} \alpha_i\alpha_\ell\inv p_{ik}.
\]
It follows that $\lambda^2=1$, a contradiction.

(5) When $t=2$, the matrix $x$ is nilpotent by (1). 
Let $\eta_{ij}$ be a nonzero entry in $x$.
After possibly renumbering generators, we may assume that $j>i$.
By (2) and (3), $\eta_{ij}$ is the only nonzero entry in its row and column.
Moreover, by (4), the only other possible nonzero entries 
are of the form $\eta_{\ell i}$ or $\eta_{jk}$ for some $\ell \neq i$ and $k \neq j$.
Suppose both are nonzero.
By (1), we also have $\ell \neq j$ and $k \neq i$.
If $\ell \neq k$, then $\eta_{\ell i}\eta_{jk}=0$ by (4), so $\ell =k$.
But then by \eqref{eq.coeff}, we have
$\alpha_k = \lambda \alpha_i = \lambda^2 \alpha_ j = \lambda^3 \alpha_k$, so $\lambda^3=1$, a contradiction.
Thus, at most one of $\eta_{\ell i}$ or $\eta_{jk}$ is nonzero and it is clear that $x$ is nilpotent.

(6) If $\ord(g)=m$, then the result is clear. Assume $\ord(g)\neq m$ and $t \leq m \neq 3$. By (5), $x$ is nilpotent so $g$ acts diagonally by Lemma \ref{lem.diagonal}.
Since $x$ acts linearly, then $x^k=0$ for some $k \leq t$, so $0=x^m=\gamma(g^m-1)$. Thus, either $\gamma=0$ or $g^m$ acts trivially.
In the latter case, $\ord(g)=m$ by the inner faithful hypothesis, but this contradicts our hypotheses, so $\gamma=0$.
\end{proof}

Without the hypothesis that $m \neq 3$, it is possible to have actions of rank one pointed Hopf algebras in which $x$ is not nilpotent.

\begin{example}
\label{ex.nnilp}
Suppose $m=3$ and, for simplicity, assume $t=3$.
Let $\lambda$ be a primitive third root of unity.
We will consider a generalized Taft algebra action on $\kk_\bp[u_1,u_2,u_3]$.
Let $g=\diag(\alpha_1,\alpha_2,\alpha_3)$ and assume the nonzero entries in $x$
are $\eta_{12},\eta_{23},\eta_{31}$.
Observe that $x$ is not nilpotent.
We have
\begin{align*}
x \cdot (u_1u_2-p_{12}u_2u_1) 
    &= \eta_{12}(\alpha_1-p_{12})u_1^2 + \eta_{31}(1-\alpha_2p_{12}p_{23})u_3u_2, \\
x \cdot (u_2u_3-p_{23}u_3u_2) 
    &= \eta_{23}(\alpha_2-p_{23})u_2^2 + \eta_{12}(1-\alpha_3p_{23}p_{31})u_3u_1, \\
x \cdot (u_3u_1-p_{31}u_1u_3) 
    &= \eta_{31}(\alpha_3-p_{31})u_3^2 + \eta_{23}(1-\alpha_1p_{31}p_{12})u_2u_1.  
\end{align*}
Let $\alpha$ be a primitive ninth root of unity such that $\lambda=\alpha^{-3}$.
Set $\alpha_3=\alpha$, $\alpha_2=\lambda\alpha$, and $\alpha_1=\lambda^2\alpha$.
Hence, by \eqref{eq.coeff}, $(gx-\lambda xg) \cdot u_i=0$ for all $i$.
Set $p_{12}=\lambda^2\alpha$, $p_{23}=\lambda\alpha$ and $p_{31}=\alpha$.
It now follows that
\[ \alpha_2p_{12}p_{23} = (\lambda\alpha)(\lambda^2\alpha)(\lambda\alpha) = \lambda\alpha^3=1,\]
and so the first equation above vanishes.
One verifies similarly that the remaining equations vanish.
Now we see that $(g^3-1) \cdot u_i = (\alpha^3-1)u_i$ and $x^3 \cdot u_i=u_i$.
Since $\alpha^3 = \lambda\inv \neq 1$, then we set $\gamma=(\alpha^3-1)\inv \neq 0$ and so the above defines an action of $T_n(\lambda,3,\gamma)$ on $\kk_\bp[u_1,u_2,u_3]$.
\end{example}

Proposition \ref{prop.genaction} and Lemma \ref{lem.xprops} give some insight into the actions of rank 1 pointed Hopf algebras on $A=\qp$ or $\wa$.
Let $H=H(G,g,\chi,\gamma)$ and assume $x$ is nilpotent (for example, when the hypotheses of Lemma \ref{lem.xprops} (6) are satisfied).
We know by \cite{KR} that $G$ must be finite and because the elements of $G$ act diagonally as automorphisms on $A$, we have that $G$ is abelian.
The distinguished element $g \in G$ acts according to Proposition \ref{prop.genaction}.
Let $a \in G$ and assume that $\eta_{12}\neq 0$ in $x$.
Then $a$ and $x$ satisfy \eqref{eq.coeff} but for the corresponding $\chi(a)$ in place of $\lambda$.
Then, when $a$ is considered as an element of $\Aut(A)$ it takes the form
\[ a=\begin{pmatrix}\chi(a)\beta & 0 \\ 0 & \beta\end{pmatrix},	\quad\beta \in \kk^\times.\]

We now restrict our study to the subalgebra of $H(G,g,\chi,0)$ that is generated by $g$ and $x$.
Recall that this is a generalized Taft algebra.
We remark that, by Proposition \ref{prop.genaction}, an action of Type (a) on $\qp$ is the same as an action of Type (b) on $\kk_{\mu\inv}[v,u]$.
Thus, we will henceforth assume that all actions on a quantum plane
are of Type (a) but will differentiate between the two algebras even though they are isomorphic.

We will assume throughout that all parameters are roots of unity of order at least three. The reason for this restriction is to avoid getting bogged down in special cases. We will show that there are only two types of actions. The first is just trivial extensions of actions on quantum planes. In certain cases, there are trivial extensions of actions on quantum 3-spaces as described below.

\begin{example}
\label{ex.qas}
This is a generalization of \cite[Example 2.1]{GWY}.
Let $A=\kk_\bp[u_1,u_2,u_3]$ such that $\ord(p_{ij})>2$ for all $i \neq j$.
Define a linear action of $T_n(\lambda,m,0)$, $m>2$, on $A$ such that $g$
acts diagonally and the only nonzero elements of $x$ are $\eta_{12},\eta_{23}$.
By \eqref{eq.coeff}, $\alpha_1=\lambda \alpha_2=\lambda^2 \alpha_3$. 
We borrow computations from Example \ref{ex.nnilp},
but here $\eta_{31}=0$.
Hence, $\alpha_1=p_{12}$ and $\alpha_2=p_{23}$. Furthermore, 
\[ \alpha_1^2p_{31} = \alpha_1p_{31}p_{12} = 1 = \alpha_3p_{23}p_{31} = (\lambda^{-2}\alpha_1)(\lambda^{-1}\alpha_1)p_{31}.\]
This implies that $\lambda^3=1$ and that $p_{13}=\alpha_1^2$.
\end{example}

Given a quantum affine space $A=\kk_\bp[u_1,\hdots,u_t]$,
we say $T=T_n(\lambda,m,0)$ acts as a trivial extension of an action on the quantum affine 3-space subalgebra $A_{ijk}$ if the action is as given in Example \ref{ex.qas}.
That is, $g$ acts diagonally on $A$, $x \cdot u_j=\eta_{ij}u_i$, $x \cdot u_k=\eta_{jk}u_j$, and $x \cdot u_\ell=0$ for all $\ell \neq j,k$.

We remark briefly that Example \ref{ex.qas} does not extend beyond the $t=3$ case. For example, suppose $T_n(\lambda,m,0)$, $m>2$, acts on $A=\kk_\bp[u_1,u_2,u_3,u_4]$, $\ord(p_{ij})>2$ for all $i \neq j$.
If $g$ acts diagonally and $x$ acts by
\[ x \cdot u_1=0, \quad x \cdot u_2=\eta_{12}u_1, \quad 
x \cdot u_3=\eta_{23}u_2, \quad  x \cdot u_4=\eta_{34}u_4,\]
then $\eta_{12}\eta_{34}=0$ by Lemma \ref{lem.xprops} (4).

\begin{theorem}
\label{thm.nspace}
Suppose $T_n(\lambda,m,0)$ acts linearly and inner faithfully on $A=\kk_\bp[u_1,u_2,\cdots,u_t]$.
Assume $m$, $t$, and $\ord(p_{ij})$ for $i \neq j$ are all at least $3$.
Then every action is a trivial extension of an action on some $A_{ij}$ or $A_{ijk}$.
\end{theorem}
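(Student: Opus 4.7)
The plan is to combine Lemma \ref{lem.diagonal} with the combinatorial constraints from Lemma \ref{lem.xprops} to force the matrix of $x$ into a very rigid shape, and then identify that shape directly as a trivial extension on $A_{ij}$ or $A_{ijk}$. Since $\gamma = 0$, the defining relation $x^m = \gamma(g^m-1)$ reduces to $x^m = 0$, so $x$ acts nilpotently on $A$; by Lemma \ref{lem.diagonal}, $g$ then acts diagonally as $g \cdot u_i = \alpha_i u_i$, so Lemma \ref{lem.xprops} governs the matrix $(\eta_{ij})$ where $x \cdot u_j = \sum_i \eta_{ij} u_i$. Inner faithfulness together with \cite[Lemma 2.5]{KW} also ensures that at least one $\eta_{ij}$ is nonzero.

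The central step is to encode the support of $x$ as a directed graph $\Gamma$ on $\{1,\hdots,t\}$, placing an edge $j \to i$ whenever $\eta_{ij} \neq 0$. Parts (1)--(3) of Lemma \ref{lem.xprops} translate to: $\Gamma$ has no self-loops, no length-$2$ cycles, and at most one in-edge and one out-edge at each vertex. Thus $\Gamma$ decomposes as a disjoint union of simple paths and simple cycles of length $\geq 3$. Nilpotency of $x$ excludes the cycles, since on the span of the vertices of any length-$k$ cycle the restriction of $x$ is a cyclic-permutation block whose $k$-th power equals the product of the nonzero entries along the cycle times the identity, hence is nonzero. So $\Gamma$ is a disjoint union of simple paths.

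I would then invoke Lemma \ref{lem.xprops}(4) to control the size and connectivity of $\Gamma$. For $t \geq 4$, two edges with four distinct endpoints would violate (4), so any two edges of $\Gamma$ share a vertex; this forces $\Gamma$ to be connected, hence a single path, and it also forbids paths of length $\geq 3$, since any such path contains two edges whose four endpoints are all distinct. For $t=3$, two disjoint edges are impossible by a vertex count, and $3$-cycles are already excluded by nilpotency, so $\Gamma$ is again a single path of length $1$ or $2$.

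To conclude, a length-$1$ path $j \to i$ gives $x \cdot u_j = \eta_{ij} u_i$ and $x \cdot u_\ell = 0$ for $\ell \neq j$, which is by definition the trivial extension of the quantum plane action of Proposition \ref{prop.genaction} on $A_{ij}$, while a length-$2$ path $k \to j \to i$ reproduces exactly the configuration of Example \ref{ex.qas}, i.e., the trivial extension on $A_{ijk}$. The main obstacle I anticipate is the graph-theoretic bookkeeping: carefully translating Lemma \ref{lem.xprops}(4) into a connectedness-plus-length statement, and handling $t=3$ and $t \geq 4$ uniformly so that no configuration (such as a $3$-cycle on $3$ vertices or two disjoint edges) is overlooked.
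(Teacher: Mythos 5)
Your proposal is correct and follows essentially the same route as the paper: reduce to $g$ diagonal via Lemma \ref{lem.diagonal} (using $x^m=0$ for nilpotency), use parts (1)--(4) of Lemma \ref{lem.xprops} to pin the support of $x$ down to a single path of length one or two (the length-three cycle being excluded by nilpotency, exactly as in Example \ref{ex.nnilp}), and identify the two surviving configurations with the trivial extensions on $A_{ij}$ and $A_{ijk}$. The directed-graph packaging is only a cosmetic difference from the paper's case analysis on $\eta_{12},\eta_{23},\eta_{31}$, and in fact mirrors the graph argument the paper itself uses later in Theorem \ref{thm.QLSrank}.
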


\begin{proof}
By Lemma \ref{lem.diagonal}, $g$ acts diagonally on $A$.
If $x \neq 0$, then after a change of variable we may assume that $\eta_{12}\neq 0$. By Lemma \ref{lem.xprops}, this implies that the only other possible nonzero entries may be $\eta_{23}$ and $\eta_{31}$.
If they are all nonzero then we are in the situation of Example \ref{ex.nnilp}, whence $x$ is not nilpotent, a contradiction.
On the other hand, if $\eta_{23}=\eta_{31}=0$, then the action is a trivial extension of an action on a quantum plane.
Finally, if exactly one of $\eta_{23}$ or $\eta_{31}$ is nonzero, then we are in the setting of Example \ref{ex.qas}.
\end{proof}

Next we aim to understand actions of $B(G,\gu,\cu)$ on quantum affine spaces. Our primary goal will be to determine the maximum rank of such a $B$ and we do this by determining how to ``patch'' together actions of generalized Taft algebras.

By Lemma \ref{lem.diagonal}, we may assume that all of the $g_i$ act diagonally.
In light of Theorem \ref{thm.nspace}, we may assume that $x_1=(\eta_{ij})$ has nonzero entry $\eta_{12}$ and at most one other nonzero entry, either $\eta_{23}$ or $\eta_{31}$. 
After a change of variable, we may assume in either of the latter cases that $\eta_{12},\eta_{23} \neq 0$.

We begin by considering the above question for actions on quantum planes and quantum Weyl algebras.

\begin{lemma}
\label{lem.Baction}
Suppose $B(G,\gu,\cu)$ has rank $\theta$, and that $B$ acts linearly and inner faithfully on $A=\qp$ or $\wa$.
Assume $\ord(\mu)$ and $m_i$ for all $i$ are at least 3. Then
\begin{enumerate}
\item $\ord(\mu) \mid n_i$ and $B_i$ acts on $A$ according to Proposition \ref{prop.genaction} for each $i=1,\hdots,\theta$;
\item either all $B_i$ act according to Proposition \ref{prop.genaction} (a) or all act according to Proposition \ref{prop.genaction} (b);
\item for all $i \neq j$, we have $\lambda_i = \chi_j(g_i)$.
\end{enumerate}
\end{lemma}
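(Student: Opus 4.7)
The plan is to address the three parts sequentially, with Part~(1) requiring a careful re-reading of Proposition~\ref{prop.genaction} and Parts~(2) and~(3) following from direct computations with the commutation relations of $B$. The main obstacle is in Part~(1): the restriction of an inner faithful $B$-action to the Hopf subalgebra $B_i$ need not be inner faithful, so Proposition~\ref{prop.genaction} does not apply off the shelf.

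For Part~(1), I first invoke Proposition~\ref{prop.if}: since $B$ acts inner faithfully, each $x_i$ acts nonzero on $A$. I then restrict the $B$-action to $B_i \iso T_{n_i}(\lambda_i, m_i, 0)$. Although this restricted action need not be inner faithful, inspection of the proof of Proposition~\ref{prop.genaction} shows that inner faithfulness was invoked only at the final line to force $n = \lcm(k, m)$; the classification of $g$ as diagonal and of $x$ in form (a) or (b) used only $x \cdot A \neq 0$ together with $g$ acting as an automorphism of $A$ (plus, in the quantum Weyl case, the Jacobian condition on eigenvalues). Thus the action of $B_i$ on $A$ still takes one of the two forms (a) or (b), and the matrix of $g_i$ has order exactly $\lcm(k, m_i)$. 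Since $g_i^{n_i} = 1$ in $B$, this forces $\lcm(k, m_i) \mid n_i$, whence $\ord(\mu) \mid n_i$.

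For Part~(2), I argue by contradiction: suppose $B_i$ acts as Type~(a), so $x_i \cdot u = 0$ and $x_i \cdot v = \eta_i u$, while $B_j$ acts as Type~(b), so $x_j \cdot u = \eta_j v$ and $x_j \cdot v = 0$, with $\eta_i, \eta_j \in \kk^\times$. Applying the $B$-relation $x_i x_j = \chi_j(g_i) x_j x_i$ to $u$, the left side gives $x_i \cdot (\eta_j v) = \eta_i \eta_j u$ while the right side gives $\chi_j(g_i) \, x_j \cdot 0 = 0$, a contradiction. For Part~(3), by (2) I may assume all $B_i$ act as Type~(a), so $g_j \cdot v = \lambda_j\inv \mu v$ and $x_i \cdot v = \eta_i u$ with $\eta_i \neq 0$. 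Applying the relation $g_j x_i = \chi_i(g_j) x_i g_j$ to $v$ and reading off the coefficient of $u$ yields $\eta_i \mu = \chi_i(g_j) \lambda_j\inv \mu \, \eta_i$, i.e.\ $\chi_i(g_j) = \lambda_j$. Swapping the roles of $i$ and $j$ produces the desired identity $\lambda_i = \chi_j(g_i)$.
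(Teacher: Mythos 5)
Your proof is correct and follows essentially the same route as the paper: restrict to each $B_i$, observe that the form of the action in Proposition~\ref{prop.genaction} needs only $x_i\cdot A\neq 0$ (so that $g_i^{n_i}=1$ forces $\ord(\mu)\mid n_i$), and then apply the relations $x_ix_j=\chi_j(g_i)x_jx_i$ and $g_ix_j=\chi_j(g_i)x_jg_i$ to a generator to get (2) and (3). Your explicit handling of the fact that the restricted $B_i$-action need not be inner faithful is a point the paper passes over silently, but the underlying argument is identical.
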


\begin{proof}
Suppose $B$ acts linearly on $A$ such that $x_i \cdot A\neq 0$.
Since $B_i \iso T_{n_i}(\lambda_i,m_i,0)$ as Hopf algebras,
then $B_i$ acts linearly on $A$ and $x_i \cdot A \neq 0$.
Thus, the conditions in Proposition \ref{prop.genaction} are necessary and the action is the one given in that result. It follows that $\ord(\mu) \mid  n_i$ for each $i$.
We will show that all the $B_i$ act according to (a) or (b).

Without loss of generality, suppose $x_1$ acts on $A$ according to (a) and $x_2$ acts according to (b), then
\[ (x_1x_2 - \chi_2(g_1)x_2x_1) \cdot v = 0 - \chi_2(g_1)( x_2 \cdot (\eta_1 u)) = -\eta_1\eta_2\chi_2(g_1)v \neq 0,\]
a contradiction. Hence, after a linear change of variable we may assume that each $B_i$ acts according to (a). For $j=1,2$, we write $x_j \cdot v=\eta_j u$, $\eta_j \in \kk^\times$. If $i \neq j$, then
\begin{align*}
(g_ix_j - \chi_j(g_i)x_jg_i) \cdot v 
	&= g_i \cdot (\eta_j u) - \chi_j(g_i)(x_j \cdot (\lambda_i\inv \mu v))
	= \eta_j\mu ( 1 - \chi_j(g_i)\lambda_i\inv) u.
\end{align*}
Thus, $\lambda_i = \chi_j(g_i)$.
\end{proof}

We now proceed to study quantum affine spaces in general.

\begin{lemma}
\label{lem.Brank}
Suppose $B(G,\gu,\cu)$ has rank $\theta$, and that $B$ acts linearly and inner faithfully on $A=\qp$ or $\wa$.
Assume $\ord(\mu)$ and $m_i$ for all $i$ are at least 3.
Then $\theta \leq 2$, and if $\theta=2$, then there exists a primitive $m^{th}$ root of unity $\omega$ such that
$\chi_1(g_1) = \chi_2(g_1) = \omega$ and 
$\chi_1(g_2) = \chi_2(g_2) = \omega\inv$.
\end{lemma}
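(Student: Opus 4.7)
The plan is to deduce the bound on $\theta$ from the two sets of constraints on the scalars $\chi_i(g_j)$ that are already in hand. On the one hand, Lemma~\ref{lem.Baction}(3) provides $\lambda_i = \chi_j(g_i)$ for every $i \neq j$, where $\lambda_i \defeq \chi_i(g_i)$ has order $m_i \geq 3$. On the other hand, the definition of a quantum linear space supplies the compatibility $\chi_i(g_j) = \chi_j(g_i)\inv$ for $i \neq j$. Since Lemma~\ref{lem.Baction} has already done all of the analytic work (extracting how the $g_i$ and $x_i$ must act from the Hopf action), the remainder of the argument will be pure scalar arithmetic on characters of a finite abelian group.

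The first step I would take is to combine the two constraints to show that distinct $\lambda$'s are always mutual inverses. Applying Lemma~\ref{lem.Baction}(3) with the roles of $i$ and $j$ swapped gives $\lambda_j = \chi_i(g_j)$, and the quantum linear space compatibility then yields $\lambda_j = \chi_j(g_i)\inv = \lambda_i\inv$. So for any distinct indices $i, j$ in $\{1,\dots,\theta\}$ we have $\lambda_i \lambda_j = 1$.

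The second step is to rule out $\theta \geq 3$. If three such $\lambda_i$ existed, then $\lambda_1 \lambda_2 = 1 = \lambda_1 \lambda_3$ forces $\lambda_2 = \lambda_3$, while $\lambda_2 \lambda_3 = 1$ then gives $\lambda_2^2 = 1$. Since $\ord(\lambda_2) = m_2 \geq 3$, this is a contradiction, so $\theta \leq 2$.

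Finally, in the case $\theta = 2$, I would set $\omega \defeq \lambda_1$. Then $\chi_1(g_1) = \omega$ by definition, $\chi_2(g_1) = \omega$ by Lemma~\ref{lem.Baction}(3), $\chi_2(g_2) = \lambda_2 = \omega\inv$ from the relation just established, and $\chi_1(g_2) = \omega\inv$ by applying Lemma~\ref{lem.Baction}(3) once more. The order of $\omega$ is the common value $m = m_1 = m_2$ (equal because $\ord(\omega) = \ord(\omega\inv)$), which supplies the $m$ in the statement. I do not anticipate any real obstacle: the only care needed is to track which of the two constraints (the Hopf-action identity from Lemma~\ref{lem.Baction}(3) versus the QLS compatibility) is being applied at each step, and in particular to use Lemma~\ref{lem.Baction}(3) in both directions $(i,j)$ and $(j,i)$ before invoking the symmetric compatibility.
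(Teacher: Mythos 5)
Your proposal is correct and follows essentially the same route as the paper: both arguments combine Lemma~\ref{lem.Baction}(3) with the quantum linear space compatibility $\chi_i(g_j)\chi_j(g_i)=1$ to get $\lambda_i\lambda_j=1$ for $i\neq j$, rule out $\theta\geq 3$ via $\lambda_i^2=1$ contradicting $m_i\geq 3$, and read off the character values for $\theta=2$. Your formulation is marginally cleaner in isolating the general relation $\lambda_i\lambda_j=1$ before specializing, but the content is identical.
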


\begin{proof}
The case $\theta=1$ is handled by Proposition \ref{prop.genaction}. 
Suppose $\theta=2$ and set $\lambda_1 = \chi_1(g_1) = \omega$. 
By Lemma \ref{lem.Baction} (3), $\chi_2(g_1) = \lambda_1 = \omega$.
Moreover, the relations of $B$ imply that $\chi_1(g_2)=\chi_2(g_1)\inv = \omega\inv$.
Applying Lemma \ref{lem.Baction} (3) again, we have $\lambda_2 = \chi_2(g_2) = \chi_2(g_1)\inv = \omega\inv$.

Now suppose $\theta \geq 3$. Using the same logic as above we have
$\chi_2(g_3)=\chi_3(g_2)\inv = \omega$. But then
\[ \omega = \chi_2(g_3) = \chi_1(g_3) = \chi_3(g_1)\inv = \chi_1(g_1)\inv = \omega\inv.\]
Thus, $2 \geq \ord(\omega) = \ord(\lambda_1)$, contradicting our hypothesis.
\end{proof}

\begin{lemma}
\label{lem.genpatch}
Suppose $B(G,\gu,\cu)$ has rank $\theta\geq 2$, and that $B$ acts linearly and inner faithfully on $A=\kk_\bp[u_1,\hdots,u_t]$.
Assume $t$, $m_i$ for all $i$, and $\ord(p_{ij})$ for all $i \neq j$ are at least 3. Write $x_1=(\eta_{ij})$ and $x_2=(\mu_{ij})$.
\begin{enumerate}

\item If $\eta_{ij},\mu_{jk} \neq 0$, then $k\neq i$ and both $B_1$ and $B_2$ act as trivial extensions of an action on $A_{ijk}$.
\item If $\eta_{ij},\mu_{kj} \neq 0$, then $\lambda_1=\lambda_2\inv$.
\item There may be at most two $x_i$ with nonzero entries in the same column.
\end{enumerate}
\end{lemma}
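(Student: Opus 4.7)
The plan is to prove each part by combining the sparse row/column structure from Lemma~\ref{lem.xprops} with three sources of relations in $B$: the commutator $x_1 x_2 = \chi_2(g_1) x_2 x_1$, the quasi-derivation identities $x_a \cdot (u_b u_c - p_{bc} u_c u_b) = 0$, and the cross-commutation $g_a x_b = \chi_b(g_a) x_b g_a$ (which feeds \eqref{eq.coeff}). Throughout, write $g_a \cdot u_b = \alpha_{a,b} u_b$; Lemma~\ref{lem.diagonal} applies because each $x_i$ is nilpotent in $B$.

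For part (1), Lemma~\ref{lem.xprops}(2) forces $x_1 \cdot u_j = \eta_{ij} u_i$ and $x_2 \cdot u_k = \mu_{jk} u_j$ exactly. Applying the commutator to $u_k$ gives $\mu_{jk} \eta_{ij} u_i = \chi_2(g_1) \eta_{sk} \, x_2 \cdot u_s$, where $\eta_{sk} u_s = x_1 \cdot u_k$ is the unique possibly-nonzero summand (again by Lemma~\ref{lem.xprops}(2)). Since the left side is nonzero, $\eta_{sk}$ must be nonzero and $x_2 \cdot u_s = \mu_{is} u_i$ with $\mu_{is} \neq 0$. A brief case analysis using parts (1)--(4) of Lemma~\ref{lem.xprops} (together with $\eta_{rr} = \mu_{rr} = 0$) rules out $s \in \{i, k\}$: $s = i$ forces $j = k$ (violating $\mu_{jj}=0$), and $s = k$ gives $\eta_{kk} = 0$. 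Hence $s = j$, which yields $\eta_{jk} \neq 0$ and $\mu_{ij} \neq 0$, so Theorem~\ref{thm.nspace} identifies each of $B_1, B_2$ as a trivial $3$-space extension on $A_{ijk}$, proving (1b). For (1a), assume $k = i$: the same commutator analysis produces a required index $s$ with $\eta_{si} \neq 0$ and $\mu_{is} \neq 0$, giving trivial $3$-space extensions on $A_{sij}$ in opposite directions and forcing $\lambda_1^3 = \lambda_2^3 = 1$ via Example~\ref{ex.qas}. The parameter identities of Example~\ref{ex.qas} from the two extensions (in particular $\alpha_{1,s} = p_{si}$ with $p_{sj} = \alpha_{1,s}^2$, and their $B_2$ analogues) yield $(\alpha_{1,s} \alpha_{2,j})^2 = 1$, while the chain relation $\alpha_{1,s} = \lambda_1 \alpha_{1,i}$ combined with $\alpha_{1,i} \alpha_{2,j} = 1$ gives $\alpha_{1,s} \alpha_{2,j} = \lambda_1$, producing $\lambda_1^2 = 1$ and contradicting $m_1 \geq 3$.

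For part (2), $x_1 \cdot u_j = \eta_{ij} u_i$ and $x_2 \cdot u_j = \mu_{kj} u_k$ by Lemma~\ref{lem.xprops}(2). In the subcase $i = k$, combining $g_1 x_1 = \lambda_1 x_1 g_1$ and $g_1 x_2 = \chi_2(g_1) x_2 g_1$ applied to $u_j$ forces $\chi_2(g_1) = \lambda_1$; symmetrically $\chi_1(g_2) = \lambda_2$, and the bosonization relation $\chi_1(g_2) \chi_2(g_1) = 1$ yields $\lambda_1 \lambda_2 = 1$. For $i \neq k$, the quasi-derivation $x_1 \cdot (u_i u_j - p_{ij} u_j u_i) = 0$ forces $\alpha_{1,i} = p_{ij}$ via the $u_i^2$ coefficient (irrespective of whether $x_1 \cdot u_i$ vanishes), and symmetrically $\alpha_{2,k} = p_{kj}$. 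Next, $x_1 \cdot (u_j u_k - p_{jk} u_k u_j) = 0$ yields $p_{ik} = p_{jk} \alpha_{1,k}$ from the $u_k u_i$ coefficient, and the analogous computation with $x_2$ gives $p_{ki} = p_{ji} \alpha_{2,i}$. Substituting $\alpha_{1,k} = \chi_2(g_1) \alpha_{1,j}$ and $\alpha_{2,i} = \chi_1(g_2) \alpha_{2,j}$, then multiplying and using $p_{ik} p_{ki} = 1$ together with $\chi_1(g_2) \chi_2(g_1) = 1$, collapses everything to $\lambda_1 \lambda_2 = 1$. Part (3) follows from (2): if three distinct $x_a, x_b, x_c$ shared a nonzero column, pairwise application of (2) would give $\lambda_a \lambda_b = \lambda_a \lambda_c = \lambda_b \lambda_c = 1$, whence $\lambda_a^2 = 1$, contradicting $m_a \geq 3$.

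The main obstacle is the case analysis underlying part (1), particularly the coordination needed to rule out $k = i$. Getting that step to close requires carefully pairing the $B_1$- and $B_2$-parameter identities dictated by Example~\ref{ex.qas}---which both apply precisely because the derived $3$-space extensions sit on the same subalgebra $A_{sij}$---and the contradiction surfaces only after multiplying out several interleaved parameter relations to reach $\lambda_1^2 = 1$.
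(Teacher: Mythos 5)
Your proof is correct, and for parts (2), (3), and the main ($k\neq i$) branch of part (1) it follows essentially the same route as the paper: the commutator $x_1x_2=\chi_2(g_1)x_2x_1$ applied to $u_k$ produces the auxiliary index $s$, Lemma~\ref{lem.xprops}(4) forces $s=j$, and the $\lambda_1\lambda_2=1$ computation in (2) collapses the same product of $p$'s (the paper quotes $\alpha_j=\lambda_1^{-1}p_{ij}$ from Proposition~\ref{prop.genaction} where you rederive it from the $u_i^2$ coefficient, but this is cosmetic). The one genuine divergence is the exclusion of $k=i$ in part (1). The paper applies the commutator to \emph{both} $u_i$ and $u_j$, obtains two auxiliary indices $\ell,\ell'$, forces $\ell=\ell'$ via Lemma~\ref{lem.xprops}(4), and concludes that $x_1$ contains a $3$-cycle $u_j\to u_i\to u_\ell\to u_j$, contradicting nilpotency of $x_1$. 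You instead apply the commutator only to $u_i$, recognize that $x_1$ and $x_2$ are then forced into the Example~\ref{ex.qas} pattern on $A_{sij}$ in opposite orientations, and derive $\lambda_1^2=1$ from $p_{sj}p_{js}=1$ together with $\alpha_{1,s}\alpha_{2,j}=\lambda_1$. Both arguments are valid; the paper's is shorter and purely combinatorial (one extra commutator computation versus your chain of parameter identities), while yours needs only one commutator and exposes why the configuration fails numerically rather than structurally. I verified your parameter chase: $\alpha_{1,s}=p_{si}$, $\alpha_{1,i}=p_{ij}$, $p_{sj}=\alpha_{1,s}^2$ and their $B_2$ analogues do yield $\lambda_1^2=1$, contradicting $m_1\geq 3$.
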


\begin{proof}
(1) Assume $\eta_{ij},\mu_{jk} \neq 0$.

First, suppose $k=i$. 
If $x_1 \cdot u_i = 0$, then $0 = (x_1 x_2 - \chi_2(g_1) x_2 x_1) \cdot u_i = \mu_{ji} \eta_{ij} u_i$, a contradiction. 
By Lemma~\ref{lem.xprops}~(1, 2), there is some $\ell \neq i,j$ such that $x_1 \cdot u_i = \eta_{\ell i} u_\ell$. 
Similarly, we must have $x_2 \cdot u_\ell \neq 0$, so by Lemma~\ref{lem.xprops}~(1, 3), there is some $m$ such that $x_2 \cdot u_\ell = \mu_{m \ell} u_m$.
From this, and through similar computations for the second, we have
\begin{align*}
 0 &= (x_1 x_2 - \chi_2(g_1) x_2 x_1) \cdot u_i
   = \mu_{ji}\eta_{ij} u_i -\chi_2(g_1)\eta_{\ell i}\mu_{m \ell} u_m, \\
 0 &= (x_1x_2-\chi_2(g_1)x_2x_1) \cdot u_j
   = \mu_{\ell' j}\eta_{m' \ell'} u_{m'} - \chi_2(g_1)\eta_{ij}\mu_{ji} u_j.
\end{align*}
By the assumption that $\mu_{ji} \eta_{ij} \neq 0$, we must have that $m = i$ and $m' = j$.
Since $\eta_{\ell i}, \eta_{j \ell'} \neq 0$, we cannot have $i,j,\ell, \ell'$ all distinct by Lemma~\ref{lem.xprops}~(4).
This forces $\ell=\ell'$, but then $x_1$ is not nilpotent.
We conclude that $k \neq i$.

In general, for $k \neq i$, the same argument shows that there exists $\ell \neq i,k$ such that $\eta_{\ell k},\mu_{i\ell} \neq 0$ and by Lemma~\ref{lem.xprops}~(4), we cannot have $i,j,k,\ell$ all distinct, so $\ell=j$.

(2) Assume $\eta_{ij},\mu_{kj} \neq 0$. We write $g_1 \cdot u_i = \alpha_i u_i$ and $g_2 \cdot u_i = \beta_i u_i$ for all $i$. If $k=i$, then 
\begin{align*}
0 &= (g_2x_1-\chi_1(g_2)x_1g_2) \cdot u_j =\eta_{ij} (\beta_i - \chi_1(g_2) \beta_j) u_i, \\
0 &= (g_1x_2-\chi_2(g_1)x_2g_1) \cdot u_j =\mu_{ij} (\alpha_i - \chi_2(g_1) \alpha_j) u_i.
\end{align*}
By \eqref{eq.coeff}, $\alpha_i = \lambda_1 \alpha_j$ and $\beta_i = \lambda_2 \beta_j$. Hence, $\lambda_1 = \chi_2(g_1)$ and $\lambda_2 = \chi_1(g_2)$, and the result follows.

Next suppose that $k \neq i$.
Then the coefficient of $u_ku_i$ in $x_1 \cdot (u_ju_k-p_{jk}u_ku_j)$ is $\eta_{ij}(p_{ik}-p_{jk}\alpha_k)$ and in $x_2 \cdot (u_iu_j-p_{ij}u_ju_i)$ it is $\mu_{kj}(\beta_ip_{ik}-p_{ij})$.
Thus, $\alpha_k = p_{ik}p_{kj}$ and $\beta_i=p_{ij}p_{ki}$. Now
\begin{align*}
(g_2x_1-\chi_1(g_2)x_1g_2) \cdot u_j 
    &= \eta_{ij}(\beta_i-\chi_1(g_2)\beta_j)u_i, \\
(g_1x_2-\chi_2(g_1)x_2g_1) \cdot u_j
    &= \mu_{jk}(\alpha_k-\chi_2(g_1)\alpha_j)u_k.
\end{align*}
By Proposition \ref{prop.genaction}, $\alpha_j = \lambda_1\inv p_{ij}$ and $\beta_j = \lambda_2\inv p_{kj}$, so $\chi_1(g_2)=\lambda_2\beta_i p_{jk}$ and $\chi_2(g_1)=\lambda_1\alpha_k p_{ji}$. Now
\[ 1 = \chi_2(g_1)\chi_1(g_2) 
	= \lambda_2\beta_i p_{jk}\lambda_1\alpha_k p_{ji}
	= \lambda_1\lambda_2 (p_{ij}p_{ki})(p_{ik}p_{kj})(p_{jk}p_{ji}) 
	= \lambda_1\lambda_2,\]
as claimed.

(3) Suppose three $x_i$, say $i=1,2,3$, have nonzero entries in the same column. Then by (2), we would have 
$\lambda_1=\lambda_2\inv$, $\lambda_1=\lambda_3\inv$, and $\lambda_2=\lambda_3\inv$,
whence $\lambda_3^2=1$, a contradiction.
\end{proof}

The following result is proved for $t=2$ in Lemma \ref{lem.Brank}.

\begin{theorem}
\label{thm.QLSrank}
Suppose $B=B(G,\gu,\cu)$ has rank $\theta$, and that $B$ acts linearly and inner faithfully on $A=\kk_\bp[u_1,\hdots,u_t]$, $t \geq 2$.
Assume $m_i$ for all $i$ and $\ord(p_{ij})$ for all $i \neq j$ are at least 3. 
Then $\theta \leq 2(t-1)$.
\end{theorem}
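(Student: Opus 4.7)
The case $t=2$ is Lemma~\ref{lem.Brank}, so I will assume $t\ge 3$. The plan is to encode each $x_l$ as a short directed path on the vertex set $\{1,\ldots,t\}$, classify the vertices by how these paths interact across different indices, and then finish with a simple counting argument.

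First I would assemble the structural inputs. By Proposition~\ref{prop.if} (which applies since $m_i\ge 3$), inner faithfulness forces every $x_l$ to act by a nonzero operator on $A$. Since $x_l^{m_l}=0$ in $B$, each $x_l$ acts nilpotently, so Lemma~\ref{lem.diagonal} applied to each sub-Hopf algebra $B_l\iso T_{n_l}(\lambda_l,m_l,0)$ shows that $g_l$ acts diagonally on $A_{(1)}$. Theorem~\ref{thm.nspace} then tells me that each $x_l$ is a trivial extension of a generalized Taft action on some quantum plane $A_{ji}$ or quantum $3$-space $A_{kji}$; I encode these as a single arrow $j\to i$ and a composable pair $k\to j\to i$, respectively.

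Next I would classify the vertices. For $v\in\{1,\ldots,t\}$, let $I(v)$ (resp.\ $O(v)$) denote the set of indices $l$ such that $x_l$ has an arrow ending (resp.\ starting) at $v$. The crucial input is to read Lemma~\ref{lem.genpatch}(1) as a rigidity principle: whenever two \emph{distinct} $x_a,x_b$ carry composable arrows (one ending at $v$, the other starting at $v$), both must be full $2$-arrow paths on a common $3$-space through $v$. Combined with Lemma~\ref{lem.xprops}(4) (which caps each $x_l$ at two arrows), this forces, for every $x_l$, the initial source of its path to satisfy $I(\cdot)=\emptyset$ and the terminal sink to satisfy $O(\cdot)=\emptyset$: any offending arrow from another $x_{l'}$ would either add a third arrow to $x_l$ or contradict $x_l$ being a single-arrow path. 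I may therefore partition vertices into \emph{source-only} ($I=\emptyset$, $O\neq\emptyset$), \emph{sink-only} ($O=\emptyset$, $I\neq\emptyset$), \emph{transit}, and \emph{isolated}, with every $x_l$'s initial source lying in the source-only class and every $x_l$'s sink in the sink-only class.

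The counting is then direct. Each $x_l$ contributes exactly one arrow emanating from a source-only vertex (in the $2$-arrow case, the second arrow instead emanates from a transit middle vertex), so
\[
\theta \;=\; \sum_{v\text{ source-only}} |O(v)| \;\le\; 2s,
\]
where $s$ is the number of source-only vertices and the inequality is Lemma~\ref{lem.genpatch}(3). Because every $x_l$ produces a sink-only vertex disjoint from the source-only set, $s\le t-1$, yielding $\theta\le 2(t-1)$. I expect the main obstacle to be the careful derivation of the source-only/sink-only dichotomy from Lemma~\ref{lem.genpatch}(1): the case split over whether $x_l$ is a single- or $2$-arrow path, and the bookkeeping of which arrows from other $x_{l'}$'s can coexist with $x_l$'s endpoints, is the delicate step. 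Once the vertex classification is in place, the rest is a clean bipartite degree argument.
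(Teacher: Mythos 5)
Your proof is correct, and it rests on the same structural skeleton as the paper's own argument: the directed graph on the generators, Lemma~\ref{lem.diagonal} and Theorem~\ref{thm.nspace} to reduce each $x_l$ to a one- or two-arrow path, and Lemma~\ref{lem.genpatch} to control how the paths of distinct indices interact. Where you genuinely diverge is the final count. The paper counts \emph{arrows} (one per ordered pair of vertices), asserts $\theta\le\Gamma_1$, bounds $\Gamma_1\le 2t$ by out-degree, and subtracts for length-two paths; you instead count the \emph{indices} $l$, grouped by the unique source-only vertex at which the path of $x_l$ begins, getting $\theta=\sum_{v}|O(v)|\le 2s\le 2(t-1)$. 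Your bookkeeping is the more robust of the two: because $|O(v)|$ counts indices rather than arrows, it correctly handles two distinct $x_a,x_b$ carrying the very same arrow $j\to i$ (which Lemma~\ref{lem.genpatch}(2) permits, forcing only $\lambda_a=\lambda_b^{-1}$). This is not hypothetical --- in the sharpness example following Theorem~\ref{thm.QLSrank} every arrow $u_{k+1}\to u_1$ is carried by two distinct $x$'s, so the simple graph there has $\Gamma_1=t-1$ while $\theta=2(t-1)$, and the paper's inequality $\theta\le\Gamma_1$ only survives if arrows are counted with multiplicity over $l$. Your derivation of the source-only/sink-only dichotomy from Lemma~\ref{lem.genpatch}(1) is sound, and the closing step $s\le t-1$ via the existence of at least one sink-only vertex is exactly right; the only blemish is an attribution slip, since the two-arrow cap on each $x_l$ comes from Theorem~\ref{thm.nspace} (equivalently the combination of all parts of Lemma~\ref{lem.xprops}), not from Lemma~\ref{lem.xprops}(4) alone.
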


\begin{proof}
Let $\Gamma$ be a directed graph with $t$ vertices $v_1,\hdots,v_t$ corresponding to the generators of $A$. We draw an arrow from $v_j$ to $v_i$ if the $(i,j)$ entry of some $x_k$ is nonzero.
Let $\Gamma_1$ denote the number of arrows in $\Gamma$.
It is clear that $\theta \leq \Gamma_1$.

By Lemma \ref{lem.genpatch} (3), a vertex may not be the source of more than two arrows, and so $\Gamma_1 \leq 2t$.
Lemma \ref{lem.genpatch} (1) implies that $\Gamma$ contains no two cycles, and if there is a path of length two, then some $B_\ell$ acts as a trivial extension of some $A_{ijk}$. That is, two arrows correspond to the same action. 
Now if $\Gamma_1 \leq 2t-2$, we are done, since $\theta \leq \Gamma_1$.
If $\Gamma_1 = 2t-1$, then the target of any arrow is the source of at least one other, giving a path of length 2.
Hence, $\theta \leq \Gamma_1 - 1 = 2t - 2$.
If $\Gamma_1 = 2t$, then the target of any arrow is the source of two others, giving two paths of length 2. 
Hence, $\theta \leq \Gamma_1 - 2 = 2t - 2$.
\end{proof}

Lemma \ref{lem.Brank} shows that the bound in Theorem \ref{thm.QLSrank} is sharp when $t=2$. The next example shows this for $t>2$.

\begin{example}
Let $A=\kk_\bp[u_1,\hdots,u_t]$, with $t$ and $\ord(p_{ij})$ for $i \neq j$ all at least 3. We will construct an action of some $B(G,\gu,\cu)$ of rank $2(t-1)$ on $A$.

First, let $B_k \iso T_{n_k}(\lambda_k,m_k)$, $k=1,\hdots,t-1$, have canonical generators $\{g_k,x_k\}$. We will assume $B_k$ acts as a trivial extension of an action on $A_{1(k+1)}$ with $g_k \cdot u_i = \alpha_{ki} u_i$ for some $\alpha_{ki} \in \kk^\times$ and $x_k \cdot u_{k+1}=u_1$. By definition, $x_k \cdot u_i=0$ for all $i \neq k+1$.
By Proposition \ref{prop.genaction} we must have $\alpha_{i1}=p_{1(i+1)}$.
Furthermore, for $i \neq j$,
\begin{align}
\label{eq.strict1} x_i \cdot (u_{i+1}u_{j+1}-p_{(i+1)(j+1)}u_{j+1}u_{i+1}) &= (p_{1(j+1)}-p_{(i+1)(j+1)}\alpha_{i(j+1)})u_{j+1} u_1, \\
\notag  (g_ix_j-\chi_j(g_i)x_jg_i) \cdot u_{j+1} &=(\alpha_{i1}-\chi_j(g_i)\alpha_{i(j+1)})u_1. 
\end{align}
Hence, we have
\[ \chi_j(g_i)\chi_i(g_j)
    = (\alpha_{i1}\alpha_{i(j+1)}\inv)(\alpha_{j1}\alpha_{j(i+1)}\inv)
    = p_{1(i+1)}(p_{1(j+1)}p_{(i+1)(j+1)}\inv)\inv
    p_{1(j+1)}(p_{1(i+1)}p_{(i+1)(j+1)})\inv = 1.\]
It follows that all compatibility conditions are met amongst the $\{g_k,x_k\}$.

In a similar way, set $B_{k-t+1}' = B_k \iso T_{n_k}(\lambda_k\inv,m_k)$, $k=t,\hdots,2t-2$ and denote the canonical generators by $\{g_k',x_k'\}$. We will assume $B_k'$ acts as a trivial extension of an action on $A_{1(k+1)}$ with $g_k' \cdot u_i = \beta_{ki} u_i$ for some $\beta_{ki} \in \kk^\times$ and $x_k' \cdot u_{k+1}=u_1$ with $x_k' \cdot u_i=0$ for all $i \neq k+1$.
The argument above shows that the compatibility conditions amongst the $B_k'$ are met. It remains to show that the $B_k$ and the $B_k'$ are pairwise compatible. We have
\begin{align*}
(g_ix_j'-\chi_j'(g_i)x_j'g_i) \cdot u_{j+1}
    &= (\alpha_{i1}-\chi_j'(g_i)\alpha_{i(j+1)})u_1 \\
(g_j'x_i-\chi_i(g_j')x_ig_j') \cdot u_{i+1}
    &= (\beta_{j1}-\chi_i(g_j')\beta_{j(i+1)})u_1.
\end{align*}
A computation as in \eqref{eq.strict1} shows that $\beta_{ij}=\alpha_{ij}$ for all $i,j$. Thus, we have
\[ \chi_j'(g_i)\chi_i(g_j') = \alpha_{i1}\alpha_{i(j+1)}\inv\beta_{j1}\beta_{j(i+1)}^{-1}
    = \alpha_{i1}\alpha_{i(j+1)}\inv\alpha_{j1}\alpha_{j(i+1)}^{-1} 
    = 1.
\]
\end{example}

\section{Quantum Matrix Algebras}

  We want to classify actions of $T_n(\lambda, m, 0)$ on $\mc O_q(M_2(\kk))$ with $x$ acting linearly and nonzero.
  To do this, we first note some automorphisms of $\mc O_q(M_N(\kk))$.
  First, let $\mc H$ denote the group $(\kk^\times)^{2N-1}$. 
  Each element $(a_1, \ldots, a_N, b_1, \ldots, b_{N-1})$ of $\mc H$ gives a unique automorphism of $\mc O_q(M_N(\kk))$ by 
  \[
    Y_{ij} \mapsto 
      \begin{cases} 
        a_i b_j Y_{ij}, & (j<N) \\ 
        a_i Y_{ij}, & (j=N). 
      \end{cases}
  \]
  Equivalently, if $\alpha_{ij} \in \kk$ is defined by $g \cdot Y_{ij} = \alpha_{ij} Y_{ij}$, then the matrix $(\alpha_{ij})$ forms an $N \times N$ matrix of rank 1 with no zero entries.
  For example, if $N=2$ and $g \in \mc H$, then
    \[
      g \cdot A = \alpha_{11} A \quad \quad
        g \cdot B = \alpha_{12} B \quad \quad
        g \cdot C = \alpha_{21} C \quad \quad
        g \cdot D = \alpha_{22} D,
    \]
    for $\alpha_{ij} \in \kk^\times$.
    
  The transposition map, $\tau$, given by $Y_{ij} \mapsto Y_{ji}$, gives another automorphism of $\mc O_q(M_N(\kk))$.
  It was conjectured in \cite{ll-aoqm} that if $q$ is not a root of unity, then $\Aut(\mc O_q(M_N(\kk))) \cong \mc H \rtimes \langle \tau \rangle$, and this conjecture was proven in its entirety in \cite{y-llc}.
  
  \begin{remark} \label{rem:iso}
    Note that if $h \cdot a$ denotes an action of a Hopf algebra $H$ on an associative $\kk$-algebra $A$, and $\phi: A \to B$ is a $\kk$-algebra isomorphism, then $h \circ b \defeq \phi ( h \cdot \phi^{-1}(b))$ gives an action of $H$ on $B$.
    We will use this fact frequently, with either $\phi = \tau$ or $\phi: \mc O_q(M_N(\kk)) \to \mc O_{q^{-1}}(M_N(\kk))$ being the map $Y_{ij} \mapsto Y_{(N+1-i)(N+1-j)}$.
  \end{remark}


\begin{lemma} \label{lem:2matrixdiag}
  Let $q \neq \pm 1$ and assume $m \geq 3$.
  If $T_n(\lambda, m, 0)$ acts on $\mc O_q(M_2(\kk))$ with $x$ acting linearly and nonzero, and $g$ acting as an element of $\mc H \rtimes \langle \tau \rangle$, then $g$ must act as an element of $\mc H$, i.e. diagonally on the basis $(A, B, C, D)$ of $\mc O_q(M_2(\kk))_{(1)}$.
\end{lemma}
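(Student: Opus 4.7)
Suppose for contradiction that $g$ acts as $h\tau$ for some $h \in \mc H$. Then on the basis $(A, B, C, D)$ of $\cO_q(M_2(\kk))_{(1)}$ we have
\[
g \cdot A = \alpha_A A, \quad g \cdot D = \alpha_D D, \quad g \cdot B = \alpha_B C, \quad g \cdot C = \alpha_C B
\]
for scalars $\alpha_A, \alpha_B, \alpha_C, \alpha_D \in \kk^\times$. Write $x \cdot Y = \sum_{Z} \eta_{ZY} Z$ for $Y, Z \in \{A, B, C, D\}$. My plan is to combine the skew-commutator relation $gx = \lambda xg$ with the requirement that $x$ preserve the defining relations of $\cO_q(M_2(\kk))$ until every $\eta_{ZY}$ vanishes, contradicting the hypothesis that $x$ acts nonzero.

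First I analyze $gx = \lambda xg$ applied to each basis vector. Comparing coefficients and using $\lambda^2 \neq 1$ (which holds because $m \geq 3$), I conclude that every diagonal entry $\eta_{YY}$ vanishes; that the entire $B$-$C$ block of $x$ vanishes (so $x \cdot B, \, x \cdot C \in \kk A \oplus \kk D$); and that entries come in compatible ``swap pairs,'' for instance $\eta_{AB} = 0$ if and only if $\eta_{AC} = 0$, $\eta_{DB} = 0$ if and only if $\eta_{DC} = 0$, and likewise for the rows labeled $B$ and $C$. This pins $x$ down to at most a handful of free scalars with tight proportionality constraints.

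Next I apply $x$ to the relation $AB - qBA = 0$. Expanding via the $(g,1)$-skew-primitive rule and putting the result in the PBW normal form $\{A^i B^j C^k D^\ell\}$ --- which requires the shear $DA = AD - (q - q^{-1}) BC$ --- then setting every monomial coefficient to zero yields $\eta_{BA} = \eta_{CA} = \eta_{DA} = \eta_{DB} = 0$, whence $x \cdot A = 0$. Applying $x$ to $BD - qDB = 0$ with $x \cdot A = 0$ in hand similarly gives $\eta_{BD} = \eta_{CD} = \eta_{AD} = 0$; here the $BC$ coefficient, produced by the shear on $DA$ and nonzero because $q - q^{-1} \neq 0$ (using $q \neq \pm 1$), supplies the crucial equation that forces $\eta_{AB} = 0$ as well. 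A final expansion of $x \cdot (CD - qDC) = 0$ then kills $\eta_{AC}$, leaving $x = 0$ and the desired contradiction.

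The main obstacle is computational bookkeeping: each expansion $x \cdot (RS - qSR)$ produces several monomials that must be rewritten in PBW normal form before one may compare coefficients, and one must track how repeated applications of the shear on $DA$ produce $BC$ terms whose nonvanishing, guaranteed by $q \neq \pm 1$, supplies the final linear equations that annihilate every entry of $x$.
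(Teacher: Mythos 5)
Your proposal is correct and follows essentially the same route as the paper's proof: use $gx=\lambda xg$ with $\lambda^2\neq 1$ to kill the diagonal and the $B$\nobreakdash-$C$ block and to pair off the remaining entries, then expand $x\cdot(AB-qBA)$ and $x\cdot(BD-qDB)$ in the PBW basis (tracking the $BC$ term produced by $DA=AD-(q-q^{-1})BC$) to annihilate everything else. The only cosmetic difference is that the paper dispatches the last coefficient $\eta_{AC}$ via the swap-pairing with $\eta_{AB}$ rather than by a further expansion of $x\cdot(CD-qDC)$.
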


  \begin{proof}
      Let $(\eta_{ij})$ give the action of $x$ on the generators, i.e. $(\eta_{ij})$ is the matrix representing the action of $x$ on the basis $(A,B,C,D)$ of the $1$-graded piece.
    
    Suppose $g$ does not act as an element of $\mc H$, so by the assumption that $g \in \mc H \rtimes \langle \tau \rangle$,
    \[
      g \cdot A = \alpha_{11} A \quad \quad
        g \cdot B = \alpha_{21} C \quad \quad
        g \cdot C = \alpha_{12} B \quad \quad
        g \cdot D = \alpha_{22} D,
    \]
    for $(\alpha_{ij})_{i,j}$ a rank one matrix with no zero entries.
    On the ordered basis $(A,B,C,D)$, we have that $gx - \lambda xg$ is given by
    \[
      \left(
        \begin{array}{cccc}
          \eta_{11} \alpha_{11} (1 - \lambda) 
            & (\eta_{12} \alpha_{11} - \lambda \eta_{13} \alpha_{21})
            & \eta_{13} \alpha_{11}  - \lambda \eta_{12} \alpha_{12}
            & \eta_{14} (\alpha_{11} -\lambda \alpha_{22}) \\
          \eta_{31} \alpha_{12} - \lambda \eta_{21} \alpha_{11}
            & \eta_{32} \alpha_{12} - \lambda \eta_{23} \alpha_{21}
            & \alpha_{12} (\eta_{33} - \lambda \eta_{22}) 
            & \eta_{34} \alpha_{12} - \lambda \eta_{24} \alpha_{22} \\
          \eta_{21} \alpha_{21} - \lambda \eta_{31} \alpha_{11}
            & \alpha_{21} (\eta_{22} - \lambda \eta_{33})
            & \eta_{23} \alpha_{21} - \lambda \eta_{32} \alpha_{12}
            & \eta_{24}  \alpha_{21} - \lambda \eta_{34} \alpha_{22} \\
          \eta_{41} (\alpha_{22} - \lambda \alpha_{11})
            & \eta_{42} \alpha_{22} - \lambda \eta_{43} \alpha_{21}
            & \eta_{43} \alpha_{22} - \lambda \eta_{42} \alpha_{12} 
            & \eta_{44} \alpha_{22} (1 - \lambda)
        \end{array}
      \right).
    \]
    Since this must be the zero matrix, we have that $\eta_{11} = \eta_{44} = 0$.
    Moreover, the following pairs are either both zero or both nonzero:
    \[
      (\eta_{12},\eta_{13}), \quad (\eta_{21},\eta_{31}), \quad (\eta_{22},\eta_{33}), \quad (\eta_{23},\eta_{32}), \quad (\eta_{24},\eta_{34}), \quad (\eta_{42},\eta_{43}).
    \]
    If $\eta_{23} \neq 0$, then $\eta_{32} = \frac {\lambda \alpha_{21}}{\alpha_{12}} \eta_{23}$ and $\eta_{23} = \frac {\lambda \alpha_{12}}{\alpha_{21}} \eta_{32}$.
    Therefore, $\lambda^2 = 1$, a contradiction to $m \neq 2$.
    Hence, $\eta_{23} = \eta_{32} = 0$.
    Similarly, we have $\eta_{22} = \eta_{33} = 0$.
    Now note that $(A^2, AB, AC, AD, B^2, BC, BD, C^2, CD, D^2)$ is a basis for $\mc O_q(M_2(\kk))_2$.
    The $B^2$ coefficient of $x \cdot (AB - qBA)$ is $\eta_{21}$.
    Thus, $\eta_{21} = \eta_{31} = 0$.
    The $BD$ coefficient is $\eta_{41} q^{-1}$, so $\eta_{41} = 0$ as well, giving $x \cdot A = 0$.
    The $BC$ coefficient is now $(q^2 - 1) \eta_{42}$.
    Since $q^2 \neq 1$, we have $\eta_{42} = \eta_{43} = 0$.
    The coefficients of $AB, \ B^2,$ and $BC$ in $x \cdot (BD - qDB)$ are, respectively, $-q \eta_{14}$, $-q \eta_{24}$, and $\alpha_{12} \eta_{24} + (q^2 - 1) \alpha_{22} \eta_{12} - q \eta_{34}$.
    Thus, as above, we get that $\eta_{14} = \eta_{24} = \eta_{34} = \eta_{12} = \eta_{13} = 0$.
    Therefore, $x$ acts by zero, a contradiction.
  \end{proof}
  
  Before classifying linear actions of $T_n(\lambda, m, 0)$ on $\mc O_q(M_2(\kk))$ in general, we consider a special case.
  
   \begin{example} \label{ex:ordqis3}
   \label{ex.qorder3}
  Let $q \in \kk$ with $\ord(q)=3$. Also assume that $m \geq 3$.
  The following give actions of $T_n(\lambda, m, 0)$ on $\mc O_q(M_2(\kk))$. 
  (The action of $g$ is specified as an element of $\mc H$, i.e. a matrix of rank one $(\alpha_{ij})_{i,j}$ so that $g \cdot Y_{ij} = \alpha_{ij} Y_{ij}$.)
  \begin{align*}
    \text{(1)} & \quad g = \begin{pmatrix}1 & q^{-1} \\ q^{-1} & q \end{pmatrix}, \quad 
      x \cdot D = \gamma A, \quad 
      x \cdot A = \delta B + \epsilon C, \quad 
      (\gamma, \delta, \epsilon \in \kk ; \ \lambda = q^2) \\
    \text{(2)} & \quad g = \begin{pmatrix}q^{-1}  & q   \\ q      & 1\end{pmatrix}, \quad
      x \cdot A = \gamma D, \quad
      x \cdot D = \delta B + \epsilon C, \quad
      (\gamma, \delta, \epsilon \in \kk ; \ \lambda = q^{-2})
  \end{align*}
  \end{example}
  
  \begin{proposition} \label{prop:matrixactions}
    Let $q \neq \pm 1$ and assume $m \geq 3$.
    Then $T_n(\lambda, m, 0)$ acts on $\mc O_q(M_2(\kk))$ with $x$ acting linearly and nonzero, and $g$ acting as an element of $\mc H \rtimes \langle \tau \rangle$, if and only if     
    \begin{itemize}
    \item $\lambda = q^{\pm 2}$ and $\ord(q) | n$, or
    \item $\lambda = q^{\pm 4}$ and $\ord(q^2) | n$.
    \end{itemize}
    The actions are given as in Example~\ref{ex:ordqis3} (with $\gamma, \delta, \epsilon$ not all zero) and Table~\ref{table.mat2}.
  \end{proposition}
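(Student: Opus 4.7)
The plan is to leverage Lemma~\ref{lem:2matrixdiag} first, which reduces us to the case that $g$ acts diagonally on the basis $(A,B,C,D)$ as an element of $\mc H$. Write $g \cdot A = \alpha_{11} A$, $g \cdot B = \alpha_{12} B$, $g \cdot C = \alpha_{21} C$, $g \cdot D = \alpha_{22} D$ with the rank-one condition $\alpha_{11}\alpha_{22} = \alpha_{12}\alpha_{21}$, i.e.\ $\alpha_{ij} = a_i b_j$ for some scalars $a_i, b_j \in \kk^\times$. Let $(\eta_{ij})$ be the matrix of $x$ on the ordered basis $(A,B,C,D)$. The condition $gx = \lambda xg$ applied to each generator forces, for every nonzero entry $\eta_{ij}$, the scalar equation $\alpha_i = \lambda \alpha_j$ (where $\alpha_k$ denotes the $g$-eigenvalue of the $k$-th basis vector). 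Combined with $m \geq 3$, the argument of Lemma~\ref{lem.gxcom} yields $\eta_{ii} = 0$ and $\eta_{ij}\eta_{ji} = 0$ for all $i,j$.

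Next, I would impose the six defining relations of $\mc O_q(M_2(\kk))$ by computing $x \cdot R$ for each $R \in \{AB - qBA,\ AC - qCA,\ BD - qDB,\ CD - qDC,\ BC - CB,\ AD - DA - (q-q\inv)BC\}$ and extracting the coefficient of each degree-2 basis monomial. This gives a linear system in the $\eta_{ij}$ with coefficients that are polynomials in the $\alpha_{k\ell}$ and $q$. The key observations will be: (i) the relation $x \cdot (BC - CB) = 0$ combined with $\eta_{ij}\eta_{ji} = 0$ kills most of the ``cross'' entries between the $B,C$ columns; (ii) the $BC$-coefficient of $x \cdot (AD - DA - (q - q^{-1})BC)$ forces $\eta_{23} = \eta_{32} = 0$ and significantly constrains $\eta_{i1}$ and $\eta_{i4}$; and (iii) each surviving pattern of nonzero $\eta_{ij}$ forces a specific ratio among the $\alpha_{k\ell}$, which in turn pins $\lambda$ down to a power of $q$.

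I would then case-split on which column of $(\eta_{ij})$ is nonzero. Together with the rank-one factorization $\alpha_{ij} = a_i b_j$, the constraint $\alpha_i = \lambda \alpha_j$ for a nonzero $\eta_{ij}$ becomes either a ``row shift'' $b_j = \lambda b_\ell$ or a ``column shift'' $a_i = \lambda a_k$, and the quadratic relations of $\mc O_q(M_2(\kk))$ force the shift ratio to be $q^{\pm 1}$, hence $\lambda = q^{\pm 2}$. In this generic situation, the explicit actions recorded in Table~\ref{table.mat2} arise: $x$ shifts a single row or column of the matrix of generators. In the degenerate case $\ord(q) = 3$, the scalar $q - q\inv$ coincides with $q^2 - q = -q^2(1 - q)$ and additional balance equations become solvable, allowing $x$ to connect $A$ and $D$ directly; a direct verification produces exactly the two families in Example~\ref{ex:ordqis3}, with $\lambda = q^{\pm 4} = q^{\mp 2}$ as operator eigenvalues. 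The constraints $\ord(q) \mid n$ or $\ord(q^2) \mid n$ then follow from requiring $g^n = 1$ as an automorphism, i.e.\ $\alpha_{ij}^n = 1$ for all $i,j$ with the explicit forms determined above.

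The main obstacle I anticipate is the case analysis in the second paragraph: the six quadratic relations, each yielding up to ten coefficient equations, produce a dense and somewhat redundant system, and one must be careful not to miss sporadic solutions in which several $\eta_{ij}$ are simultaneously nonzero. Organizing the work by which of the four columns of $(\eta_{ij})$ are nonzero, and exploiting the rank-one structure $\alpha_{ij} = a_i b_j$ early, should keep the bookkeeping manageable. The converse direction --- verifying that the actions in Example~\ref{ex:ordqis3} and Table~\ref{table.mat2} genuinely satisfy the Taft relations $g^n = 1$, $x^m = 0$, $gx = \lambda xg$ and are compatible with the defining relations of $\mc O_q(M_2(\kk))$ --- is a direct computation using the explicit formulas.
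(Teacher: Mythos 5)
Your plan follows essentially the same route as the paper's proof: reduce to diagonal $g$ via Lemma~\ref{lem:2matrixdiag}, exploit $\eta_{ij}\eta_{ji}=0$ from the skew-commutation relation together with the coefficient equations coming from the six defining relations of $\mc O_q(M_2(\kk))$, case-split on which generators $x$ moves (using Remark~\ref{rem:iso} to cut down the cases), and treat $\ord(q)=3$ separately via Example~\ref{ex:ordqis3}. One minor bookkeeping slip: it is the $B^2$ and $C^2$ coefficients (not the $BC$ coefficient) of $x\cdot(AD-DA-(q-q^{-1})BC)$ that force $\eta_{23}=\eta_{32}=0$, but this does not affect the viability of the argument.
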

    
  Noting that $\mc O_q(M_2(\kk)) \cong \mc O_{q^{-1}}(M_2(\kk))$ via $A \mapsto D, \ B \mapsto C, \ C \mapsto B, \ D \mapsto A$, we see that by changing $q$, we can assume that $\lambda = q^2$ or $\lambda = q^4$, with the action coming from the corresponding list.
  
  \begin{table}[h]
\caption[mat2]{Actions of $T_n(\lambda, m, 0)$ on $\mc O_q(M_2(\kk))$. 
The second column lists the value of $\lambda$ in terms of $q$ while the third indicates the action of $g$ as an element of $\mc H$, i.e. a matrix of rank one $(\alpha_{ij})_{i,j}$ so that $g \cdot Y_{ij} = \alpha_{ij} Y_{ij}$.
The fourth column indicates how $x$ acts on the generators with $\delta, \epsilon \in \kk$.
We assume the action is trivial if not listed and the last column lists any restrictions on $\delta$ and $\epsilon$.}\label{table.mat2}
\begin{tabular}{|c|c|c|c|c|}
\hline
 & $\lambda$	& action of $g$	& action of $x$	&	restrictions on $\delta,\epsilon$ \\ \hline	

1 & $q^2$ 	& $\begin{pmatrix}q  &  q^{-1} \\ q  &  q^{-1}\end{pmatrix}$	&   
$x \cdot B = \delta A, \ x \cdot D = \delta C$	& $\delta\neq 0$ \\ \hline

2 & $q^2$	& $\begin{pmatrix}q      & q      \\ q^{-1} & q^{-1}\end{pmatrix}$ &
$x \cdot C = \delta A, \ x \cdot D = \delta B$ & $\delta\neq 0$ \\ \hline

3 & $q^2$	& $\begin{pmatrix}q^{-3} & q^{-1}  \\ q^{-1} & q\end{pmatrix}$ &
$x \cdot A = \delta B + \epsilon C$ & $\delta \neq 0 \text{ or } \epsilon \neq 0$ \\ \hline


4 & $q^{-2}$	& $\begin{pmatrix}q      & q^{-1}  \\ q      & q^{-1}\end{pmatrix}$ &
$x \cdot A = \delta B, \ x \cdot C = \delta D$ & $\delta\neq 0$ \\ \hline

5 & $q^{-2}$	& $\begin{pmatrix}q      & q   \\ q^{-1}      & q^{-1}\end{pmatrix}$ &
$x \cdot A = \delta C, \ x \cdot B = \delta D$ & $\delta\neq 0$ \\ \hline

6 & $q^{-2}$	& $\begin{pmatrix}q^{-1}  & q   \\ q      & q^3\end{pmatrix}$ &
$x \cdot D = \delta B + \epsilon C$ & $\delta \neq 0 \text{ or } \epsilon \neq 0$ \\ \hline


7 & $q^4$	& $\begin{pmatrix}q^{-4}  & q^{-2}  \\ q^{-2}  & 1     \end{pmatrix}$ &
$x \cdot A = \delta D$ & $\delta\neq 0$ \\ \hline

8 & $q^{-4}$	& $\begin{pmatrix}1      & q^2    \\ q^2    & q^4\end{pmatrix}$ &
$x \cdot D = \delta A$ & $\delta\neq 0$ \\ \hline
\end{tabular}
\end{table}
  
  \begin{proof}[Proof of Proposition~\ref{prop:matrixactions}]
    It is straightforward to check that each item in Table \ref{table.mat2} indeed gives an action of $T_n(\lambda, m, 0)$ on $\mc O_q(M_2(\kk))$.
    Now assume we have an action with $g$ and $x$ acting according to the hypotheses.
    We show that this action is one of those listed above.
    Let $(\eta_{ij})$ give the action of $x$ on the generators, i.e. $(\eta_{ij})$ is the matrix representing the action of $x$ on the basis $(A,B,C,D)$ of the $1$-graded piece.
    
    By Lemma~\ref{lem:2matrixdiag}, we have that $g$ must act as an element of $\mc H$, i.e. diagonally.
    Thus, by Lemma~\ref{lem.gxcom}, we have $\eta_{ij} \eta_{ji} = 0$ for all $i,j$, and in particular, each $\eta_{ii} = 0$.
    Moreover, the coefficients of $B^2$ and $C^2$ in $x \cdot (AD - DA - (q - q^{-1}) BC)$ are, respectively, $(q^{-1} - q) \alpha_{12} \eta_{23}$ and $(q^{-1} - q) \eta_{32}$.
    Hence, $\eta_{23} = \eta_{32} = 0$.
    
    By Remark~\ref{rem:iso}, we can assume $x \cdot A \neq 0$ or $x \cdot B \neq 0$:
    If $x \cdot C \neq 0$, we can consider instead the action on $\mc O_q(M_2(\kk))$ from Remark~\ref{rem:iso} with $\phi = \tau$, in which $x \cdot B \neq 0$. If, on the other hand, $x \cdot D \neq 0$, we can consider the action on $\mc O_{q^{-1}}(M_2(\kk))$ with $\phi = A \mapsto D, B \mapsto C, C \mapsto B, D \mapsto A$, in which $x \cdot A \neq 0$.
    First, suppose $x \cdot B \neq 0$, so $\eta_{12} \neq 0$ or $\eta_{42} \neq 0$.
    By Remark~\ref{rem:iso} again, we can assume $\eta_{12} \neq 0$.
    Then by \eqref{eq.coeff}, $\lambda = \frac{\alpha_{11}}{\alpha_{12}}$.
    Also, since the coefficient of $A^2$ in $x \cdot (AB - qBA)$ is $\eta_{12}(\alpha_{11} - q)$, we get that $\alpha_{11} = q$.
    Similarly, the coefficient of $AC$ in $x \cdot (BC - CB)$ is $\eta_{12} (1 - q^{-1} \alpha_{21})$, so $\alpha_{21} = q$.
    Finally, the coefficient of $AD$ in $x \cdot (BD - qDB)$ is $\eta_{12} (1 - q \alpha_{22})$, so $\alpha_{22} = q^{-1}$.
    Using the fact that $g$ has rank one, we have $\alpha_{12} = q^{-1}$ as well and $\lambda = \frac{\alpha_{11}}{\alpha_{12}} = q^2$.
    Since we also assume $\lambda^2 \neq 1$, by \eqref{eq.coeff}, we have $\eta_{13} = \eta_{21} = \eta_{24} = \eta_{31} = \eta_{41} = \eta_{42} = \eta_{43} = 0$.
    The $A^2$ and $AC$ coefficients of $x \cdot (AD - DA - (q-q^{-1})BC)$ are, respectively, $\eta_{14}(\alpha_{11} - 1)$ and $\eta_{34}(\alpha_{11} - q^{-1}) - (q-q^{-1})\eta_{12}$.
    This yields $\eta_{14} = 0$ and $\eta_{34} = \eta_{12}$.
    Thus, $g$ and $x$ act according to the first case of $\lambda = q^2$ in Table~\ref{table.mat2}.
    

    Without loss of generality, we can now assume $x \cdot B = x \cdot C = 0$ and $x \cdot A \neq 0$, so at least one of $\eta_{21}, \ \eta_{31},$ and $\eta_{41}$ must be nonzero.
    Suppose that $\eta_{41} \neq 0$.
    Then by \eqref{eq.coeff}, $\lambda = \frac{\alpha_{22}}{\alpha_{11}}$.
    The coefficients of $D^2$ in $x \cdot (AD - DA - (q - q^{-1})BC)$, $CD$ in $x \cdot (AC - qCA)$, and $BD$ in $x \cdot (AB - qBA)$ are, respectively, $\eta_{41}(1 - \alpha_{22})$, $\eta_{41}(q^{-1} - q \alpha_{21})$ and $\eta_{41} (q^{-1} - q \alpha_{12})$.
    Thus, we have $\alpha_{12} = \alpha_{21} = q^{-2}$, $\alpha_{22} = 1$, and by the fact that $g$ has rank one as a matrix in $\mc H$, we have $\alpha_{11} = q^{-4}$ and $\lambda = q^4$.
    From \eqref{eq.coeff} and our assumptions that $q^2 \neq 1$ and $\lambda^2 \neq 1$, we then know $\eta_{21} = \eta_{31} = \eta_{14} = 0$.
    The $B^2$ coefficient of $x \cdot (BD - qDB)$ and the $C^2$ coefficient of $x \cdot (CD - qDC)$ are respectively $\eta_{24}(q^{-2} - q)$ and $\eta_{34}(q^{-2} - q)$.
    If $q^3 \neq 1$, we have $\eta_{24} = \eta_{34} = 0$, in which case $x \cdot D = 0$ and $x \cdot A = \eta_{41} D$, so $g$ and $x$ act according to the case $\lambda = q^4$ in Table~\ref{table.mat2}.
    If on the other hand, $q^3 = 1$, then $g$ and $x$ act according to Example~\ref{ex:ordqis3}~(2).
     

    
    Now assume $x \cdot B = x \cdot C = 0$, and $x \cdot A = \eta_{21} B + \eta_{31} C \neq 0$.
    Moreover, by the above paragraph and Remark~\ref{rem:iso}, we can assume $\eta_{14} = 0$.
    In the case $\eta_{21} \neq 0$, by \eqref{eq.coeff}, $\lambda = \frac{\alpha_{12}}{\alpha_{11}}$.
    Also, the $B^2$ coefficient of $x\cdot(AB - qBA)$, the $BC$ coefficient of $x\cdot(AC - qCA)$, and the $BD$ coefficient of $x\cdot(AD-DA-(q-q^{-1})BC)$ are, respectively, $\eta_{21}(1 - q \alpha_{12})$, $\eta_{21}(1 - q \alpha_{21})$, and $\eta_{21}(1 - q^{-1}\alpha_{22})$.
    Hence, we have $\alpha_{12} = \alpha_{21} = q^{-1}$ and $\alpha_{22} = q$.
    By the fact that $g$ has rank 1, we also have $\alpha_{11} = q^{-3}$ and hence that $\lambda = q^2$.
    We obtain the same result in the case $\eta_{31} \neq 0$.
    Also, in either case, \eqref{eq.coeff} yields $\eta_{24} = \eta_{34} = 0$, so $g$ and $x$ act according to the final case of $\lambda = q^2$ in Table~\ref{table.mat2}.
  \end{proof}

\begin{corollary}
  Let $q \neq \pm 1$ and assume $m \geq 3$. 
  Then $T_n(\lambda, m, 0)$ acts inner faithfully on $\mc O_q(M_2(\kk))$ with $x$ acting linearly and nonzero, and $g$ acting as an element of $\mc H \rtimes \langle \tau \rangle$, if and only if 
  \begin{itemize}
      \item $\lambda = q^{\pm 2}$ and $\ord(q) = n$, or
      \item $\lambda = q^{\pm 4}$ and $\ord(q^2) = n$.
  \end{itemize}
\end{corollary}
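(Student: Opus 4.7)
The plan is to combine Proposition~\ref{prop:matrixactions}, which enumerates all candidate actions, with the principle observed at the end of the proof of Proposition~\ref{prop.genaction}: for a rank-one pointed Hopf algebra like $T_n(\lambda,m,0)$, once one knows the action is nontrivial on $x$ (which is assumed here), inner faithfulness is equivalent to the statement that the grouplike $g$ acts on $\cO_q(M_2(\kk))$ by an automorphism of order exactly $n$. Indeed, if $\ord(g \cdot {-}) = n' < n$, then the Hopf ideal generated by $g^{n'}-1$ is nonzero and annihilates the algebra, so the action factors through $T_{n'}(\lambda,m,0)$ and is not inner faithful; conversely, if $g$ acts faithfully and $x\cdot A\ne 0$, Lemma~\ref{lem:primsinideals} (applied in the same spirit as in the proof of Proposition~\ref{prop.if}) rules out any nonzero Hopf ideal acting by zero.

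With this reduction in hand, I would walk through each line of Table~\ref{table.mat2} and each item of Example~\ref{ex:ordqis3}, reading the eigenvalues of $g$ straight off the rank-one matrix $(\alpha_{ij})$, and compute $\ord(g\cdot {-})$ as the least common multiple of the orders of these four scalars. For the six cases with $\lambda = q^{\pm 2}$ (Cases 1--6), the entries of $(\alpha_{ij})$ lie in $\{q^{\pm 1}, q^{\pm 3}\}$, and in every case the lcm of their orders equals $\ord(q)$; hence $g$ acts faithfully iff $\ord(q)=n$. For the two cases with $\lambda = q^{\pm 4}$ (Cases 7 and 8), the entries of $(\alpha_{ij})$ lie in $\{1, q^{\pm 2}, q^{\pm 4}\}$, giving $\ord(g\cdot{-})=\ord(q^2)$ and hence the condition $\ord(q^2)=n$. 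The sporadic actions of Example~\ref{ex:ordqis3} occur only when $\ord(q)=3$; there the entries of $(\alpha_{ij})$ lie in $\{1,q,q^{-1}\}$, so $\ord(g\cdot{-})=\ord(q)=3$, which falls under the first bullet already.

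Putting these computations together yields the forward direction: if the action is inner faithful, we land in one of the listed actions and $n$ equals the order of $g$ as an automorphism, forcing $\ord(q)=n$ (resp.\ $\ord(q^2)=n$). For the converse, given the equality on orders one picks any of the actions in Table~\ref{table.mat2} compatible with that row of $\lambda$ (say taking $\delta=1$, $\epsilon=0$) and observes that $g$ then generates a cyclic group of the required order $n$ while $x$ is nonzero, so by the principle above the action is inner faithful.

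The only real subtlety, and the step I expect to need the most care, is the lcm bookkeeping for Cases 3 and 6, where the matrix $(\alpha_{ij})$ contains a $q^{\pm 3}$ entry together with $q^{\pm 1}$ entries: one must check that $\ord(q^3)$ does not inflate $\ord(g\cdot {-})$ beyond $\ord(q)$, which holds since $\ord(q^3)\mid \ord(q)$. Beyond that the proof is purely organizational case-by-case verification, with no new ideas required beyond Proposition~\ref{prop:matrixactions} and the faithfulness-of-$g$ criterion.
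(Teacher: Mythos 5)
Your proposal is correct and is essentially the argument the paper intends: the corollary is stated without proof immediately after Proposition~\ref{prop:matrixactions}, and the intended justification is exactly your reduction of inner faithfulness (given $x$ acting nonzero) to faithfulness of the $g$-action, combined with reading off the order of the diagonal matrix representing $g$ in each row of Table~\ref{table.mat2} and in Example~\ref{ex:ordqis3}. Your order computations (including the $q^{\pm 3}$ entries in rows 3 and 6 and the $\ord(q^2)$ count in rows 7 and 8) check out, so nothing further is needed.
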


  It is possible to ``patch'' the actions of Proposition~\ref{prop:matrixactions} together to get actions of bosonizations of higher rank quantum linear spaces.
  \begin{example} \label{ex:rank3matrixaction}
    Let $q \in \kk$ be a fifth root of unity and let $G = (\ZZ_5)^3$ with generators $g_1, g_2, g_3$.
    Also, let $\chi_1, \chi_2, \chi_3 \in \wh G$ be defined by
    \begin{equation} \label{eq:b3pair}
      \begin{array}{lll}
         \chi_1(g_1) = q^2, & \chi_1(g_2) = 1, & \chi_1(g_3) = q^{-2}, \\
         \chi_2(g_1) = 1, & \chi_2(g_2) = q^2, & \chi_2(g_3) = q^{-2}, \\
         \chi_3(g_1) = q^2, & \chi_3(g_2) = q^2, & \chi_3(g_3) = q^{-4}.
      \end{array}
    \end{equation}
    Note that $\cu$ and $\gu$ satisfy the necessary conditions to form a quantum linear space $\mc R( \gu, \cu)$.
    The bosonization of this quantum linear space with  the group algebra $\kk G$, namely $B(G, \gu, \cu)$, is generated by grouplike elements $\{g_1, g_2, g_3\}$ and $(g_i, 1)$-skew primitive elements $x_i$ subject to the relations of $G$ and
    \[
      g_i x_j = \chi_j(g_i) x_j g_i, \quad x_i x_j = \chi_j(g_1) x_j x_i.
    \]
    There is an action of $B(G, \gu, \cu)$ on $\mc O_q(M_2(\kk))$ specified by 
    \begin{enumerate}[label=(\arabic*)]
       \item $B_1 \cong T_{5}(q^2, 5, 0)$ acts as in Table \ref{table.mat2} (1) with $\delta_1 \in \kk^\times$ arbitrary,
       \item $B_2 \cong T_{5}(q^2, 5, 0)$ acts as in Table \ref{table.mat2} (2) with $\delta_2 \in \kk^\times$ arbitrary, and
       \item $B_3 \cong T_{5}(q^4, 5, 0)$ acts as in Table \ref{table.mat2} (8) with $\delta_3 \in \kk^\times$ arbitrary.
    \end{enumerate} 
    To see that this indeed defines an action, we need only verify that $g_i x_j - \chi_j(g_i) x_j g_i$ and $x_i x_j - \chi_j(g_1) x_j x_i$ act by zero for $i \neq j$.
    Representing these elements as matrices on the basis $(A, B, C, D)$ of $\mc O_q(M_2(\kk))_1$, one easily verifies that these matrices satisfy the necessary relations.
  \end{example}

  We show in the next theorem that this is the most ``patching'' that can be done for such actions.
  
  \begin{theorem} \label{thm.mat2}
    Let $q \neq \pm 1$.
    Also, let $B(G,\gu, \cu)$ be a bosonization of rank $\theta$ with $m_i \neq 2$ for all $i$.
    Suppose $B(G,\gu, \cu)$  acts on $\mc O_q( M_2(\kk))$ with each $g_i$ acting as an element of $\mc H \rtimes \langle \tau \rangle$ and each $x_i$ acting linearly and nonzero.
    Then, $\theta \leq 3$.
  \end{theorem}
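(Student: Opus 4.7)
My plan is to reduce the theorem to a finite combinatorial check on the possible ``types'' of actions of a single generalized Taft algebra on $\mc O_q(M_2(\kk))$. By Proposition~\ref{prop:matrixactions}, each $B_i = \langle g_i, x_i \rangle \iso T_{n_i}(\lambda_i, m_i, 0)$ acts via one of the eight types recorded in Table~\ref{table.mat2}, or (when $\ord(q) = 3$) one of the two combined actions of Example~\ref{ex:ordqis3}. For two such actions to coexist as part of a bosonization $B(G,\gu,\cu)$, the relation $x_i x_j = \chi_j(g_i) x_j x_i$ must hold on $\mc O_q(M_2(\kk))$; because $x_i, x_j$ act as skew derivations and $\chi_i(g_j)\chi_j(g_i) = 1$ in any quantum linear space, a short Leibniz-rule calculation shows it suffices to verify this on the four-dimensional degree-one piece $\mc O_q(M_2(\kk))_{(1)} = \Span_\kk(A,B,C,D)$. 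Representing each $x_i$ by its $4 \times 4$ matrix $X_i$ in the basis $(A,B,C,D)$, the condition becomes $X_iX_j = \chi_j(g_i) X_jX_i$; equivalently $X_iX_j$ and $X_jX_i$ must be scalar multiples of one another. The theorem thus reduces to showing that the resulting compatibility graph on the list of types has maximum clique of size three.

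The first step is a direct matrix calculation. I would write each $X_i$ as a sum of elementary matrices $E_{ab}$ (for instance $X_1 \propto E_{12}+E_{34}$, $X_8 \propto E_{14}$, and $X_3 = \delta E_{21}+\epsilon E_{31}$) and use $E_{ab}E_{cd} = \delta_{bc}E_{ad}$ to compute all pairwise products. The expected pattern is that three ``opposite pairs'' $(T_1,T_4)$, $(T_2,T_5)$, $(T_7,T_8)$ yield $X_iX_j$ and $X_jX_i$ supported on disjoint diagonal positions and are therefore incompatible; that the two ``aligned triples'' $\{T_1,T_2,T_8\}$ (all shifting matrix entries toward $A$) and $\{T_4,T_5,T_7\}$ (all shifting toward $D$) commute pairwise, matching Example~\ref{ex:rank3matrixaction}; and that for the mixed types $T_3, T_6$ (each carrying two parameters $\delta, \epsilon$), compatibility with any of $T_1, T_2, T_4, T_5, T_7, T_8$ forces at least one of these parameters to vanish, cutting those types off from enough of the graph to preclude extending either triple. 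The case $\ord(q) = 3$ of Example~\ref{ex:ordqis3} is handled in the same framework by observing that each of those actions decomposes as a sum of a table-type action and a rank-one piece, so inserting one into a putative quadruple imposes the constraints of both summands and still fails.

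The second step is to rule out cliques of size four: an extension of $\{T_1,T_2,T_8\}$ or $\{T_4,T_5,T_7\}$ by an opposite-direction type is blocked by step one, while extension by $T_3$ or $T_6$ would require simultaneously $\delta = \epsilon = 0$, which is forbidden by the nonvanishing hypothesis. The main obstacle is the sheer bookkeeping of the $\binom{8}{2} = 28$ matrix products with associated parameter subcases. A secondary but necessary check is that the two maximal cliques do arise from honest bosonizations: namely, that the diagonal forms of the $g_i$ prescribed in Table~\ref{table.mat2} satisfy the quantum linear space condition $\chi_i(g_j)\chi_j(g_i) = 1$ for every pair inside each triple, which is a direct calculation already implicit in the verification of Example~\ref{ex:rank3matrixaction}. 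Combined with the two steps above, this yields $\theta \leq 3$, with equality realized by Example~\ref{ex:rank3matrixaction}.
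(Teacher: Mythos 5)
Your overall architecture is the same as the paper's: invoke Proposition~\ref{prop:matrixactions} to put each $B_i$ into one of the eight types of Table~\ref{table.mat2} (plus the two special actions when $\ord(q)=3$), build a pairwise compatibility graph on the types, observe that the only candidate $4$-cliques are $\{1,2,6,8\}$ and $\{3,4,5,7\}$, and kill each by noting that the two-parameter type ($6$ resp.\ $3$) would be forced to have $\delta=\epsilon=0$. The paper records the pairwise conditions in Table~\ref{table:m2compat} and draws exactly the graph you describe, and your matrix-product verification of the edges and of the parameter-vanishing constraints is a legitimate way to reproduce the entries of that table that you actually need.

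There is, however, a genuine gap in your compatibility criterion. You test only the relation $x_ix_j=\chi_j(g_i)x_jx_i$, weakened to ``$X_iX_j$ and $X_jX_i$ are proportional,'' and then bound $\theta$ by the clique number of a graph whose vertices are the eight \emph{types}. This implicitly assumes that no two $B_i$ can act by the same type, which your criterion cannot detect: if $B_i$ and $B_j$ both act as type $1$, then $X_i$ and $X_j$ are both proportional to $E_{12}+E_{34}$, whose square is zero, so $X_iX_j=X_jX_i=0$ and your proportionality test is vacuously passed --- yet your bound would then fail (one could stack arbitrarily many copies of type $1$). The actual obstruction comes from the relations you omit: $g_ix_j=\chi_j(g_i)x_jg_i$ forces $\chi_j(g_i)=q^2=\chi_i(g_j)$ for two type-$1$ actions, and the quantum linear space axiom $\chi_i(g_j)\chi_j(g_i)=1$ then gives $q^4=1$, hence $\lambda_i=q^2=-1$ and $m_i=2$, contradicting the hypothesis. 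This is precisely why the paper's Table~\ref{table:m2compat} records conditions for all three relations $x_ix_j=\zeta_{ji}x_jx_i$, $g_ix_j=\zeta_{ji}x_jg_i$, $g_jx_i=\zeta_{ji}^{-1}x_ig_j$, and why its diagonal entries are all ``---''. You need to add the $g$--$x$ relations (which pin down $\zeta_{ji}$) together with $\zeta_{ij}\zeta_{ji}=1$ to your check; once you do, repeated types are excluded and your clique argument goes through. Two smaller inaccuracies, which do not affect the conclusion: the pairs within $\{T_1,T_2,T_8\}$ do not all literally commute ($\zeta_{ji}=q^{\pm2}$ for the pairs involving $T_8$; the matrix products just happen to be $0=0$), and your claim that compatibility of $T_3$ or $T_6$ with any of the other six types forces a parameter to vanish is false for the pairs $(T_6,T_8)$ and $(T_3,T_7)$, which are compatible (when $q^6=1$) with no parameter restriction --- but the constraints you actually use, from $(T_6,T_1)$, $(T_6,T_2)$, $(T_3,T_4)$, $(T_3,T_5)$, are correct.
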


    {
    \setlength{\tabcolsep}{.67em}
    \begin{table}[h]
    \caption{Compatibility of actions of $B_i$ and $B_j$ on $\mc O_q(M_2(\kk))$. If the action of $B_i \cong T_{n_i}(\lambda_i, m_i, 0)$ and $B_j \cong T_{n_j}(\lambda_j, m_j, 0)$ (with generators $g_i, \ x_i$ and $g_j, \ x_j$ respectively) contained in $B(G, \gu, \cu)$ are specified by Proposition~\ref{prop:matrixactions} corresponding to the number in the first row and column respectively, the conditions of the table are necessary and sufficient for the relations $x_i x_j = \zeta_{ji} x_j x_i$, $g_i x_j = \zeta_{ji} x_j g_i$, and $g_j x_i = \zeta_{ji}^{-1} x_i g_j$ to hold. 
    (Here, $\zeta_{ji} = \chi_j(g_i)$.)
    The symbol --- means the actions are always incompatible.} \label{table:m2compat}
    \begin{tabular}{|c|c|c|c|c|c|c|c|c|} 
      \hline
          & 1
          & 2
          & 3
          & 4
          & 5
          & 6
          & 7
          & 8 \\
      \hline
        1
          & ---
          & $\zeta_{ji} = 1$
          & ---
          & ---
          & $\zeta_{ji} = 1$
          & \begin{tabular}{@{}c@{}} 
            $\zeta_{ji} = q^{-2}$, \\ 
            $\delta_i = 0$
            \end{tabular}
          & ---
          & $\zeta_{ji} = q^{-2}$ \\
      \hline
        2
          & $\zeta_{ji} = 1$
          & ---
          & ---
          & $\zeta_{ji} = 1$
          & ---
          & \begin{tabular}{@{}c@{}} 
            $\zeta_{ji} = q^{-2}$, \\ 
            $\epsilon_i = 0$
            \end{tabular}
          & ---
          & $\zeta_{ji} = q^{-2}$ \\
      \hline
        3
          & ---
          & ---
          & ---
          & \begin{tabular}{@{}c@{}} 
            $\zeta_{ji} = q^{-2}$, \\ 
            $\epsilon_j = 0$
            \end{tabular}
          & \begin{tabular}{@{}c@{}} 
            $\zeta_{ji} = q^{-2}$, \\ 
            $\delta_j = 0$
            \end{tabular}
          & $\zeta_{ji} = q^{2}$
          & \begin{tabular}{@{}c@{}}
            $\zeta_{ji} = q^{-4}$, \\
            $q^6 = 1$
            \end{tabular}
          & --- \\
      \hline    
        4
          & ---
          & $\zeta_{ji} = 1$
          & \begin{tabular}{@{}c@{}}
            $\zeta_{ji} = q^2$, \\
            $\epsilon_i = 0$
            \end{tabular}
          & ---
          & $\zeta_{ji} = 1$
          & ---
          & $\zeta_{ji} = q^{2}$
          & --- \\
      \hline   
        5
          & $\zeta_{ji} = 1$
          & ---
          & \begin{tabular}{@{}c@{}}
            $\zeta_{ji} = q^2$, \\
            $\delta_i = 0$
            \end{tabular}
          & $\zeta_{ji} = 1$
          & ---
          & ---
          & $\zeta_{ji} = q^{2}$
          & --- \\
      \hline   
        6
          & \begin{tabular}{@{}c@{}}
            $\zeta_{ji} = q^2$, \\
            $\delta_j = 0$
            \end{tabular}
          & \begin{tabular}{@{}c@{}}
            $\zeta_{ji} = q^2$, \\
            $\epsilon_j = 0$
            \end{tabular}
          & $\zeta_{ji} = q^{-2}$
          & ---
          & ---
          & ---
          & ---
          & \begin{tabular}{@{}c@{}}
            $\zeta_{ji} = q^4$, \\
            $q^6 = 1$
            \end{tabular} \\
      \hline   
        7
          & ---
          & ---
          & \begin{tabular}{@{}c@{}}
            $\zeta_{ji} = q^4$, \\
            $q^6 = 1$
            \end{tabular}
          & $\zeta_{ji} = q^{-2}$
          & $\zeta_{ji} = q^{-2}$
          & ---
          & ---
          & --- \\
      \hline  
        8
          & $\zeta_{ji} = q^{2}$
          & $\zeta_{ji} = q^{2}$
          & ---
          & ---
          & ---
          & \begin{tabular}{@{}c@{}}
            $\zeta_{ji} = q^{-4}$, \\
            $q^6 = 1$
            \end{tabular}
          & ---
          & --- \\
      \hline  
    \end{tabular}
    \end{table}
    }
  
  \begin{proof}
    By Proposition~\ref{prop:matrixactions}, each $B_i$ must act by one of the eight actions of Table~\ref{table.mat2} (or the two actions of Example~\ref{ex:ordqis3} in the case that $\ord(q) = 3$).
    Conditions for compatibility of the actions from Table~\ref{table.mat2} are specified in Table~\ref{table:m2compat}, the contents of which follow from basic computations.
    The table must be symmetric of course (after switching $i \leftrightarrow j$).
    Also, Remark~\ref{rem:iso} minimizes the calculations needed.
    
    For the actions of Example~\ref{ex:ordqis3}, note that if $\gamma = 0$ or $\delta = \epsilon = 0$, the action reduces to one of those in Table~\ref{table.mat2}.
    Thus, for the sake of finding compatibility of actions, we can assume for each of those that $\gamma \neq 0$ and at least one of $\delta, \epsilon$ is nonzero. Simple calculations show that the first is compatible only with action $(6)$ from the table, while the second action is only compatible with $(3)$.
    Thus, since we want to show $\theta \leq 3$, we need not consider these cases any longer and focus solely on those actions in Table~\ref{table.mat2}.

    We now use Table~\ref{table:m2compat} to show that $\theta \leq 3$.
    We construct an undirected graph with eight vertices corresponding to the action ``types'' in Table \ref{table:m2compat} and exactly one edge between vertices if there is a compatible action between those two types. 
    If there is no compatible action, we draw no edge between those two vertices.
    This gives the following.
  \[
  \xymatrix{
  (2) \ar@{-}[d] \ar@{-}[dr] \ar@{-}@/_{2em}/[dd] \ar@{-}[rrr] &     &       & (4) \ar@{-}[d] \ar@{-}@/^{2em}/[dd] \\
  (8) \ar@{-}[d] \ar@{-}[r] & (1) \ar@{-}[r] & (5) \ar@{-}[r]  \ar@{-}[dr] \ar@{-}[ur] & (7) \ar@{-}[d]\\
  (6) \ar@{-}[ur] \ar@{-}[rrr] &     &       & (3)
  }
  \]
  An action of rank 1 corresponds to a vertex.
  A possible action of rank 2 corresponds to an edge (assuming the compatibility conditions of Table~\ref{table:m2compat} are satisfied).
  A possible action of rank 3 corresponds to a triangle, but not all triangles are valid.
  A possible action of rank 4 corresponds to a $K_4$ subgraph and there are only two of these in the graph.
  One has vertices $(1),(2),(6),(8)$ and the other has vertices $(3),(4),(5),(7)$. 
  We note that ruling out just one of these cases will suffice by Remark~\ref{rem:iso}.
  
  Suppose a rank 4 bosonization $B$ acts on $\mc O_q(M_2(\kk))$ with $B_1$, $B_2$, $B_3$, and $B_4$ acting as (1), (2), (6), and (8) respectively. 
  Then using Table~\ref{table:m2compat}, we must have $\delta_3 = \epsilon_3 = 0$, since the action of $B_3$ must be compatible with both the action of $B_1$ and $B_2$.
  In that case, $x_3$ acts by zero, a contradiction.
  This shows there are no rank 4 actions and hence the highest rank of $B$ is 3.
  \end{proof}

We now turn our attention to the more difficult case of $\mc O_q(M_N(\kk))$ with $N \geq 3$.

\begin{lemma}\label{lem:matrixdiag}
Let $q \neq \pm 1$ and $N,m \geq 3$.
If $T_n(\lambda, m, 0)$ acts on $\mc O_q(M_N(\kk))$ with $x$ acting linearly and nonzero, and $g$ acting as an element of $\mc H \rtimes \langle \tau \rangle$, then $g$ must act as an element of $\mc H$, i.e. diagonally on the basis $(Y_{i,j})$ of $\mc O_q(M_N(\kk))$.
\end{lemma}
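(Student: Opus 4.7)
The plan is to generalize the proof of Lemma~\ref{lem:2matrixdiag} from $N=2$ to arbitrary $N \geq 3$. Suppose for contradiction that $g$ acts as $\sigma \tau$ for some $\sigma \in \mc H$, so $g \cdot Y_{ij} = \alpha_{ij} Y_{ji}$ with $(\alpha_{ij})_{i,j=1}^N$ a rank-one matrix of nonzero entries. Write the linear action of $x$ on the basis of $\mc O_q(M_N(\kk))_{(1)}$ as $x \cdot Y_{kl} = \sum_{i,j} \eta^{ij}_{kl} Y_{ij}$; the goal is to show every $\eta^{ij}_{kl} = 0$, contradicting $x \neq 0$.

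First I would extract constraints from $gx = \lambda xg$. Applying $(gx-\lambda xg)$ to $Y_{kl}$ and reading off the coefficient of each $Y_{ab}$ yields the pairing
\[
\eta^{ba}_{kl}\,\alpha_{ba} \;=\; \lambda\, \alpha_{kl}\, \eta^{ab}_{lk}
\qquad (a,b,k,l \in \{1,\ldots,N\}).
\]
Specializing $a=b=k=l=p$ gives $\eta^{pp}_{pp}=0$, and iterating the pairing on its two index orderings, then using $\lambda^2 \neq 1$ (from $m \geq 3$) together with the rank-one structure of $(\alpha_{ij})$, forces vanishing of the ``symmetric'' pairs $(\eta^{ij}_{ji},\eta^{ji}_{ij})$ and $(\eta^{ii}_{jj},\eta^{jj}_{ii})$ exactly as the pairs $(\eta_{23},\eta_{32})$ and $(\eta_{22},\eta_{33})$ were eliminated in the proof of Lemma~\ref{lem:2matrixdiag}.

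Next I would apply $x$ to the four families of quantum matrix relations and, using $q \neq \pm 1$, read off the remaining vanishing. The cleanest organization is corner-by-corner: for each pair $1 \leq i < j \leq N$ the subspace $\Span_\kk\{Y_{ii},Y_{ij},Y_{ji},Y_{jj}\}$ is preserved by $g$ and generates a subalgebra isomorphic to $\mc O_q(M_2(\kk))$, so reading off coefficients of pure corner degree-two monomials in $x$ applied to the internal row, column, commutation, and Manin relations reproduces the calculation of Lemma~\ref{lem:2matrixdiag} and kills the intra-corner entries of $x$. The entries of $x$ mixing a corner generator with an outside generator would be handled next, using the bridging row/column relations $Y_{ij}Y_{il}=qY_{il}Y_{ij}$ and $Y_{ij}Y_{kj}=qY_{kj}Y_{ij}$ together with the Manin-type relation $Y_{ii}Y_{jj}-Y_{jj}Y_{ii}=(q-q^{-1})Y_{ij}Y_{ji}$, by extracting coefficients of off-corner degree-two monomials and propagating the previously obtained zeros.

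The main obstacle is the combinatorial bookkeeping. There are $N^4$ entries $\eta^{ij}_{kl}$ and on the order of $N^4$ quadratic relations, and unlike the $N=2$ case the cross-corner entries genuinely couple relations tying two distinct $2\times 2$ corners; each such cross-corner $\eta$ can a priori contribute to monomials that straddle both corners. The delicate part will be sequencing the eliminations so that every relation is invoked only once the $\eta$'s it constrains have been pinned down by earlier steps. I expect that the cleanest write-up first proves a separate lemma capturing all vanishing forced by $gx=\lambda xg$ alone (the off-diagonal analogue of Lemma~\ref{lem.gxcom}), and then performs a corner-then-bridge sweep through the quadratic relations to conclude.
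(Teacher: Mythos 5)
Your overall strategy is sound and is, at bottom, the same as the paper's: first derive the pairing $\eta_{ij}^{ba}\,\alpha_{ba}=\lambda\,\alpha_{ij}\,\eta_{ji}^{ab}$ from $gx=\lambda xg$ (this is exactly the paper's \eqref{eq:matNxcoeffcond}), then apply $x$ to the quadratic relations and extract coefficients of degree-two PBW monomials. Where you differ is the organization. You sweep the $2\times 2$ corners $\Span_\kk\{Y_{ii},Y_{ij},Y_{ji},Y_{jj}\}$ first and then attack cross-corner entries with bridging relations; the paper instead classifies the entries $\eta_{ij}^{k\ell}$ by the relative grid position of the target $(k,\ell)$ to the source $(i,j)$ (same position, strictly NW/NE/SW/SE, same row, same column) and --- crucially for the bookkeeping you worry about --- exploits the symmetry of the pairing ($\eta_{ij}^{ba}=0$ iff $\eta_{ji}^{ab}=0$) to reduce to targets lying on or above the diagonal through the source, which roughly halves the case analysis and makes the sequencing painless. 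Your corner-first reduction is legitimate: a component $Y_{k\ell}$ of $x\cdot Y_{ab}$ with $(k,\ell)$ outside the corner cannot contribute to a pure-corner degree-two basis monomial even after PBW reordering, because the Manin correction to $Y_{ab}Y_{k\ell}$ is $Y_{a\ell}Y_{kb}$, which is pure-corner only if $(k,\ell)$ already is. So the projected system really is the $N=2$ system and Lemma~\ref{lem:2matrixdiag} transfers. The cross-corner stage that you leave as a sketch is where most of the content lies for $N\geq 3$, but the relations you name (same-row, same-column, and the Manin relation) are the right ones, and the paper's proof confirms that the sweep closes.

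One concrete error to fix: the pair $(\eta^{ii}_{jj},\eta^{jj}_{ii})$ is \emph{not} killed by the commutation relation together with $\lambda^2\neq 1$. Since $g$ fixes $Y_{ii}$ and $Y_{jj}$ up to scalars, the pairing yields only the single equation $\eta^{ii}_{jj}(\alpha_{ii}-\lambda\alpha_{jj})=0$; there is no two-step iteration producing a factor $1-\lambda^2$, and the rank-one condition does not force $\alpha_{ii}\neq\lambda\alpha_{jj}$. These entries are the analogues of $\eta_{14},\eta_{41}$ in Lemma~\ref{lem:2matrixdiag}, which the paper eliminates only later, via the $AB$ and $BD$ coefficients of the quadratic relations. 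The pairs that genuinely die at the commutation stage are $(\eta^{ij}_{ij},\eta^{ji}_{ji})$ (the analogue of $(\eta_{22},\eta_{33})$) and $(\eta^{ji}_{ij},\eta^{ij}_{ji})$ (the analogue of $(\eta_{23},\eta_{32})$), precisely because $g$ swaps $Y_{ij}$ and $Y_{ji}$. The slip is harmless to your plan --- your corner sweep recovers $\eta^{ii}_{jj}$ and $\eta^{jj}_{ii}$ anyway --- but it must be corrected before you use those vanishings to seed the later eliminations.
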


\begin{proof}
  Let $\mc N = \{1, 2, \ldots, N\}$.
  First, we write $x \cdot Y_{ij} = \sum_{(a,b) \in \mc N^2} \eta_{ij}^{ab} Y_{ab}$.
  Throughout this proof, we will be using the basis $\{Y_{ij} Y_{k \ell} \mid i < k \text{ or } (i=k \text{ and } j < \ell) \}$ of $\mc O_q(M_N(\kk))_{(2)}$, and will often refer to the $Y_{ij}$ coefficient of a term using this basis.
  Suppose $g$ does not act as an element of $\mc H$, so it must act as $g \cdot Y_{ij} = \alpha_{ij} Y_{ji}$ for $\alpha_{ij} \in \kk^\times$.
  We will show that $x$ must act by zero.
  We have that 
  \begin{equation} \label{eq:matNxcoeffcond}
    0 = (gx - \lambda xg) \cdot Y_{ij} = \sum_{(a,b) \in \mc N^2} \left[ \eta_{ij}^{ba} \; \alpha_{ba} - \lambda \; \alpha_{ij} \; \eta_{ji}^{ab} \right] Y_{ab},
  \end{equation}
  giving that $\eta_{ij}^{ba} = 0$ if and only if $\eta_{ji}^{ab} = 0$ for integers $0 < a,b,i,j \leq N$ satisfying $a \neq b$ or $j \neq i$.
  Therefore, it will suffice to show that $\eta_{ij}^{k\ell} = 0$ whenever $i - j \geq k - \ell$, i.e. when $Y_{k \ell}$ lies on or above the diagonal containing $Y_{ij}$.
  We now show this in steps, using results from earlier steps in later ones without further mention.
  
  \noindent \underline{$k = i, \ \ell = j$:} 
  From \eqref{eq:matNxcoeffcond}, we have $\eta_{ij}^{ij} = \lambda \eta_{ji}^{ji} = \lambda^2 \eta_{ij}^{ij}$.
  Since $\lambda^2 \neq 1$, we must have $\eta_{ij}^{ij} = 0$.

  \noindent \underline{$k < i, \ \ell > j$:} 
  If $k \neq \ell$, then the coefficient of $Y_{k \ell}^2$ in $x \cdot (Y_{ij} Y_{k \ell} - Y_{k \ell} Y_{ij})$ is  $\eta_{ij}^{k \ell}$.
  If $k \neq j$, then the coefficient of $Y_{kj} Y_{k \ell}$ in $x \cdot (Y_{kj} Y_{ij} - q Y_{ij} Y_{kj})$ is $- \eta_{ij}^{k \ell}$.
  
  \noindent \underline{$k < i, \ \ell < j$:} 
  If $k \neq j$, the coefficient of $Y_{k \ell} Y_{k j}$ in $x \cdot (Y_{k j} Y_{i j} - q Y_{i j} Y_{k j})$ is $-q \eta_{ij}^{k \ell}$.
  If $i \neq \ell$, the coefficient of $Y_{k \ell} Y_{i \ell}$ in $x \cdot (Y_{i \ell} Y_{ij} - q Y_{ij} Y_{i \ell})$ is $-q \eta_{ij}^{k \ell}$.
  
  \noindent \underline{$k > i, \ \ell > j$:} 
  Follows similarly to the previous step.
  
  \noindent \underline{$k = i, \ \ell > j$:} 
  If $i \neq \ell$, the coefficient of $Y_{i \ell}^2$ in $x \cdot (Y_{ij} Y_{i \ell} - q Y_{i \ell} Y_{ij})$ is $\eta_{ij}^{i \ell}$.
  If $i = \ell$, the coefficient of $Y_{ji} Y_{ij}$ in $x \cdot (Y_{j j} Y_{i j} - q Y_{i j}Y_{j j})$ is $\eta_{jj}^{ji} - q \alpha_{ij} \eta_{jj}^{ij} + (q^2 - 1) \eta_{ij}^{ii}$. 
  By the case that $i \neq \ell$ and \eqref{eq:matNxcoeffcond}, we have $\eta_{jj}^{ji} = \eta_{jj}^{ij} = 0$.
  Thus, we have $\eta_{ij}^{ii} = 0$ in this case as well.
  
  \noindent \underline{$k < i, \ \ell = j$:} 
  Follows similarly to the previous step.
\end{proof}
  
\begin{table}
\caption[matn]{Actions of $T_n(\lambda, m, 0)$ on $\mc O_q(M_N(\kk))$. 
The second column lists the value of $\lambda$ in terms of $q$ while the third indicates the action of $g$ as an element of $\mc H$, i.e. a matrix of rank one $(\alpha_{ij})_{i,j}$ so that $g \cdot Y_{ij} = \alpha_{ij} Y_{ij}$.
The fourth column indicates how $x$ acts on the generators with $\delta \in \kk^\times$.
We assume the action is trivial if not listed.}\label{table.matn}
\begin{tabular}{|c|c|c|c|} \hline
& $\lambda$ & action of $g$	& action of $x$ \\ \hline	

1 & $q^2$ 	& $\begin{smatrix}
          1      & \cdots & 1      & q      & q^{-1} & 1      & \cdots & 1 \\
          1      & \cdots & 1      & q      & q^{-1} & 1      & \cdots & 1 \\
          \vdots &        & \vdots & \vdots & \vdots & \vdots &        & \vdots \\
          1      & \cdots & 1      & q      & q^{-1} & 1      & \cdots & 1 \\
        \end{smatrix}$ &   
$x \cdot Y_{a,b} = \delta \; Y_{a, b-1} \forall a$	\\
& &  ($q^{-1}$ at column $b$, $b>1$) & \\ \hline

2 & $q^2$	& 
$\begin{smatrix}
          1      & 1      & \cdots & 1 \\
          \vdots & \vdots &        & \vdots \\
          1      & 1      & \cdots & 1 \\
          q      & q      & \cdots & q \\
          q^{-1} & q^{-1} & \cdots & q^{-1} \\
          1      & 1      & \cdots & 1 \\
          \vdots & \vdots &        & \vdots \\
          1      & 1      & \cdots & 1 \\
\end{smatrix}$ & 
$x \cdot Y_{a,b} = \delta \; Y_{a-1, b} \forall b$ \\
& &  ($q^{-1}$ at row $a$, $a>1$) & \\ \hline

3 & $q^2$	& 
$\begin{smatrix}
          q^{-3} & q^{-2} & \cdots & q^{-2} & q^{-1} \\
          q^{-1} & 1      & \cdots & 1      & q      \\
          \vdots & \vdots &        & \vdots & \vdots  \\
          q^{-1} & 1      & \cdots & 1      & q      \\
        \end{smatrix}$ &
$x \cdot Y_{11} = \delta \; Y_{1N}$ \\ \hline
    
4 & $q^2$	& 
$\begin{smatrix}
          q^{-3}  & q^{-1} & \cdots & q^{-1} \\
          q^{-2}  & 1      & \cdots & 1      \\
          \vdots  & \vdots &        & \vdots \\
          q^{-2}  & 1      & \cdots & 1      \\
          q^{-1}  & q      & \cdots & q      \\
        \end{smatrix}$ &
$x \cdot Y_{11} = \delta \; Y_{N1}$ \\ \hline

5 & $q^{-2}$	& 
$\begin{smatrix}
          1      & \cdots & 1      & q      & q^{-1} & 1      & \cdots & 1 \\
          1      & \cdots & 1      & q      & q^{-1} & 1      & \cdots & 1 \\
          \vdots &        & \vdots & \vdots & \vdots & \vdots &        & \vdots \\
          1      & \cdots & 1      & q      & q^{-1} & 1      & \cdots & 1 \\
        \end{smatrix}$ &
$x \cdot Y_{a,b} = \delta \; Y_{a, b+1} \forall a$ \\
& & ($q$ at column $b$, $b<N$) & \\ \hline

6 & $q^{-2}$	& 
$\begin{smatrix}
          1      & 1      & \cdots & 1 \\
          \vdots & \vdots &        & \vdots \\
          1      & 1      & \cdots & 1 \\
          q      & q      & \cdots & q \\
          q^{-1} & q^{-1} & \cdots & q^{-1} \\
          1      & 1      & \cdots & 1 \\
          \vdots & \vdots &        & \vdots \\
          1      & 1      & \cdots & 1 \\
        \end{smatrix}$ &
$x \cdot Y_{a,b} = \delta \; Y_{a+1, b} \forall b$ \\
& & ($q$ at row $a$, $a<N$) & \\ \hline
    
7 & $q^{-2}$	& 
$\begin{smatrix}
          q^{-1} & \cdots & q^{-1} & q      \\
          1      & \cdots & 1      & q^2    \\
          \vdots &        & \vdots & \vdots \\
          1      & \cdots & 1      & q^2    \\
          q      & \cdots & q      & q^3    \\
        \end{smatrix}$ &
$x \cdot Y_{NN} = \delta \; Y_{1N}$ \\ \hline

8 & $q^{-2}$	& 
$\begin{smatrix}
          q^{-1} & 1      & \cdots & 1      & q      \\
          \vdots & \vdots &        & \vdots & \vdots \\
          q^{-1} & 1     & \cdots  & 1      & q \\
          q      & q^2   & \cdots  & q^2    & q^3   \\
\end{smatrix}$ &
$x \cdot Y_{NN} = \delta \; Y_{N1}$ \\ \hline
\end{tabular}
\end{table}
  
  \begin{proposition}\label{prop:matrixactions2}
    Let $q \neq \pm 1$ and $N,m \geq 3$.
    Then $T_n(\lambda, m, 0)$ acts on $\mc O_q(M_N(\kk))$ with $x$ acting linearly and nonzero, and $g$ acting as an element of $\mc H \rtimes \langle \tau \rangle$, if and only if $\lambda = q^{\pm 2}$ and $\ord(q) \mid n$.
    The actions are given by Table \ref{table.matn}.
 \end{proposition}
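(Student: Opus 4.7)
The plan is to follow the strategy used in the proof of Proposition \ref{prop:matrixactions}, adapted to the larger algebra $\mc O_q(M_N(\kk))$. First, by Lemma \ref{lem:matrixdiag}, I may assume $g$ acts diagonally: write $g \cdot Y_{ij} = \alpha_{ij} Y_{ij}$, where $(\alpha_{ij})$ is a rank-one matrix of nonzero scalars, and expand $x \cdot Y_{ij} = \sum_{k,\ell} \eta_{ij}^{k\ell} Y_{k\ell}$. Applying Lemma \ref{lem.gxcom} to $x$ viewed as an endomorphism of the degree-one piece immediately yields $\eta_{ij}^{ij} = 0$ and the ratio condition $\alpha_{ij}/\alpha_{k\ell} = \lambda$ whenever $\eta_{ij}^{k\ell} \neq 0$, as well as $\eta_{ij}^{k\ell} \eta_{k\ell}^{ij} = 0$.

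Next, I would apply $x$ to each of the four families of defining relations of $\mc O_q(M_N(\kk))$ and expand in the PBW basis $\{Y_{ij} Y_{k \ell} \mid i<k \text{ or } (i = k \text{ and } j < \ell)\}$ of the degree-two piece. Each expansion must vanish coefficientwise, producing a large linear system in the $\eta_{ij}^{k\ell}$'s. Careful bookkeeping, in the spirit of the case analysis in the proof of Lemma \ref{lem:matrixdiag}, will show that under the standing hypotheses $\lambda^2 \neq 1$, $q^2 \neq 1$, and $m \geq 3$, the entries $\eta_{ij}^{k\ell}$ vanish unless $(k,\ell)$ differs from $(i,j)$ by a single row shift, a single column shift, or a diagonal corner-to-corner flip.

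Third, I would fix a nonzero $\eta_{ij}^{k\ell}$ and invoke the isomorphisms from Remark \ref{rem:iso} (the transposition $\tau$ and the anti-diagonal map $Y_{ij} \mapsto Y_{(N+1-i)(N+1-j)}$ which exchanges $q$ and $q^{-1}$) to reduce to a minimal set of representative cases. In each case, the adjacency relations $Y_{ab}Y_{cd} = q Y_{cd} Y_{ab}$ will fix the local values of the $\alpha_{ab}$'s in terms of $q$; the rank-one condition on $(\alpha_{ij})$ then propagates these values globally, forcing $\lambda = q^{\pm 2}$. The condition $g^n = 1$ combined with the values of the $\alpha_{ij}$ yields $\ord(q) \mid n$. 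Matching the resulting data against the eight entries of Table \ref{table.matn} completes the identification, and conversely one checks by direct computation (parallel to Proposition \ref{prop:matrixactions}) that each tabulated action genuinely gives an action of $T_n(\lambda, m, 0)$.

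The hardest step is managing the cross term in the relation $Y_{ij} Y_{\ell m} - Y_{\ell m} Y_{ij} - (q - q^{-1}) Y_{im} Y_{\ell j}$ for $i<\ell$, $j<m$, whose extra summand couples $\eta$-values across the four generators of a $2 \times 2$ minor. This coupling is exactly what forces a row-shift or column-shift to extend \emph{uniformly} across every row or every column, rather than supporting mixed partial shifts; it also pins down the exotic ``corner'' actions (items 3, 4, 7, 8 of Table \ref{table.matn}) via the rank-one constraint on $(\alpha_{ij})$. A secondary challenge is that, unlike the $N=2$ case treated in Proposition \ref{prop:matrixactions}, brute enumeration over all possible $\eta_{ij}^{k\ell}$ is no longer feasible: one must argue uniformly in $N$, using in particular the far-apart commutation relations $Y_{ij} Y_{k\ell} = Y_{k\ell} Y_{ij}$ (for $i<k$, $j>\ell$) to propagate vanishing of coefficients across distant parts of the matrix.
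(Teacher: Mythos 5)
Your outline reproduces the paper's own strategy for Proposition~\ref{prop:matrixactions2} essentially step for step: reduce to diagonal $g$ via Lemma~\ref{lem:matrixdiag}; extract from $gx=\lambda xg$ the constraints $\eta_{ij}^{ij}=0$ and ``$\eta_{ij}^{k\ell}=0$ or $\alpha_{k\ell}=\lambda\alpha_{ij}$''; apply $x$ to the defining relations and read off coefficients in the PBW basis of the degree-two piece to kill all but a short list of entries; use Remark~\ref{rem:iso} to cut the case analysis down; and then pin the $\alpha_{ij}$ to powers of $q$ via the $q$-commutation relations and the rank-one condition, which yields $\lambda=q^{\pm2}$ and $\ord(q)\mid n$. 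Your emphasis on the $(q-q^{-1})Y_{im}Y_{\ell j}$ cross term as the mechanism forcing a row or column shift to propagate uniformly is exactly the role it plays in the paper. One small indexing slip: the skew-commutation relation gives $\alpha_{k\ell}=\lambda\,\alpha_{ij}$ (target over source) when $\eta_{ij}^{k\ell}\neq 0$, not $\alpha_{ij}/\alpha_{k\ell}=\lambda$.

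The one substantive inaccuracy is your description of the surviving long-range coefficients as ``a diagonal corner-to-corner flip.'' For $N\geq 3$ the diagonal flip $Y_{11}\mapsto Y_{NN}$ --- which for $N=2$ is what produces the $\lambda=q^{\pm4}$ actions in rows 7--8 of Table~\ref{table.mat2} --- is impossible: for instance the coefficient of $Y_{2N}Y_{N2}$ in $x\cdot\bigl(Y_{11}Y_{NN}-Y_{NN}Y_{11}-(q-q^{-1})Y_{1N}Y_{N1}\bigr)$ forces $\eta_{11}^{22}=0$, and the vanishing pattern recorded in Figure~\ref{fig:xcoeff} confines the possible targets of a corner generator to its own row and column, leaving only $Y_{11}\mapsto Y_{1N}$ or $Y_{11}\mapsto Y_{N1}$ (and dually for $Y_{NN}$). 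These are precisely items 3, 4, 7, 8 of Table~\ref{table.matn}, and they all still have $\lambda=q^{\pm2}$. Since the content of the proposition for $N\geq3$ is exactly that $\lambda=q^{\pm4}$ does \emph{not} occur (in contrast with $N=2$), this exclusion must be carried out explicitly rather than left open as your sketch does; with that correction the argument coincides with the paper's.
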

  
  \begin{proof}
    Again, it is straightforward to check that each of the rows in Table \ref{table.matn} defines an action on $\mc O_q(M_N(\kk))$. 
    
    By Lemma~\ref{lem:matrixdiag}, we have that $g$ must act diagonally.
    For convenience, we rewrite \eqref{eq.coeff} in this case.
    Let $\mc N = \{1, 2, \ldots, N\}$ again.
    As before, we write $g \cdot Y_{ij} = \alpha_{ij} Y_{ij}$ and $x \cdot Y_{ij} = \sum_{(a,b) \in \mc N^2} \eta_{ij}^{ab} Y_{ab}$.
    From 
    \[
      0 = (gx - \lambda xg) \cdot Y_{ij} = \sum_{(a,b) \in \mc N^2} \eta_{ij}^{ab}(\alpha_{ab} - \lambda \alpha_{ij}) Y_{ab},
    \]
    we see that for each $(i,j), (a,b) \in \mc N^2$,
    \begin{equation} \label{eq:skewcommute1}
      \eta_{ij}^{ab} = 0 \quad \text{or} \quad \alpha_{ab} = \lambda \alpha_{ij}.
    \end{equation}
    In particular, we see that $\eta_{ij}^{ij} = 0$ for any $(i,j) \in \mc N^2$. 
    
    As in Lemma~\ref{lem:matrixdiag}, we will be using the basis $\{Y_{ij} Y_{k \ell} \mid i < k \text{ or } (i=k \text{ and } j < \ell) \}$ of $\mc O_q(M_N(\kk))_{(2)}$, and will often refer to the $Y_{ij}$ coefficient of a term using this basis.
    
    We will say $Y_{ab}$ and $Y_{ij}$ form an $AD$ pair if $a<i$ and $b<j$.
    We say that $Y_{ab}$ and $Y_{ij}$ form a $BC$ pair if $a<i$ and $b>j$, and similarly for other pairs of generators from $\mc O_q(M_2(\kk))$.
    
    Fix a $BC$ pair, $(i,j)$ and $(k,\ell)$. 
    Then the coefficient of $Y_{ij}^2$ in $x \cdot (Y_{i \ell} Y_{kj} - Y_{kj} Y_{i \ell} - (q - q^{-1}) Y_{ij} Y_{k \ell})$ is $(q^{-1} - q) \alpha_{ij} \eta_{k \ell}^{ij}$.
    Therefore, $\eta_{k \ell}^{ij} = 0$.
    Similarly, since the coefficient of $Y_{k \ell}^2$ is $(q^{-1} - q) \eta_{ij}^{k \ell}$, we have $\eta_{ij}^{k \ell} = 0$.
    
    Fix $(a,b)$ and choose $(i,j)$ and $(k, \ell)$ so that
    \begin{itemize}
    \item 
      $(i,j)$ and $(k,\ell)$ form an $AD$ or $DA$ pair,
    \item
      $(a,b)$ and $(k,\ell)$ form any pair besides $AD$ or $DA$, so $Y_{ab} Y_{k \ell} = \gamma Y_{k \ell} Y_{ab}$ for $\gamma \in \kk^\times$, and
    \item
      $(a,b)$ does not form the corresponding $B$ or $C$ for the $AD$ pair above, i.e. $(a,b) \notin \{(i,\ell), (k,j)\}$.
    \end{itemize}
    The coefficient of $Y_{i \ell} Y_{k j}$ in $x \cdot (Y_{ab}Y_{k \ell} - \gamma Y_{k \ell} Y_{ab})$ is $\wh \gamma \eta_{ab}^{ij}$ for some $\wh \gamma \in \kk^\times$, forcing $\eta_{ab}^{ij} = 0$.
    
    Thus, taking the calculations above, we see that for a fixed $(a,b)$, many $\eta_{ab}^{ij}$ must be zero, as shown in Figure~\ref{fig:xcoeff}.
    
    \begin{figure}[ht!]
       \begin{tikzpicture}[x = 3mm, y=3mm]
         \fill (-3,-3) rectangle (4,4);
         \fill[white] (-1,-1) rectangle (2,2);
         \fill[red] (0,0) rectangle (1,1);
         \fill (1,1) rectangle (4,4);
         \fill (1,0) rectangle (4,-3);
         \fill (0,1) rectangle (-3,4);
         \fill (0,0) rectangle (-3,-3);
         \draw (4,4) -- (4,-3) -- (-3,-3) -- (-3,4) -- (4,4);
       \end{tikzpicture}
       \quad
       \begin{tikzpicture}[x = 3mm, y=3mm]
             \fill (-3,-6) rectangle (4,1);
             \fill[white] (-1,-1) rectangle (4,1);
             \fill[red] (0,0) rectangle (1,1);
             \fill (1,0) rectangle (4,-6);
             \fill (0,0) rectangle (-3,-6);
             \draw (4,1) -- (4,-6) -- (-3,-6) -- (-3,1) -- (4,1);
       \end{tikzpicture}
       \quad
       \begin{tikzpicture}[x = 3mm, y=3mm]
         \fill (-6,-6) rectangle (1,1);
         \fill[white] (-1,-1) rectangle (1,1);
         \fill[red] (0,0) rectangle (1,1);
         \fill (0,0) rectangle (-6,-6);
         \draw (1,1) -- (1,-6) -- (-6,-6) -- (-6,1) -- (1,1);
       \end{tikzpicture}
       \quad
       \begin{tikzpicture}[x = 3mm, y=3mm]
         \fill[red] (0,0) rectangle (1,1);
         \fill (2,0) rectangle (7,-6);
         \fill (1,-1) rectangle (7,-6);
         \draw (7,1) -- (7,-6) -- (0,-6) -- (0,1) -- (7,1);
       \end{tikzpicture}
       
       \caption{
       In each case of a location of $(a,b)$, given by the red square, the black squares represent $(i,j)$ such that $\eta_{ab}^{ij}$ must be $0$ from our calculations.
       For example, if $1 < a,b < N$, then $\eta_{ab}^{ij} = 0$ if $(i,j)$ is not horizontally or vertically adjacent to $(a,b)$.
       The cases for the remaining locations of $(a,b)$ are covered by Remark~\ref{rem:iso}.
       } \label{fig:xcoeff}
    \end{figure}
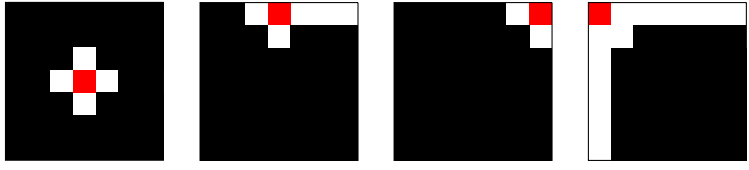
    
    Assume $\eta_{ab}^{cd} \neq 0$ for some $(a,b) \neq (c,d) \in \mc N^2$.
    Choose $(i,j)$ such that 
    \begin{itemize}
    \item 
      $(a,b)$ and $(i,j)$ form any pair besides $AD$ or $DA$, so $Y_{ab}Y_{ij} = \gamma Y_{ij}Y_{ab}$ for $\gamma \in \kk^\times$, and
    \item
      $(c,d)$ and $(i,j)$ form any pair besides $BC$ or $CB$, or altenatively, $(c,d)=(i,j)$.
    \end{itemize}
    Then the coefficient of $Y_{cd} Y_{ij}$ in $x \cdot (Y_{ab} Y_{ij} - \gamma Y_{ij} Y_{ab})$ is $\eta_{ab}^{cd} (1 - \gamma \alpha_{ij} \wh q)$, where
    \begin{align} 
      \wh q &= \begin{cases}\label{eq:qhatgamma}
        1, & \text{$(c,d)$ and $(i,j)$ form an $AD$ or $DA$ pair, or $(c,d) = (i,j)$} \\
        q, & \text{$(c,d)$ and $(i,j)$ form a $BA$ or $CA$ pair} \\
        q^{-1}, & \text{$(c,d)$ and $(i,j)$ form an $AB$ or $AC$ pair,}
      \end{cases} \\
      \gamma &= \begin{cases}
        1, & \text{$(a,b)$ and $(i,j)$ form a $BC$ or $CB$ pair} \\
        q, & \text{$(a,b)$ and $(i,j)$ form an $AB$ or $AC$ pair} \\
        q^{-1}, & \text{$(a,b)$ and $(i,j)$ form a $BA$ or $CA$ pair.}
      \end{cases}
    \end{align}
    Thus, since $\eta_{ab}^{cd} \neq 0$, we have
    \begin{equation} \label{eq:alphavals}
      \alpha_{ij} = (\gamma \wh{q})^{-1}.
    \end{equation}
    On the other hand, if $(i,j)$ is selected so that it forms the same pair with both $(a,b)$ and $(c,d)$, an $AD$, $DA$, $BC$, or $CB$ pair, then the coefficient of $Y_{ij} Y_{cd}$ in $x \cdot (Y_{ij} Y_{ab} - Y_{ab}Y_{ij} - \gamma Y_{ib} Y_{aj})$ is $\eta_{ab}^{ij} ( \alpha_{ij} - 1)$.
    Thus, in this case,
    \begin{equation} \label{eq:alphavalssamepair}
      \alpha_{ij} = 1.
    \end{equation}
    
    Now suppose $x \cdot Y_{ab} \neq 0$ for some $1 < a,b < N$.
    By Remark~\ref{rem:iso}, we can assume without loss of generality that $\eta_{ab}^{a,b-1} \neq 0$.
    By \eqref{eq:skewcommute1}, \eqref{eq:alphavals}, \eqref{eq:alphavalssamepair}, and the fact that $g$ has rank one, we get that $g$ is as in the first case of $\lambda = q^2$ with $b < N$.
    To see that $x$ must act as specified, first consider $c<a$.
    The coefficient of $Y_{c(b-1)} Y_{a(b-1)}$ in $x \cdot (Y_{c(b-1)} Y_{ab} - Y_{ab} Y_{c (b-1)} - (q-q^{-1})Y_{cb}Y_{a(b-1)})$ is $(\alpha_{c(b-1)} - q^{-1}) \eta_{ab}^{a(b-1)} - (q - q^{-1}) \eta_{cb}^{c(b-1)}$.
    Thus, since $\alpha_{c(b-1)} = q$, we have $\eta_{cb}^{c(b-1)} = \eta_{ab}^{a(b-1)}$.
    Similarly, for $c > a$, the coefficient of $Y_{ab} Y_{c(b-1)}$ in $x \cdot (Y_{ab} Y_{cb} - q Y_{cb}Y_{ab})$ is $(q^{-1} - q) (\eta_{cb}^{c(b-1)} - \eta_{ab}^{a(b-1)})$.
    Thus, in this case also, $\eta_{cb}^{c(b-1)} = \eta_{ab}^{a(b-1)}$.
    That all other $\eta$ coefficients must be zero follows from Figure~\ref{fig:xcoeff}, \eqref{eq:skewcommute1}, and our knowledge of $g$.
    
    From now on, we assume $x \cdot Y_{ab} = 0$ for all $1 < a,b < n$.
    We proceed by assuming that $x \cdot Y_{ab} \neq 0$ for some $(a,b)$ matching the red square in the remaining three cases of Figure~\ref{fig:xcoeff}
    By Remark~\ref{rem:iso}, this is sufficient.
    
    Fix $1 < b < N$ and suppose $x \cdot Y_{1b} \neq 0$.
    Since $\eta_{2b}^{2(b-1)} = 0$, the coefficient of $Y_{1b} Y_{2(b-1)}$ in $x \cdot (Y_{1b} Y_{2b} - q Y_{2b}Y_{1b})$ is $(q^2 - 1) \alpha_{2b} \eta_{1b}^{1(b-1)}$, giving that $\eta_{1b}^{1(b-1)} = 0$.
    Suppose $\eta_{1b}^{1d} \neq 0$ for $d > b$.
    Then the coefficient of $Y_{1d}^2$ in $x \cdot (Y_{1b} Y_{1d} - q Y_{1d} Y_{1b})$ is $\eta_{1b}^{1d} (1 - q \alpha_{1d})$, giving that $\alpha_{1d} = q^{-1}$.
    Also, since $\eta_{2b}^{2d} =0$, the coefficient of $Y_{1d} Y_{2b}$ in $x \cdot (Y_{1b} Y_{2b} - q Y_{2b} Y_{1b})$ is $\eta_{1b}^{1d} (1 - q \alpha_{2b})$, so $\alpha_{2b} = q^{-1}$.
    By \eqref{eq:alphavals} and \eqref{eq:alphavalssamepair} respectively, we know $\alpha_{11} = \alpha_{21} = 1$.
    Thus, since $g$ has rank one, we have $\alpha_{1b} = q^{-1}$.
    But then, by \eqref{eq:skewcommute1}, since $\eta_{1b}^{1d} \neq 0$, we have $\lambda = 1$, a contradiction.
    Thus, $\eta_{1b}^{1d} = 0$ for all $d > b$.
    By the second case of Figure~\ref{fig:xcoeff}, we must have $x \cdot Y_{1b} = \delta Y_{2b}$ for some nonzero $\delta \in \kk$.
    By \eqref{eq:alphavals}, \eqref{eq:alphavalssamepair}, and the fact that $g$ has rank one, we get that $g$ is as in the second case of $\lambda = q^{-2}$ with $a = 1$.
    To see that $x$ must act as specified, first consider $d < b$. 
    The coefficient of $Y_{1b} Y_{2d}$ in $x \cdot (Y_{1d} Y_{1b} - qY_{1b}Y_{1d})$ is $(1 - q^2)(\eta_{1d}^{2d} - \delta)$, so $\eta_{1d}^{2d} = \delta$. This is similar for $d>b$.
    All other $\eta$ coefficients must be zero by Figure~\ref{fig:xcoeff}, \eqref{eq:skewcommute1}, and the action of $g$.

    Now, we can assume $x \cdot Y_{ab} = 0$ for all $(a,b) \notin \{(1,1),\ (1,N),\ (N,1),\ (N,N)\}$.
    Since $\eta_{2N}^{2(N-1)} = 0$, the coefficient of $Y_{1N} Y_{2(N-1)}$ in $x \cdot (Y_{1N} Y_{2N} - q Y_{2N} Y_{1N})$ is $(q^2 - 1) \alpha_{2N} \eta_{1N}^{1(N-1)}$, giving $\eta_{1N}^{1(N-1)} = 0$.
    Similarly, 
    $\eta_{1N}^{2N} = 0$.
    Thus, by Figure~\ref{fig:xcoeff}, $x \cdot Y_{1N} = 0$.
    By Remark~\ref{rem:iso}, $x \cdot Y_{N1} = 0$ as well.
    Thus, we can assume $x \cdot Y_{ab} = 0$ for all $(a,b) \notin \{(1,1),\ (N,N)\}$.
    
    Assume $x \cdot Y_{11} \neq 0$.
    Since $N>2$, the coefficient of $Y_{2N} Y_{N2}$ in $x \cdot (Y_{11} Y_{NN} - Y_{NN} Y_{11} - (q - q^{-1}) Y_{1N} Y_{N1})$ is $(q - q^{-1}) \alpha_{NN} \eta_{11}^{22}$, so $\eta_{11}^{22} = 0$.
    Fix $1 < b < N$.
    The coefficient of $Y_{1N} Y_{2b}$ in $x \cdot (Y_{11} Y_{2N} - Y_{2N} Y_{11} - (q - q^{-1}) Y_{1N} Y_{21})$ is $(q - q^{-1}) \alpha_{2N} \eta_{11}^{1b}$, so $\eta_{11}^{1b} = 0$ for $1 < b < N$.
    By Remark~\ref{rem:iso}, $\eta_{11}^{a1} = 0$ for $1 < a < N$ as well.
    Suppose $\eta_{11}^{1N} \neq 0$.
    Then for $1 < a$, the coefficient of $Y_{1N} Y_{aN}$ in $x \cdot (Y_{11} Y_{aN} - Y_{aN} Y_{11} - (q - q^{-1}) Y_{1N} Y_{a1})$ is $\eta_{11}^{1N} (1 - \alpha_{aN} q^{-1})$.
    Thus, $\alpha_{aN} = q$ for $1 < a$.
    Similarly, we see that $\alpha_{a1} = q^{-1}$ for $1 < a$.
    From the above calculations, as well as \eqref{eq:alphavals}, we see that $g$ is as in the third case of $\lambda = q^2$.
    The coefficient of $Y_{21} Y_{N1}$ in $x \cdot (Y_{11} Y_{21} - q Y_{21} Y_{11})$ is $\eta_{11}^{N1} (q^{-1} - 1)$, so $\eta_{11}^{N1} = 0$.
    Thus, $x$ acts as specified.
    
    If instead of $\eta_{11}^{1N} \neq 0$, we have $\eta_{11}^{N1} \neq 0$, this case reduces to the above by Remark~\ref{rem:iso}.
    This exhausts all possibilities of nonzero actions of $x$.
  \end{proof}

As in the case $N=2$, it is possible to ``patch'' the actions of Proposition~\ref{prop:matrixactions2} together to get actions of bosonizations of higher rank quantum linear spaces.

  \begin{example} \label{ex:matrixaction}
    Fix $N \geq 3$.
    Let $q \in \kk$ be a fifth root of unity and let $G = (\ZZ_5)^{2N-2}$ with generators $g_1, g_2, \ldots, g_{2N-2}$.
    Toward defining $\chi_1, \chi_2, \ldots, \chi_{2N-2} \in \wh G$, we first define the set $\mc S \subset \NN^2$ by
    \begin{gather*}
      \mc S = \{(k,\ell) \mid 1 \leq \ell = k-N+1 \leq N-1\} \cup \left\{(k,\ell) \; \middle | \; 2 \leq \ell = k-N+2 \leq N-1, ~ \ell \neq \frac{N+1}2 \right \}
    \end{gather*}
    if $N$ is odd and
     \begin{align*}
      \mc S = & \left\{(k,\ell) \; \middle | \; N+ 1 \leq k \leq N+ \frac N 2 -1, ~ \ell \in \{k-N, k-N+1 \} \right\} \\
      & \bigcup \left\{(k,\ell) \; \middle | \; N + \frac N 2 \leq k \leq 2N-2, ~ \ell \in \{ k - N + 1, k - N + 2 \} \right \}
    \end{align*}
    if $N$ is even.
    
    Now, let $\chi_j \in \wh G$ for $1 \leq j \leq 2N-2$ be defined by
    \[
      \chi_j(g_i) = \begin{cases}
        q^2, & \text{ if } i = j \leq N-1 \\
        q^{-2}, & \text{ if } i = j > N-1 \\
        q, & \text{ if } (j,i) \in \mc S \\
        q^{-1}, & \text{ if } (i,j) \in \mc S \\
        1, & \text{ otherwise.}
      \end{cases}
    \]
    Note that $\cu$ and $\gu$ satisfy the necessary conditions to form a quantum linear space $\mc R( \gu, \cu)$. Let $B(G, \gu, \cu)$ be the bosonization $\mc R\# \kk G$.
    If $N$ is odd, there is an action of $B(G, \gu, \cu)$ on $\mc O_q(M_N(\kk))$ specified by 
    \begin{enumerate}[label=(\alph*)]
       \item for $1 \leq i \leq \frac{N-1}2$, we have $B_i \cong T_{5}(q^2, 5, 0)$ acts as in (1) of Proposition~\ref{prop:matrixactions2} with $b_i = 2i$ and $\delta_i \in \kk^\times$ arbitrary,
       \item for $\frac{N-1}2 < i \leq N-1$, we have $B_i \cong T_{5}(q^2, 5, 0)$ acts as in (2) of Proposition~\ref{prop:matrixactions2} with $a_i = 2 \left(i - \frac{N-1} 2 \right)$ and $\delta_i \in \kk^\times$ arbitrary, 
       \item for $N-1 < i \leq \frac{3(N-1)}2$, we have $B_i \cong T_{5}(q^{-2}, 5, 0)$ acts as in (5) of Proposition~\ref{prop:matrixactions2} with $b_i = 2(i - N+1)$ and $\delta_i \in \kk^\times$ arbitrary, and
       \item for $\frac{3(N-1)}2 < i \leq 2(N - 1)$, we have $B_i \cong T_{5}(q^{-2}, 5, 0)$ acts as in (6) of Proposition~\ref{prop:matrixactions2} with $a_i = 2(i - \frac{3(N-1)}2)$ and $\delta_i \in \kk^\times$ arbitrary.
    \end{enumerate} 
    On the other hand, if $N$ is even, the action is specified by
    \begin{enumerate}[label=(\alph*)]
       \item for $1 \leq i \leq \frac{N}2$, we have $B_i \cong T_{5}(q^2, 5, 0)$ acts as in (1) of Proposition~\ref{prop:matrixactions2} with $b_i = 2i$ and $\delta_i \in \kk^\times$ arbitrary,
       \item for $\frac{N}2 < i \leq N$, we have $B_i \cong T_{5}(q^2, 5, 0)$ acts as in (2) of Proposition~\ref{prop:matrixactions2} with $a_i = 2 \left(i - \frac{N} 2 \right)$ and $\delta_i \in \kk^\times$ arbitrary, 
       \item for $N < i \leq N + \frac{N-2}2$, we have $B_i \cong T_{5}(q^{-2}, 5, 0)$ acts as in (5) of Proposition~\ref{prop:matrixactions2} with $b_i = 2(i - N)$ and $\delta_i \in \kk^\times$ arbitrary, and
       \item for $N + \frac{N-2}2 < i \leq 2N - 2$, we have $B_i \cong T_{5}(q^{-2}, 5, 0)$ acts as in (6) of Proposition~\ref{prop:matrixactions2} with $a_i = 2(i - N - \frac{N-2}2)$ and $\delta_i \in \kk^\times$ arbitrary.
    \end{enumerate} 
    In either case, to see that this indeed defines an action, we need only verify that $g_i x_j - \chi_j(g_i) x_j g_i$ and $x_i x_j - \chi_j(g_i) x_j x_i$ act by zero for $i \neq j$.
  \end{example}

  As for the case $N=2$ (Theorem~\ref{thm.mat2}), the above examples give the most ``patching'' that can be done for such actions.
  
  \begin{theorem} \label{thm.matn}
    Let $q \neq \pm 1$.
    Also, let $B(G,\ul g, \ul \chi)$ be a bosonization of rank $\theta$ with $m_i \neq 2$ for all $i$.
    Suppose $B(G,\ul g, \ul \chi)$  acts on $\mc O_q( M_N(\kk))$ for $N \geq 3$ with each $g_i$ acting as an element of $\mc H \rtimes \langle \tau \rangle$ and each $x_i$ acting linearly and nonzero.
    Then, $\theta \leq 2N - 2$.
  \end{theorem}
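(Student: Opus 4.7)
The strategy mirrors that of Theorem~\ref{thm.mat2}. By Proposition~\ref{prop:matrixactions2}, each subalgebra $B_i \cong T_{n_i}(\lambda_i, m_i, 0) \subset B$ acts on $\cO_q(M_N(\kk))$ via one of the eight action types listed in Table~\ref{table.matn}, where types (1), (2), (5), (6) each carry a row or column index as a parameter, and types (3), (4), (7), (8) are ``corner'' actions moving one corner generator to another. The first step is to build a compatibility analysis for pairs $(B_i, B_j)$: for each pair of action types (with their parameters), check whether the bosonization relations $g_i x_j = \chi_j(g_i) x_j g_i$ and $x_i x_j = \chi_j(g_i) x_j x_i$ can be satisfied for some $\chi_j(g_i) \in \kk^\times$. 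This is a direct computation from Table~\ref{table.matn}; each case reduces to solving a small system of scalar equations in $\chi_j(g_i)$ and the $\alpha$-values of $g_i, g_j$.

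Once the compatibility picture is in place, I would organize the $B_i$ according to whether they shift columns (types (1) and (5)), shift rows (types (2) and (6)), or act by corners. Guided by Example~\ref{ex:matrixaction}, I expect the compatibility computation to produce three structural facts: (a) column-shift and row-shift actions are mutually compatible without restricting each other's count; (b) within the column-shift family (respectively row-shift family), the parameters of any two compatible actions must be spaced far enough apart that the total count is capped at $N-1$; and (c) each corner action is either globally incompatible with most other actions or ``replaces'' one shift in the count. Combining (a)--(c) yields $\theta \le (N-1) + (N-1) = 2(N-1)$, a bound saturated by Example~\ref{ex:matrixaction}.

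The main obstacle is the explicit case analysis used to compile the analogue of Table~\ref{table:m2compat}. Unlike the $N=2$ setting, the shift types come with parameters, so there are essentially $4(N-1)+4$ action types and compatibility must be expressed via conditions on these parameters rather than a finite matrix of cases. A cleaner reformulation that I would attempt in parallel is to encode each shift action as an oriented ``band'' of $N$ parallel edges on the directed graph with vertex set $\{Y_{ij}\}$ (in the spirit of the graph used in Theorem~\ref{thm.QLSrank}), so that compatibility of two actions becomes a combinatorial non-overlap condition on their bands, and corner actions become single edges between corners. In that picture, the bound $2(N-1)$ reduces to a counting argument over the $N$ rows and $N$ columns of the $N \times N$ grid, and the $q$-commutation identities in $\cO_q(M_N(\kk))$ enter only through a finite list of ``forbidden'' band configurations that one needs to verify directly.
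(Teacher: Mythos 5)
Your proposal follows essentially the same route as the paper: invoke Proposition~\ref{prop:matrixactions2} to reduce each $B_i$ to one of the eight listed types, compile the pairwise compatibility conditions (the paper's Table~\ref{table:mNcompat}), and then count. Your expected structural facts (a)--(c) are exactly what the paper's four-case analysis (organized by which of the corner types (3), (4), (7), (8) occur) verifies -- in particular that the column-shift and row-shift families are each capped at $N-1$ and that the at most two admissible corner actions each cost one shift -- so the outline is correct, with the remaining work being the explicit verification of the compatibility table.
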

  
  \begin{proof}
    By Proposition~\ref{prop:matrixactions2}, each $B_i$ must act by one of the eight actions specified therein.
    Conditions for compatibility of actions of $B_i$ and $B_j$ ($i \neq j$) are specified in Table~\ref{table:mNcompat}, the contents of which follow from basic computations.
    The table must be symmetric of course (after switching $i \leftrightarrow j$ and $\zeta \leftrightarrow \zeta^{-1}$).
    Also, Remark~\ref{rem:iso} minimizes the calculations needed.

    \begin{table}
    \caption{Compatibility of actions of $B_i$ and $B_j$ on $\mc O_q(M_N(\kk))$. 
    If the action of $B_i \cong T_{n_i}(\lambda_i, m_i, 0)$ and $B_j \cong T_{n_j}(\lambda_j, m_j, 0)$ (with generators $g_i, \ x_i$ and $g_j, \ x_j$ respectively) contained in $B(G, \ul{g}, \ul{\chi})$ are specified by Proposition~\ref{prop:matrixactions2} corresponding to the number in the first row and column respectively, the conditions of the table are necessary and sufficient for the relations $x_i x_j = \zeta_{ji} x_j x_i$, $g_i x_j = \zeta_{ji} x_j g_i$, and $g_j x_i = \zeta_{ji}^{-1} x_i g_j$ to hold. 
    Here, $\zeta_{ji} = \chi_j(g_i)$, and for $\alpha \in \kk$ and a set $\mc S$, we let $\alpha^{\mc S}_x$ denote $\alpha$ if $x \in \mc S$ and $1$ otherwise.}  \label{table:mNcompat}
    { \setlength{\tabcolsep}{.67em}
    \begin{tabular}{|c|c|c|c|c|c|} 
      \hline
          & 1
          & 2
          & 3
          & 4
          & $\cdots$ \\
      \hline
        1
          & \begin{tabular}{@{}c@{}} 
            $|b_i - b_j| > 1$, \\ 
            $\zeta_{ji} = 1$
            \end{tabular}
          & $\zeta_{ji} = 1$
          & \begin{tabular}{@{}c@{}} 
            $2 < b_j < N$, \\ 
            $\zeta_{ji} = 1$
            \end{tabular}
          & \begin{tabular}{@{}c@{}} 
            $2 < b_j \leq N$, \\ 
            $\zeta_{ji} = 1$
            \end{tabular}
          & $\cdots$ \\
      \hline
        2
          & $\zeta_{ji} = 1$
          & \begin{tabular}{@{}c@{}} 
            $|a_i - a_j| > 1$, \\ 
            $\zeta_{ji} = 1$
            \end{tabular}
          & \begin{tabular}{@{}c@{}} 
            $2 < a_j \leq N$, \\ 
            $\zeta_{ji} = 1$
            \end{tabular}
          & \begin{tabular}{@{}c@{}} 
            $2 < b_j < N$, \\ 
            $\zeta_{ji} = 1$
            \end{tabular}
          & $\cdots$ \\
      \hline
        3
          & \begin{tabular}{@{}c@{}} 
            $2 < b_i < N$, \\ 
            $\zeta_{ji} = 1$
            \end{tabular}
          & \begin{tabular}{@{}c@{}} 
            $2 < a_i \leq N$, \\ 
            $\zeta_{ji} = 1$
            \end{tabular}
          & ---
          & ---
          & $\cdots$ \\
      \hline    
        4
          & \begin{tabular}{@{}c@{}} 
            $2 < b_i \leq N $, \\ 
            $\zeta_{ji} = 1$
            \end{tabular}
          & \begin{tabular}{@{}c@{}} 
            $2 < a_i < N$, \\ 
            $\zeta_{ji} = 1$
            \end{tabular}
          & --- 
          & --- 
          & $\cdots$ \\
      \hline   
        5
          & \begin{tabular}{@{}c@{}} 
            $b_i \neq b_j + 1$, \\ 
            $\zeta_{ji} = q_{b_i}^{\{b_j, b_j + 2\}}$
            \end{tabular}
          & $\zeta_{ji} = 1$
          & $\zeta_{ji} = q_{b_j}^{\{1,N-1\}}$
          & \begin{tabular}{@{}c@{}} 
            $1 < b_j \leq N-1$, \\ 
            $\zeta_{ji} = 1$
          \end{tabular}
          & $\cdots$ \\
      \hline   
        6
          & $\zeta_{ji} = 1$
          & \begin{tabular}{@{}c@{}} 
            $a_i \neq a_j + 1$, \\ 
            $\zeta_{ji} = q_{a_i}^{\{a_j, a_j + 2\}}$
            \end{tabular}
          & \begin{tabular}{@{}c@{}} 
            $1 < a_j \leq N-1$, \\ 
            $\zeta_{ji} = 1$
            \end{tabular}
          & $\zeta_{ji} = q_{a_j}^{\{1,N-1\}}$
          & $\cdots$ \\
      \hline   
        7
          & \begin{tabular}{@{}c@{}} 
            $2 \leq b_i < N$, \\ 
            $\zeta_{ji} = 1$
            \end{tabular}
          & $\zeta_{ji} = q_{a_i}^{\{2,N\}}$
          & $\zeta_{ji} = q^{-2}$
          & $\zeta_{ji} = q^{-2}$
          & $\cdots$ \\
      \hline  
        8
          & $\zeta_{ji} = q_{b_i}^{\{2,N\}}$ 
          & \begin{tabular}{@{}c@{}} 
            $2 \leq a_i < N$, \\ 
            $\zeta_{ji} = 1$
            \end{tabular}
          & $\zeta_{ji} = q^{-2}$
          & $\zeta_{ji} = q^{-2}$
          & $\cdots$ \\
      \hline  
    \end{tabular}
    
    \bigskip
    
    \begin{tabular}{|c|c|c|c|c|c|} 
      \hline
          & $\cdots$
          & 5
          & 6
          & 7
          & 8 \\
      \hline
        1
          & $\cdots$
          & \begin{tabular}{@{}c@{}} 
            $b_j \neq b_i + 1$, \\ 
            $\zeta_{ji} = (q^{-1})^{\{b_i, b_i + 2\}}_{b_j}$
            \end{tabular}
          & $\zeta_{ji} = 1$
          & \begin{tabular}{@{}c@{}}
            $2 \leq b_j < N$,  \\
            $\zeta_{ji} = 1$ 
            \end{tabular}
          & $\zeta_{ji} = (q^{-1})_{b_j}^{\{2, N\}}$ \\
      \hline
        2
          & $\cdots$
          & $\zeta_{ji} = 1$
          & \begin{tabular}{@{}c@{}} 
            $a_j \neq a_i + 1$, \\ 
            $\zeta_{ji} = (q^{-1})_{a_j}^{\{a_i, a_i + 2\}}$
            \end{tabular}
          & $\zeta_{ji} = (q^{-1})_{a_j}^{\{2, N\}}$
          & \begin{tabular}{@{}c@{}}
            $2 \leq a_j < N$,  \\
            $\zeta_{ji} = 1$ 
            \end{tabular} \\
      \hline
        3
          & $\cdots$
          & $\zeta_{ji} = (q^{-1})_{b_i}^{\{1, N-1\}}$
          & \begin{tabular}{@{}c@{}} 
            $1 < a_i \leq N-1 $, \\ 
            $\zeta_{ji} = 1$
            \end{tabular}
          & $\zeta_{ji} = q^2$
          & $\zeta_{ji} = q^2$ \\
      \hline    
        4
          & $\cdots$
          & \begin{tabular}{@{}c@{}} 
            $1 < b_i \leq N-1$, \\ 
            $\zeta_{ji} = 1$
            \end{tabular}
          & \begin{tabular}{@{}c@{}} 
            $\zeta_{ji} = $ \\
            $(q^{-1})_{a_i}^{\{1, N-1\}}$
            \end{tabular}
          & $\zeta_{ji} = q^{2}$
          & $\zeta_{ji} = q^{2}$ \\
      \hline   
        5
          & $\cdots$
          & \begin{tabular}{@{}c@{}} 
            $|b_i - b_j| > 1$, \\ 
            $\zeta_{ji} = 1$
            \end{tabular}
          & $\zeta_{ji} = 1$
          & \begin{tabular}{@{}c@{}} 
            $1 \leq b_j < N - 1$, \\ 
            $\zeta_{ji} = 1$
          \end{tabular}
          & \begin{tabular}{@{}c@{}} 
            $1 < b_j < N - 1$, \\ 
            $\zeta_{ji} = 1$
          \end{tabular} \\
      \hline   
        6
          & $\cdots$
          & $\zeta_{ji} = 1$
          & \begin{tabular}{@{}c@{}} 
            $|a_i - a_j| > 1$, \\ 
            $\zeta_{ji} = 1$
            \end{tabular}
          & \begin{tabular}{@{}c@{}} 
            $1 < a_j < N - 1$, \\ 
            $\zeta_{ji} = 1$
            \end{tabular}
          & \begin{tabular}{@{}c@{}} 
            $1 \leq a_j < N - 1$, \\ 
            $\zeta_{ji} = 1$
            \end{tabular} \\
      \hline   
        7
          & $\cdots$
          & \begin{tabular}{@{}c@{}} 
            $1 \leq b_i < N -1$, \\ 
            $\zeta_{ji} = 1$
            \end{tabular}
          & \begin{tabular}{@{}c@{}} 
            $1 < a_i < N-1$, \\ 
            $\zeta_{ji} = 1$
            \end{tabular}
          & ---
          & --- \\
      \hline  
        8
          & $\cdots$
          & \begin{tabular}{@{}c@{}} 
            $1 < b_i < N-1$, \\ 
            $\zeta_{ji} = 1$
            \end{tabular}
          & \begin{tabular}{@{}c@{}} 
            $1 \leq a_i < N -1$, \\ 
            $\zeta_{ji} = 1$
            \end{tabular}
          & ---
          & --- \\
      \hline  
    \end{tabular}
    }
    \end{table}

    We now use Table~\ref{table:mNcompat} to show that $\theta \leq 2N - 2$.
    First, note that if more than one $B_i$ act as (1) (or (2), (5), or (6)), then the $b$-values (or $a$-values for (2) and (6)) of the corresponding actions must be at least 2 apart.
    Second, note that if a $B_i$ acts as (1) and another acts as (5), then the $b$-value for (1) cannot be exactly one more than the $b$-value for (5).
    Similarly, the $a$-value for any (2) cannot be one more than the $a$-value for any (6).
    Finally, note that if one of the $B_i$ acts as (3), (4), (7), or (8), then without loss of generality, using Remark~\ref{rem:iso}, we can assume $i=1$ and it acts as (3).
    By Table~\ref{table:mNcompat}, no other $B_i$ can act as (3) or (4).
    Also, one of the $B_i$ could act as (7) or (8), but not both.
    Thus, we consider four cases: $B_1$ acts as (3) and $B_2$ acts as (7), $B_1$ acts as (3) and $B_2$ acts as (8), $B_1$ acts as (3) with none of the $B_i$ acting as (7) or (8), and none of the $B_i$ act as (3), (4), (7), or (8).

    \noindent \textbf{Case 1: $B_1$ acts as (3) and $B_2$ acts as (7):}
    In this case, any $B_i$ acting as (1), (2), (5), or (6) must satisfy the following, respectively:
    \[
      3 \leq b \leq N-1, \qquad
      3 \leq a \leq N, \qquad
      1 \leq b \leq N-2, \qquad
      2 \leq a \leq N-2.
    \]
    If $N$ is even, we can have $B_i$ acting as 
    \[ \arraycolsep=.5cm \def\arraystretch{1.2}
       \begin{array}{ll}
          \text{(1) with } b = 3, 5, \ldots, N-1, 
            &\text{(2) with } a = 4, 6, \ldots, N, \\
          \text{(5) with } b = 1, 3, \ldots, N-3, 
            &\text{(6) with } a = 2, 4, \ldots, N-2.
       \end{array} 
    \]
    Thus, including the actions of $B_1$ as (3) and $B_2$ as (7), the largest $\theta$ could be is $2 + 4 \left( \frac {N-2} 2 \right) = 2N - 2$.
    On the other hand, if $N$ is odd, we can have $B_i$ acting as
    \[ \arraycolsep=.5cm \def\arraystretch{1.2}
       \begin{array}{ll}
          \text{(1) with } b = 3, 5, \ldots, N-2, 
            &\text{(2) with } a = 3, 5, \ldots, N, \\
          \text{(5) with } b = 1, 3, \ldots, N-2, 
            &\text{(6) with } a = 3, 5, \ldots, N-2.
       \end{array} 
    \]
    Hence, the largest $\theta$ could be is $2 + 2 \left(\frac{N-3} 2 \right) + 2 \left( \frac {N-1} 2 \right) = 2N - 2$.
    
    \noindent \textbf{Case 2: $B_1$ acts as (3) and $B_2$ acts as (8):}
    In this case, any $B_i$ acting as (1), (2), (5), or (6) must satisfy the following, respectively:
    \[
      3 \leq b \leq N-1, \qquad
      3 \leq a \leq N-1, \qquad
      2 \leq b \leq N-2, \qquad
      2 \leq a \leq N-2.
    \]
    If $N$ is even, we can have $B_i$ acting as 
    \[ \arraycolsep=.5cm \def\arraystretch{1.2}
       \begin{array}{ll}
          \text{(1) with } b = 3, 5, \ldots, N-1, 
            &\text{(2) with } a = 3, 5, \ldots, N-1, \\
          \text{(5) with } b = 3, 5, \ldots, N-3, 
            &\text{(6) with } a = 3, 5, \ldots, N-3.
       \end{array} 
    \]
    Thus, including the actions of $B_1$ as (3) and $B_2$ as (8), the largest $\theta$ could be is $2 + 2 \left(\frac{N-2} 2 \right) + 2 \left( \frac {N-4} 2 \right) = 2N - 4$.
    On the other hand, if $N$ is odd, we can have $B_i$ acting as
    \[ \arraycolsep=.5cm \def\arraystretch{1.2}
       \begin{array}{ll}
          \text{(1) with } b = 4, 6, \ldots, N-1, 
            &\text{(2) with } a = 4, 6, \ldots, N-1, \\
          \text{(5) with } b = 2, 4, \ldots, N-3, 
            &\text{(6) with } a = 2, 4, \ldots, N-3.
       \end{array} 
    \]
    Hence, the largest $\theta$ could be is $2 + 4 \left(\frac{N-3} 2 \right) = 2N - 4$.
    
    \noindent \textbf{Case 3: $B_1$ acts as (3) with none of the $B_i$ acting as (7) or (8):}
    In this case, any $B_i$ acting as (1), (2), (5), or (6) must satisfy the following, respectively:
    \[
      3 \leq b \leq N-1, \qquad
      3 \leq a \leq N, \qquad
      1 \leq b \leq N-1, \qquad
      2 \leq a \leq N-1.
    \]
    If $N$ is even, we can have $B_i$ acting as 
    \[ \arraycolsep=.5cm \def\arraystretch{1.2}
       \begin{array}{ll}
          \text{(1) with } b = 3, 5, \ldots, N-1, 
            &\text{(2) with } a = 3, 5, \ldots, N-1, \\
          \text{(5) with } b = 1, 3, \ldots, N-1, 
            &\text{(6) with } a = 3, 5, \ldots, N-1.
       \end{array} 
    \]
    Thus, including the action of $B_1$ as (3), the largest $\theta$ could be is $1 + 3 \left(\frac{N-2} 2 \right) + \frac {N} 2 = 2N - 2$.
    On the other hand, if $N$ is odd, we can have $B_i$ acting as
    \[ \arraycolsep=.5cm \def\arraystretch{1.2}
       \begin{array}{ll}
          \text{(1) with } b = 3, 5, \ldots, N-2, 
            &\text{(2) with } a = 3, 5, \ldots, N, \\
          \text{(5) with } b = 1, 3, \ldots, N-2, 
            &\text{(6) with } a = 3, 5, \ldots, N-2.
       \end{array} 
    \]
    Hence, the largest $\theta$ could be is $1 + 2 \left(\frac{N-3} 2 \right) + 2 \left(\frac{N-1} 2 \right) = 2N - 3$.
    
    \noindent \textbf{Case 4: none of the $B_i$ act as (3), (4), (7), or (8):}
    In this case, any $B_i$ acting as (1), (2), (5), or (6) must satisfy the following, respectively:
    \[
      2 \leq b \leq N, \qquad
      2 \leq a \leq N, \qquad
      1 \leq b \leq N-1, \qquad
      1 \leq a \leq N-1.
    \]
    If $N$ is even, we can have $B_i$ acting as 
    \[ \arraycolsep=.5cm \def\arraystretch{1.2}
       \begin{array}{ll}
          \text{(1) with } b = 2, 4, \ldots, N, 
            &\text{(2) with } a = 2, 4, \ldots, N, \\
          \text{(5) with } b = 2, 4, \ldots, N-2, 
            &\text{(6) with } a = 2, 4, \ldots, N-2.
       \end{array} 
    \]
    Thus, the largest $\theta$ could be is $2 \left(\frac{N-2} 2 \right) + 2 \left( \frac {N} 2 \right) = 2N - 2 $.
    On the other hand, if $N$ is odd, we can have $B_i$ acting as
    \[ \arraycolsep=.5cm \def\arraystretch{1.2}
       \begin{array}{ll}
          \text{(1) with } b = 2, 4, \ldots, N-1, 
            &\text{(2) with } a = 2, 4, \ldots, N-1, \\
          \text{(5) with } b = 2, 4, \ldots, N-1, 
            &\text{(6) with } a = 2, 4, \ldots, N-1.
       \end{array} 
    \]
    Hence, the largest $\theta$ could be is $4 \left(\frac{N-1} 2 \right) = 2N - 2$.
    
    Thus, by considering all four cases, we see that if $N$ is even or odd, the maximum that $\theta$ could be is $2N - 2$.
  \end{proof}

\section{Additional results}
\label{sec.add}

In this section, we consider invariants of actions on quantum planes, and actions on further families of algebras related to quantum affine spaces and quantum matrix algebras.

\subsection{Invariants}
We study invariants of some of the actions explored above.
Recall that for a Hopf algebra $H$ and an $H$-module algebra $A$, the ring of invariants is defined as $A^H = \{ a \in A \mid h \cdot a = \varepsilon(h)a \}$.
It is clear that for a generalized Taft algebra $T=T_n(\lambda,m,0)$, $A^T = A^{\grp{g}} \cap A^{\grp{x}}$ and for $B=B(G,\gu,\cu)$ of rank $\theta$, $A^B = \cap_{i=1}^\theta A^{B_i}$.


A connected ($\NN$)-graded algebra $A$ is said to be {\sf Artin-Schelter (AS) regular} if it has finite Gelfand-Kirillov dimension, finite global dimension $d$, $\Ext_A^d(\kk) \iso \kk$, and $\Ext_A^i(\kk)=0$ for $i \neq d$.
Furthermore, a noetherian, regular graded domain $A$ of dimension $d$ with Hilbert series $H_A(t)=(1-t)^{-n}$ is a {\sf quantum polynomial ring}.
It is well-known that the algebras $\cO_q(M_N(\kk))$ and $\kk_\bq[u_1,\hdots,u_N]$ are quantum polynomial rings.

\begin{lemma}
\label{lem.inv1}
Assume $T=T_n(\lambda,m,0)$, $m>2$, acts linearly and inner faithfully on $A=\kk_\bp[u_1,u_2,\cdots,u_t]$ such that $\ord(p_{ij})>2$ for all $i \neq j$ and $x \cdot A\neq 0$.
If $T$ acts as a trivial extension of $A_{12}$, then $A^{\grp{x}} = \kk_{\bp'}[u_1,u_2^m,u_3,\hdots,u_t]$ where $p_{i2}' = p_{i2}^m$ (so $p_{2i}'=p_{2i}^m$ also) and $p_{ij}'=p_{ij}$ when $i \neq 2$ and $j \neq 2$.
\end{lemma}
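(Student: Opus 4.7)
The plan is to compute the action of $x$ explicitly on a PBW basis of $A$ and then extract the invariants by inspection. First I would use Proposition~\ref{prop.genaction}(a), together with the trivial-extension hypothesis, to pin down the action completely: $g \cdot u_1 = p_{12} u_1$ and $g \cdot u_2 = \lambda^{-1} p_{12} u_2$, while $x \cdot u_2 = \eta u_1$ for some $\eta \in \kk^\times$, $x \cdot u_1 = 0$, and $x \cdot u_j = 0$ for $j \geq 3$ (with $g$ acting diagonally on each $u_j$).

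The main step is to compute $x$ on a PBW monomial $u_1^{a_1} u_2^{a_2} u_3^{a_3} \cdots u_t^{a_t}$. Because $x$ is a $(g,1)$-skew derivation and vanishes on $u_1$ and on $u_j$ for $j \geq 3$, repeated application of the twisted Leibniz rule $x \cdot (ab) = (g \cdot a)(x \cdot b) + (x \cdot a) b$ collapses to a single contribution coming from $x \cdot u_2^{a_2}$. An induction on $a_2$ --- together with $u_2 u_1 = p_{12}^{-1} u_1 u_2$ to push $u_1$ to the left --- should yield
\[
  x \cdot u_2^{a_2} \;=\; \eta \Big( \sum_{i=0}^{a_2-1} \lambda^{-i} \Big)\, u_1 u_2^{a_2-1},
\]
and the scalar sum vanishes precisely when $m \mid a_2$ since $\lambda$ is a primitive $m$th root of unity. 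Assembling the pieces, $x$ sends each PBW monomial to a scalar multiple of another PBW monomial, with the scalar nonzero iff $m \nmid a_2$.

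Because the PBW monomials are a $\kk$-basis of $A$, the kernel of $x$ --- which equals $A^{\grp{x}}$ --- has $\kk$-basis the monomials $u_1^{a_1} u_2^{a_2} \cdots u_t^{a_t}$ with $m \mid a_2$. I would then observe that this subspace coincides with the subalgebra generated by $u_1, u_2^m, u_3, \ldots, u_t$. Iterating the defining relations of $A$ gives $u_i (u_2^m) = p_{i2}^m (u_2^m) u_i$ for $i \neq 2$, while the remaining commutation constants are unchanged, so these generators satisfy the relations of the quantum affine space $\kk_{\bp'}[u_1, u_2^m, u_3, \ldots, u_t]$. Surjectivity from this abstract algebra onto $A^{\grp{x}}$ is immediate; injectivity follows because its PBW basis maps to a linearly independent subset of the PBW basis of $A$. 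I do not anticipate a real obstacle here --- the only step requiring care is setting up the skew-Leibniz computation so that the $q$-integer coefficient $\sum_{i=0}^{a_2-1} \lambda^{-i}$ emerges cleanly and its vanishing condition is correctly identified as $m \mid a_2$.
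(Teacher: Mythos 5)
Your proposal is correct and follows essentially the same route as the paper: the paper likewise computes $x \cdot (u_1^{i_1}u_2^{i_2}\cdots u_t^{i_t}) = \eta_{12}[i_2]_{\lambda^{-1}}\alpha_1^{i_1}u_1^{i_1+1}u_2^{i_2-1}u_3^{i_3}\cdots u_t^{i_t}$ on the PBW basis (citing \cite[Lemma 2.1]{GWY} for the quantum-plane part rather than rederiving the twisted Leibniz induction) and concludes that the kernel is spanned by monomials with $m \mid i_2$. Your added verification of the presentation of the fixed subalgebra is left implicit in the paper but is routine and correct.
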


\begin{proof}
By \cite[Lemma 2.1]{GWY} and Proposition \ref{prop.genaction} we have
\[
x \cdot (u_1^{i_1} u_2^{i_2} \cdots u_t^{i_t})
	= (x \cdot (u_1^{i_1} u_2^{i_2})) u_3^{i_3} \cdots u_t^{i_t}
	= \eta_{12} [i_2]_{\lambda\inv} \alpha_1^{i_1} u_1^{i_1+1} u_2^{i_2-1} u_3^{i_3} \cdots u_t^{i_t}.
\]
Thus, $x \cdot (u_1^{i_1} u_2^{i_2} \cdots u_t^{i_t})=0$ if and only if $i_2 \cong 0 \mod m$ and $A^{\grp{x}}$ is as claimed.
\end{proof}

It is clear that be above lemma generalizes to $T$ acting as a trivial extension of any $A_{ij}$ by a simple change of variable.

Let $A$ be a connected graded algebra and $G$ a finite subgroup of finite automorphisms.
The trace series of $g \in G$ is defined as
\[ \Tr_A(g,t) = \sum \trace\left(\left. g\right|_{A_i} \right) t^i.\]
The trace series was defined by Jing and Zhang \cite{JZ2,JZ1}.
For our purposes, it suffices to recall the following.
Let $(x_1,\hdots,x_n)$ be a normal regular sequence in $A$ such that $A/(x_1,\hdots,x_n)=\kk$
and $g \cdot x_i=\lambda_i x_i$ for all $i=1,\hdots,n$.
By \cite[Lemma 1.7]{KKZ4}, 
\begin{align}
\label{eq.trace}
\Tr_A(g,t) = \left((1-\lambda_1 t^{\deg x_1}) \cdots (1-\lambda_n t^{\deg x_n})\right)\inv.
\end{align}
We apply this along with a version of Molien's Theorem \cite[Lemma 5.1]{JZ1},
\[ H_{A^G}(t) = \frac{1}{\ord(G)} \sum_{g \in G} \Tr_A(g,t).\]
A {\sf reflection} of a quantum polynomial ring $A$ is a graded automorphism $\rho$ such that
\[ \Tr_A(\rho,t) = (1-t)^{-n}(1-\tornado t)\inv.\]
such that $\tornado \neq 1$.
If $A$ is a quantum polynomial ring, then $A^G$ has finite global dimension if and only if $G$ is generated by reflections \cite[Theorem 1.1]{KKZ1}.
The following is a sort of Shephard-Todd-Chevalley Theorem for generalized Taft actions on $\qp$.

\begin{theorem}
\label{thm.gldim}
Let $T=T_n(\lambda,m,0)$ act on $A=\qp$, $\ord(\mu)=k$, according to Proposition \ref{prop.genaction} (a).
Then $A^T$ is commutative. Moreover, $\gldim A^T < \infty$ if and only if $\mu^m=1$.
\end{theorem}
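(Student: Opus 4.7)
The plan is to leverage the factorization $A^T = A^{\grp{g}} \cap A^{\grp{x}} = (A^{\grp{x}})^{\grp{g}}$. First, Lemma~\ref{lem.inv1} specialized to $t=2$ identifies $A^{\grp{x}}$ with $\kk_{\mu^m}[u, v^m]$; setting $w = v^m$ and $q = \mu^m$, this is the quantum plane $\kk_q[u, w]$, which (after regrading with $\deg u = \deg w = 1$) is a two-dimensional quantum polynomial ring. Using $\lambda^m = 1$, the induced action of $g$ on $A^{\grp{x}}$ is diagonal with $g \cdot u = \mu u$ and $g \cdot w = (\lambda\inv\mu)^m w = \mu^m w = qw$, so $\grp{g}$ becomes a cyclic group of order $k$ of graded automorphisms of $A^{\grp{x}}$.

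For commutativity, I would observe that $u^i w^\ell$ is a $g$-eigenvector with eigenvalue $\mu^{i + m\ell}$, so $A^T$ has basis $\{u^i w^\ell : k \mid i + m\ell\}$. The relation $wu = q\inv uw$ yields
\[
u^i w^\ell \cdot u^{i'} w^{\ell'} = q^{-\ell i'}\, u^{i+i'} w^{\ell+\ell'}, \qquad
u^{i'} w^{\ell'} \cdot u^i w^\ell = q^{-\ell' i}\, u^{i+i'} w^{\ell+\ell'},
\]
so commutativity reduces to $q^{\ell i' - \ell' i} = 1$. The constraints $k \mid i + m\ell$ and $k \mid i' + m\ell'$ together give $\ell i' \equiv -m\ell\ell' \equiv \ell' i \pmod{k}$, hence $k \mid \ell i' - \ell' i$, so $\mu^{m(\ell i' - \ell' i)} = 1$, as required.

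For the global dimension dichotomy, the key tool is \cite[Theorem 1.1]{KKZ1}: since $A^{\grp{x}}$ is a quantum polynomial ring, $A^T$ has finite global dimension if and only if $\grp{g}$ is generated by reflections of $A^{\grp{x}}$. Applying \eqref{eq.trace} to the normal regular sequence $(u, w)$ yields $\Tr_{A^{\grp{x}}}(g^i, t) = ((1-\mu^i t)(1-\mu^{mi} t))\inv$, so $g^i$ is a reflection precisely when exactly one of $\mu^i, \mu^{mi}$ equals $1$. Since $\mu^i = 1$ forces $\mu^{mi} = 1$, this reduces to $\mu^{mi} = 1$ and $\mu^i \neq 1$. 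Writing $d = \gcd(k, m)$, this amounts to $(k/d) \mid i$ and $k \nmid i$, so the reflections generate the subgroup $\grp{g^{k/d}}$ of order $d$. Therefore $\grp{g}$ is reflection-generated iff $d = k$, iff $k \mid m$, iff $\mu^m = 1$. I expect the commutativity check and the reflection count to be routine; the only real care needed is verifying that the regrading of $A^{\grp{x}}$ is compatible with the hypotheses of the KKZ reflection theorem.
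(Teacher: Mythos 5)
Your proposal is correct and follows essentially the same route as the paper: identify $A^{\grp{x}}=\kk_{\mu^m}[u,v^m]$ via Lemma~\ref{lem.inv1}, read off the monomial basis $\{u^\alpha v^{m\ell} : k \mid \alpha+m\ell\}$ of $A^T$ and verify commutativity by the same divisibility computation, then apply \cite[Theorem 1.1]{KKZ1} with the trace formula \eqref{eq.trace} to reduce finiteness of $\gldim A^T$ to $\mu^m=1$. If anything, your reflection analysis is slightly more careful than the paper's, since you check which powers $g^i$ are reflections and identify the reflection subgroup $\grp{g^{k/\gcd(k,m)}}$, whereas the paper only tests whether $g$ itself is a reflection.
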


\begin{proof}
As is our convention, we assume $g=\diag(\mu,\lambda\inv \mu)$, $x \cdot u=0$, and $x \cdot v=\eta u$. 
By Proposition \ref{prop.genaction}, $k \mid n$, and by Lemma \ref{lem.inv1}, $A^{\grp{x}}=\kk_{\mu^m}[u,v^m]$. For any monomial in $A^{\grp{x}}$ we have
\begin{align}
\label{eq.gfix}
g \cdot (u^i v^{mj}) = \mu^i (\lambda\inv \mu)^{mj} u^i v^{mj} = \mu^{i+mj} u^i v^{mj}.
\end{align}
Hence, $u^\alpha v^\beta \in A^T$ if and only if $m \mid \beta$ and $k \mid \alpha + \beta$, and all such monomials form a basis for $A^T$.
To see that $A^T$ is always commutative, note that 
$u^\alpha v^\beta u^a v^b = \mu^{\alpha b -\beta a} u^a v^b u^\alpha v^\beta$.
Thus, if $u^\alpha v^\beta, \ u^a v^b \in A^T$, then since $k \mid {\alpha + \beta}$ and $k \mid {a + b}$, we have $k \mid b (\alpha + \beta) - \beta (a + b)$.

By \cite[Theorem 1.1]{KKZ1}, if $A^T = (A^{\grp{x}})^{\grp{g}}$ has finite global dimension, then $\grp{g}$ must be a reflection. 
Note that $g \cdot u=\mu u$ and $g \cdot v^m = (\lambda\inv \mu)^m v^m= \mu^m v^m$. Hence, by \cite[Lemma 1.7]{KKZ4}, 
\[ \Tr_{A^{\grp{x}}}(g,t) = (1-\mu t)\inv(1-\mu^m t)\inv.\]
Since $\mu \neq 1$, then $g$ is a reflection if and only if $\mu^m=1$.
\end{proof}

\begin{corollary}
Suppose $B(G,\gu,\cu)$ has rank $\theta$ and that $B$ acts linearly and inner faithfully on $A=\kk_\bp[u_1,\hdots,u_t]$.
Assume $t$, $m_i$ for all $i$, and $\ord(p_{ij})$ for all $i \neq j$ are at least 3. 
Suppose, for each $i$, $B_i$ acts as a trivial extension of an action on $A_{1i}$. Then $u_1$ belongs to the center of $A^B$.
\end{corollary}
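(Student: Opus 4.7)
My plan is to reduce the statement to a multiplicative scalar identity which I then verify using Pontryagin duality on $G$. For a monomial $f = u_1^{a_1} u_2^{a_2} \cdots u_t^{a_t} \in A$ of total degree $d$, the commutation relations give $u_1 f = \bigl(\prod_{k=2}^{t} p_{1k}^{a_k}\bigr) f u_1$, so $u_1$ commutes with $f$ if and only if this scalar equals $1$. In the setup of the corollary, I take the $j_i$'s (where $B_i$ is a trivial extension of an action on $A_{1,j_i}$) to exhaust $\{2,\ldots,t\}$; the target identity becomes $\prod_i \mu_i^{a_{j_i}} = 1$, where $\mu_i := p_{1,j_i}$.

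The key observation is that the character $\mu \in \wh{G}$ defined by $\mu(g) u_1 = g\cdot u_1$ satisfies $\mu(g_i)=\mu_i$. By inner faithfulness and the discussion preceding Proposition~\ref{prop.if}, we may assume $\mc R$ is faithful over $G$, in which case Pontryagin duality yields that $\chi_1,\ldots,\chi_\theta$ generate $\wh G$. I write $\mu = \prod_i \chi_i^{c_i}$ for some integers $c_i$, so that $\mu_j = \prod_i \chi_i(g_j)^{c_i}$ for each $j$.

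For $f \in A^B$, the $x_i$-invariance forces $m_i \mid a_{j_i}$ by Lemma~\ref{lem.inv1}, while the $g_i$-invariance (using the eigenvalues computed from Proposition~\ref{prop.genaction} together with the cross-relations $g_ix_k=\chi_k(g_i)x_kg_i$, and $\lambda_i^{a_{j_i}}=1$) becomes $\mu_i^{d}=\prod_k\chi_k(g_i)^{a_{j_k}}$. Substituting the expression for $\mu_i$ and using faithfulness to deduce that the character matrix $T=(\chi_j(g_i))_{i,j}$ has trivial kernel over the relevant $\ZZ_N$, one concludes $a_{j_k}\equiv c_k\,d\pmod N$ for each $k$.

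The final computation is
\[
\prod_i\mu_i^{a_{j_i}}=\prod_i\mu_i^{c_id}=\mu\Bigl(\prod_i g_i^{c_i}\Bigr)^{d}=\Bigl(\prod_{i,j}\chi_j(g_i)^{c_ic_j}\Bigr)^{d}=\Bigl(\prod_i\lambda_i^{c_i^2}\Bigr)^{d},
\]
where the last step uses the QLS antisymmetry $\chi_i(g_j)\chi_j(g_i)=1$ for $i\neq j$ to cancel off-diagonal contributions in the double product. Since $m_i \mid a_{j_i} \equiv c_i\,d \pmod N$ and $m_i\mid N$, we have $m_i \mid c_i\,d$, hence $m_i \mid c_i^2\,d$, so each $\lambda_i^{c_i^2d}=1$ and the product is $1$. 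The main obstacle I anticipate is the step $a_{j_k}\equiv c_k\,d\pmod N$, which requires careful manipulation of the $g_i$-invariance equations together with the character relation $\mu_i = \prod_j \chi_j(g_i)^{c_j}$ to justify invertibility of $T$ over $\ZZ_N$, leveraging the nondegeneracy of the pairing afforded by faithfulness of $\mc R$.
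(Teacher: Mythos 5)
Your argument is entirely different from the paper's, which disposes of the corollary in two lines by invoking the commutativity computation of Theorem \ref{thm.gldim} inside each $A^{B_i}$ and intersecting. Your character-theoretic route has two genuine gaps. First, the reduction ``we may assume $\mc R$ is faithful over $G$'' is not available: passing from $G$ to $G/N$ with $N=\bigcap_i\ker\chi_i$ changes the Hopf algebra, and elements of $N$ may act nontrivially on $A$, so an inner faithful action on $A$ need not factor through the quotient bosonization (inner faithfulness concerns Hopf ideals annihilating $A$, not faithfulness of $G$ on $\Span_\kk\{x_1,\dots,x_\theta\}$). Consequently $\mu$ need not lie in $\grp{\chi_1,\dots,\chi_\theta}$: one can have $\mu(g_i)=p_{1,j_i}$ of order $7$ while every value $\chi_j(g_i)$ is a cube root of unity, and then no decomposition $\mu=\prod_i\chi_i^{c_i}$ exists. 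Second, even granting that decomposition, the step $a_{j_k}\equiv c_k d\pmod N$ does not follow: the quantum linear space axioms impose no multiplicative independence on the $\chi_i$ restricted to $\grp{g_1,\dots,g_\theta}$, so the matrix $(\chi_j(g_i))$ can have a nontrivial kernel over $\ZZ_N$ and the single relation $\prod_j\chi_j(g_i)^{c_jd-a_{j_j}}=1$ does not yield the individual congruences. You flag this second point yourself, but it is a genuine obstruction, not a technicality.

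These gaps cannot be repaired, because the identity $\prod_{k\geq 2}p_{1k}^{a_k}=1$ you reduce to can actually fail for monomials in $A^B$. Take $t=3$, $\omega$ a primitive $7$th root of unity, $p_{12}=p_{23}=\omega$, $p_{13}=\omega^2$, and a rank-$2$ bosonization with $m_2=m_3=3$ and $\chi_2(g_3)=\chi_3(g_2)=1$, where $B_2$ and $B_3$ act as type (a) trivial extensions on $A_{12}$ and $A_{13}$: explicitly $g_2=\diag(\omega,\lambda_2\inv\omega,\omega)$, $g_3=\diag(\omega^2,\omega^2,\lambda_3\inv\omega^2)$, $x_2\cdot u_2=u_1$, $x_3\cdot u_3=u_1$, all other $x_i\cdot u_k=0$. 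One checks directly that all relations are preserved and that the action is inner faithful for $G=\grp{g_2,g_3}\leq\Aut(A)$. The monomial $f=u_1u_2^3u_3^3$ satisfies $x_2\cdot f=x_3\cdot f=0$ (as $3\mid 3$) and $g_2\cdot f=\omega^{7}f=f$, $g_3\cdot f=\omega^{14}f=f$, so $f\in A^B$; yet $u_1f=p_{12}^{3}p_{13}^{3}fu_1=\omega^{2}fu_1\neq fu_1$. Here $\mu\notin\grp{\chi_2,\chi_3}$, which is exactly where your argument breaks. The same example shows that the paper's own deduction does not transfer from $t=2$ to $t\geq 3$: for $t\geq 3$ the $g_i$-invariance of a monomial involves all the variables, so one cannot conclude $\ord(p_{1i})\mid a_1+a_i$ as in Theorem \ref{thm.gldim}. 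The corollary, read as a statement about the centralizer of $A^B$ in $A$, needs an additional hypothesis (for instance $p_{1j}=p_{1k}p_{kj}$ failing for all distinct $j,k\geq 2$, which restores the cancellation in your linear algebra, or restricting to $t=2$).
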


\begin{proof}
Using the argument in Theorem \ref{thm.gldim}, $u_1$ and $u_i$ commute in $A^{B_i}$.
Since $A^B = \cap_i A^{B_i}$ then the result follows.
\end{proof}

Next we determine explicitly the presentation of the fixed ring $A^T$ in certain cases.
Recall that for a graded ring $R$, the {\sf $m$-Veronese subring} is defined as
\[ R_{(m)} = R_0 \oplus R_m \oplus R_{2m} \oplus \cdots.\]

\begin{proposition}
Let $T=T_n(\lambda,m,0)$ act on $A=\qp$, $\ord(\mu)=k$, according to Proposition \ref{prop.genaction} (a).
\begin{enumerate}
\item If $k \mid m$, then $A^T=\kk[u^k,v^m]$.
\item If $m \mid k$, then $A^T = \kk[u^k, u^{k-m}v^m, u^{k-2m}v^{2m}, \ldots, v^k] \cong \kk[a,b]_{\left(\frac{k}{m}\right)}$
\item If $k > m$ and $k-m \mid k$, then $A^T = \kk[u^k, u^{k-m}v^m, v^{\frac{km}{k-m}}] \cong \kk[a,b,c] / (ab - c^{\frac{k}{k-m}})$
\end{enumerate}
\end{proposition}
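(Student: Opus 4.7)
The plan is to build on the basis for $A^T$ obtained in the proof of Theorem \ref{thm.gldim}: a monomial $u^\alpha v^\beta$ lies in $A^T$ if and only if $m\mid\beta$ and $k\mid\alpha+\beta$, and $A^T$ is a commutative domain. Thus $A^T$ is the $\kk$-span of the numerical semigroup $S = \{(\alpha,\beta)\in \NN^2 : m\mid\beta,\ k\mid\alpha+\beta\}$, twisted by the scalar cocycle coming from $uv = \mu vu$. In each case my strategy is to (i) find the minimal generators of $S$, (ii) compute products in $A$ while tracking the $\mu$-powers, and (iii) rescale generators to absorb those scalars, yielding the claimed presentation.

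For (1), the hypothesis $k\mid m$ forces $k\mid\beta$, so $S = k\NN\oplus m\NN$ is free on $(k,0)$ and $(0,m)$. Since $\mu^m=1$ we have $u^kv^m = v^m u^k$, and algebraic independence in the domain $A$ gives $A^T = \kk[u^k,v^m]$ as a polynomial ring.

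For (2), set $\ell=k/m$ and $\sigma_j := u^{k-mj}v^{mj}$ for $0\leq j\leq\ell$. Writing a general element of $S$ as $(kn-mj,mj)$ with $0\leq j\leq\ell n$, induction on $n$ shows the $\sigma_j$ generate $A^T$: for $n\geq 2$ one splits off a factor $\sigma_{j'}$ with $j'=\min(j,\ell)$, reducing to smaller $n$. A direct calculation (using $\mu^k=1$) yields $\sigma_j\sigma_{j'} = \mu^{m^2 jj'}\sigma_0\sigma_{j+j'}$ whenever $j+j'\leq\ell$. Setting $\tilde\sigma_j := \mu^{m^2\binom{j}{2}}\sigma_j$ then gives $\tilde\sigma_j\tilde\sigma_{j'}=\tilde\sigma_0\tilde\sigma_{j+j'}$, which are exactly the defining relations of the $\ell$-th Veronese $\kk[a,b]_{(\ell)}$ under the correspondence $\tilde\sigma_j\leftrightarrow a^{\ell-j}b^j$. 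The case $j+j' > \ell$ is handled analogously.

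For (3), set $s = k/(k-m)$; since $k-m\mid k$ also forces $k-m\mid m$, we have $m=(s-1)(k-m)$ and $\gcd(k,m)=k-m$. The minimal generators of $S$ are $(k,0)$, $(k-m,m)$, and $(0,sm)$ (where $sm = km/(k-m)$); denote the corresponding monomials by $\sigma_a,\sigma_b,\sigma_c$. A direct computation gives
\[
\sigma_b^s = (u^{k-m}v^m)^s = \mu^{-m(k-m)\binom{s}{2}}u^{s(k-m)}v^{sm} = \mu^{-mk(s-1)/2}\sigma_a\sigma_c.
\]
Since $\kk$ is algebraically closed, rescaling $\sigma_b$ by an $s$-th root of $\mu^{mk(s-1)/2}$ produces generators satisfying $\sigma_b^s = \sigma_a\sigma_c$. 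That this is the only relation follows by comparing Hilbert series: both $A^T$ and $\kk[a,b,c]/(ab - c^s)$, with $a,b,c$ placed in degrees $k,sm,k$ respectively, have Hilbert series $(1-t^{sk})/((1-t^k)^2(1-t^{sm}))$, so the obvious surjection must be an isomorphism. This yields $A^T \cong \kk[a,b,c]/(ab - c^{k/(k-m)})$ via $a\mapsto\sigma_a$, $b\mapsto\sigma_c$, $c\mapsto\sigma_b$.

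The main technical issue throughout is that the $\mu$-scalars arising from the noncommutativity of $A$ assemble into a $2$-cocycle on $S$; the hypothesis $\mu^k=1$ is precisely what makes this cocycle cohomologically trivial, so the rescalings above always succeed in flattening the multiplicative structure of $A^T$ to match the stated commutative quotients.
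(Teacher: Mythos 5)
Your proposal follows essentially the same route as the paper: both start from the description of $A^T$ as the span of the monomials $u^\alpha v^\beta$ with $m \mid \beta$ and $k \mid \alpha+\beta$, identify the generators of the corresponding semigroup in each case, and in case (3) finish by comparing Hilbert series with $\kk[a,b,c]/(ab-c^{k/(k-m)})$. One point in your favor: in case (2) you explicitly confront the $\mu$-scalars (the relation $\sigma_j\sigma_{j'}=\mu^{m^2jj'}\sigma_0\sigma_{j+j'}$ and the rescaling $\tilde\sigma_j=\mu^{m^2\binom{j}{2}}\sigma_j$), whereas the paper's one-line justification ``the isomorphism is given by $u^m\mapsto a$, $v^m\mapsto b$'' glosses over the fact that $u^{mi}v^{mj}\mapsto a^ib^j$ is not on its face an algebra map; your cocycle-trivialization is the honest version of that step, and your computation does check out (including the $j+j'>\ell$ case). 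On the other hand, in case (3) you assert two facts that are exactly where the paper's proof does its work: first, that $(k,0)$, $(k-m,m)$, $(0,km/(k-m))$ generate the semigroup $S$ --- this needs the short argument that after reducing $\alpha$ modulo $k$ and $\beta$ modulo $km/(k-m)$, the congruence $\alpha\equiv -\ell m \pmod k$ with $0\le\alpha<k$ forces $\alpha=(k-m)\ell$; and second, that $H_{A^T}(t)=(1-t^{sk})/\bigl((1-t^k)^2(1-t^{sm})\bigr)$, which requires actually counting $\dim A^T_{pk}$ (the paper gets $p+\lfloor p/s'\rfloor+1$ with $s'=m/(k-m)$ and resums). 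Both assertions are true and your stated series agrees with the paper's after the change of convention $s\leftrightarrow s'+1$, but as written these are claims, not proofs, so they should be filled in before the argument is complete.
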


\begin{proof}
By \eqref{eq.gfix}, we have that $A^T$ has basis $\{u^\alpha v^\beta \mid m \mid \beta, \ k \mid \alpha + \beta\}$.

(1) Suppose $k \mid m$.
Then, assuming $m \mid \beta$, we have $k \mid \alpha + \beta$ if and only if $k \mid \alpha$. Thus, $A^T = \kk[u^k,v^m]$.
This recovers \cite[Lemma 2.1]{GWY} in the case that $m=n$ (when $T$ is a Taft algebra).

(2) Since $m \mid k$, we have that $m \mid \alpha$ for any basis element as well. 
Thus, in this case, $k \mid \alpha + \beta$ if and only if $k/m \mid \alpha/m + \beta/m$.
The basis elements commute, and the isomorphism is given by $u^m \mapsto a$ and $v^m \mapsto b$.

(3) We first show that $A^T$ is generated by $u^k, u^{k-m}v^m, v^{\frac{km}{k-m}}$.
Let $u^\alpha v^\beta \in A^T$.
Without loss of generality, $0 \leq \alpha < k$ and $0 \leq \beta < \frac {km}{k-m}$.
Since $m \mid \beta$, let $\ell = \beta / m$.
Since $k | \alpha + \beta$, we have that $\alpha$ is the unique integer with $0 \leq \alpha < k$ satisfying $\alpha \equiv -\ell m \mod k$.
However, $(k-m)\ell$ is a solution, and $0 \leq (k-m)\ell = \frac{(k-m)\beta}{m} < k$.
Thus, $\alpha = (k-m) \ell$, so $u^\alpha v^\beta = \left( u^{k-m} v^m \right)^\ell$.
Therefore, the generators are as claimed.
Now, by mapping $a \mapsto u^k$, $b \mapsto v^{\frac{km}{k-m}}$, and $c \mapsto u^{k-m}v^m$, we get a surjective homomorphism $\kk[a,b,c] / (ab - c^{\frac{k}{k-m}}) \to A^T$.
We show that this is an isomorphism by considering the Hilbert series.
Note that $\frac k {k-m} - 1 = \frac m {k-m}$ and thus $k-m \mid m$ as well.
Let $s = m / (k-m)$.
Considering $\deg(a) = \deg(c)= k$ and $\deg(b) = sk$, by additivity of the Hilbert series, we have that 
\[
  H_{k[a,b,c]/(ab - c^{\frac{k}{k-m}})} = \frac { H_{k[c]/(c^{\frac{k}{k-m}})} }{(1-t^k) (1-t^{sk})}
    = \frac {1 + t^k + \ldots + t^{sk}} {(1-t^k) (1-t^{sk})}.
\]
On the other hand, $\dim{A^T_{pk}} = p + \lfloor \frac p s  \rfloor + 1$, as we now show.
To see this, let $p = q s + r$ for $0 \leq r < s$, and note that $q = \lfloor \frac p s \rfloor$. 
Then $pk = pm + p(k-m) = (p + q)m + r(k-m)$ and $0 \leq r(k-m) <m$.
Therefore, $A^T_{pk} = \textup{span}_\kk\{u^{pk}, \ u^{pk - m} v^m, \ u^{pk - 2m} v^{2m}, \ldots, u^{r(k-m)} v^{(p+q)m} \}$, giving the desired dimension.
Thus, we have
\[
  H_{A^T} = \sum_{p = 0}^\infty \left( p + \left\lfloor \frac p s \right\rfloor + 1 \right) t^{pk} 
    = \frac {1 + \sum_{p = 1}^\infty \left(1+ \left\lfloor \frac p s \right\rfloor - \left\lfloor \frac {p-1} s \right\rfloor \right) t^{pk} } {(1-t^k)}.
\]
Now $\left\lfloor \frac p s \right\rfloor - \left\lfloor \frac {p-1} s \right\rfloor$ is $1$ if $s \mid p$ and $0$ otherwise.
Thus, we have
\[
  H_{A^T} = \frac {1 + \sum_{p = 1}^\infty t^{pk} + \sum_{q = 1}^\infty t^{qsk} } {(1-t^k)}
    =\frac {1 + t^k + \ldots + t^{sk}} {(1-t^k) (1-t^{sk})}. \qedhere
\]

\end{proof}

\subsection{Quantum exterior algebras}

The Koszul dual of the quantum affine space $\kk_\bp[u_1,\hdots,u_t]$ is the {\sf quantum exterior algebra},
$\bigwedge_\bp(u_1^*,\hdots,u_t^*)$, generated by $u_1^*,\hdots,u_t^*$ subject to the relations
$u_i^* u_j^* + p_{ji} u_j^*u_i^*=0$ and $(u_i^*)^2=0$.
In the case $t=2$, we represent this algebra simply as $\bigwedge_\mu(u^*,v^*)$ where $\mu=p_{12}$.

Suppose $T=T_n(\lambda,m,0)$ acts linearly and inner faithfully on $A=\kk_\mu[u,v]$ according to Proposition \ref{prop.genaction} (a).
Let $A^!=\bigwedge_\mu(u^*,v^*)$ be the Koszul dual of $A$.
Let $S=T_n(\lambda\inv,m,0)$ and set the canonical generators to be $h$ and $y$. There is an action of $S$ on $A^!$ given by
\[ h \cdot u^*=\mu u^*, \quad h \cdot v^*=\lambda\inv \mu v^*, 
\quad y \cdot u^*=\eta v^*, \quad y \cdot v^*=0.\]
It is clear that $h$ is an automorphism of $B$.
We verify below that $y$ acts on $A^!$,
\begin{align*}
y \cdot (u^*)^2 
    &= (h \cdot u^*)(y \cdot u^*)+(y \cdot u^*)u^* 
    = (\mu u^*)(\eta v^*)+(\eta v^*)u^* = 0, \\
y \cdot (v^*)^2 
    &= (h \cdot v^*)(y \cdot v^*)+(y \cdot v^*)v^* = 0, \\
y \cdot (u^*v^*+\mu\inv v^*u^*)
	&= [ (h \cdot u^*)(y \cdot v^*)+(y \cdot u^*)v^*] + \mu\inv [ (h \cdot v^*)(y \cdot u^*)+(y \cdot v^*)u^*] \\
	&=  (\eta v^*)v^* +\mu\inv (\lambda\inv \mu v^*)(\eta v^*) = 0
\end{align*}

\begin{lemma}
\label{lem.exterior}
Suppose $T=T_n(\lambda,m,0)$ acts linearly and inner faithfully on $A=\kk_\bp[u_1,\hdots,u_t]$.
Then there is an action of $S=T_n(\lambda\inv,m,0)$ on $A^!=\Lambda_\bp(u_1^*,\hdots,u_t^*)$
where, as matrices, $h=g$ and $y=x^T$.
\end{lemma}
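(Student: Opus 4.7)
The strategy is to verify directly that the prescribed actions on $V^* := A^!_{(1)}$ extend to a well-defined action of $S = T_n(\lambda\inv, m, 0)$ on $A^!$. Under the standing hypotheses of the paper (cf.~Lemma~\ref{lem.diagonal}), $g$ acts diagonally on $A$, so we may write $g \cdot u_i = \alpha_i u_i$ and $x \cdot u_k = \sum_i \eta_{ik} u_i$. Define $h \cdot u_i^* = \alpha_i u_i^*$ and $y \cdot u_j^* = \sum_i \eta_{ji} u_i^*$ on $V^*$, then extend $h$ as an algebra automorphism and $y$ via the $(h,1)$-skew Leibniz rule. Two things must then be checked: (I) $h, y$ satisfy the defining relations of $S$ as operators, and (II) the extended action preserves the defining relations of $A^!$.

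Step (I) is a short transposition argument. Since $g$ is diagonal we have $g^T = g$, so transposing $gx = \lambda xg$ gives $x^T g = \lambda g x^T$, i.e.\ $yh = \lambda hy$ or equivalently $hy = \lambda\inv yh$. Likewise $h^n = g^n = 1$ and $y^m = (x^T)^m = (x^m)^T = 0$ are immediate.

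For Step (II), $h$ scales each degree-$2$ monomial by a scalar, so the relations $(u_i^*)^2 = 0$ and $u_i^* u_j^* + p_{ji} u_j^* u_i^* = 0$ are visibly preserved. The key case is $y$. The cleanest route is via Koszul duality: writing $V = A_{(1)}$ and $R = \Span\{u_i \otimes u_j - p_{ij} u_j \otimes u_i\} \subset V \otimes V$, we have $A = T(V)/(R)$ and $A^! = T(V^*)/(R^\perp)$. The operator $\delta_x : V \otimes V \to V \otimes V$ defined by $\delta_x(v \otimes w) = (g \cdot v) \otimes (x \cdot w) + (x \cdot v) \otimes w$ preserves $R$ precisely because $T$ acts on $A$. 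A short index-level calculation shows that the corresponding operator $\delta_y$ on $V^* \otimes V^*$ is exactly the adjoint $\delta_x^*$ under the natural pairing $V^* \otimes V^* \iso (V \otimes V)^*$; hence $\delta_y$ preserves $R^\perp$, which is the span of the defining relations of $A^!$. Alternatively, one can verify the two quadratic relations of $A^!$ directly, generalizing the $t=2$ computation given immediately before the lemma. Either way, the main (mild) obstacle is the index bookkeeping — keeping track of where the transpose enters — which is kept manageable by the diagonality of $g$.
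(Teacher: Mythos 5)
Your proposal is correct, but the core of your Step (II) is a genuinely different argument from the paper's. The paper first invokes Theorem~\ref{thm.nspace} to reduce to the case where $T$ acts as a trivial extension of an action on some $A_{ij}$ (so that $x$ has essentially one nonzero entry), and then verifies the handful of relation types of $A^!$ by explicit computation, using identities such as $\alpha_k = p_{1k}p_{k2}$ extracted from the $T$-stability of the relations of $A$. Your Koszul-duality route bypasses the classification entirely: once one fixes the pairing $\langle f\otimes f', v\otimes w\rangle = f(v)f'(w)$, one checks that $R^\perp$ is exactly the span of $(u_i^*)^{\otimes 2}$ and $u_i^*\otimes u_j^* + p_{ji}\,u_j^*\otimes u_i^*$ (a dimension count plus the identity $p_{ij}p_{ji}=1$), that $(\phi\otimes\psi)^* = \phi^*\otimes\psi^*$ so $\delta_x^* = g^T\otimes x^T + x^T\otimes 1 = h\otimes y + y\otimes 1 = \delta_y$ (here diagonality of $g$ gives $g^T=g=h$), and that $\delta_x(R)\subseteq R$ forces $\delta_x^*(R^\perp)\subseteq R^\perp$. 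This is cleaner and strictly more general — it applies to any linear action with $g$ diagonal, with no case analysis, and in particular silently covers the $A_{ijk}$-type actions that the paper's proof does not explicitly address. Two points you assert without proof deserve a line each: the identification of $R^\perp$ with the span of the relations of $A^!$ under your chosen (unflipped) pairing, since with the flipped convention the adjoint of $\delta_x$ would be $y\otimes h + 1\otimes y$ and the argument would break; and the standard fact that for a quadratic algebra, preserving the degree-two relations under $h\otimes h$ and $\delta_y$ suffices to stabilize the whole relation ideal. Your Step (I) (transposing $gx=\lambda xg$ and using $(x^m)^T=0$, with the usual remark that $y^m$ is $(h^m,1)$-skew-primitive so vanishing on generators implies vanishing everywhere) coincides with the paper's opening computation.
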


\begin{proof}
Assume that $T$ acts on $A$ as a trivial extension of an action on $A_{12}$.
The proof for arbitrary $A_{ij}$ is similar.
First note that
\[ 0=(gx-\lambda xg)^T = x^T g^T - \lambda g^T x^T = yh-\lambda hy.\]
It remains to show that $A^!$ is an $S$-module algebra.
By the above argument, $S$ acts on $(A^!)_{12}$.
Suppose $j,k>2$. 
Then clearly 
$y \cdot  (u_k^*)^2 =0$, $y \cdot (u_2^*u_k^*+p_{k2}u_k^*u_2^*) =0$,
and $y \cdot ( u_j^*u_k^* + p_{kj}u_k^*u_j^*)=0$. 
It remains to check that 
$y \cdot ( u_1^*u_k^* + p_{k1}u_k^*u_1^*)=0$.
Note that
\[ 0=x \cdot (u_2u_k-p_{2k}u_ku_2)=u_1u_k-p_{2k}\alpha_ku_ku_1
	= (p_{1k}-p_{2k}\alpha_k)u_ku_1.\]
Hence, $\alpha_k = p_{1k}p_{k2}$. Now
\begin{align*}
y \cdot (u_1^*u_k^* + p_{k1}u_k^*u_1^*) 
	&= (y \cdot u_1^*)u_k^* + p_{k1}(h \cdot u_k^*)(y \cdot u_1^*) \\
	&= (\eta_{12} u_2^*)(u_k^*) + p_{k1}(\alpha_k u_k^*)(\eta_{12} u_2^*) \\
	&= \eta_{12}\left( -p_{k2} + p_{k1}\alpha_k \right)u_k^*u_2^* = 0.
\end{align*}
It follows that $A^!$ is an $S$-module algebra.
\end{proof}

The above proposition extends in a natural way to higher rank actions.
Given some $\gu=\{g_1,\hdots,g_\theta\} \subset G$, set $\gu^T=\{g_1^T,\hdots,g_\theta^T\}$ and similarly for $\cu$.

\begin{proposition}
\label{prop.exterior}
Suppose $B(G,\gu,\cu)$ has rank $\theta$, and that $B$ acts linearly and inner faithfully on $A=\kk_\bp[u_1,\hdots,u_t]$.
Assume $t$, $m_i$ for all $i$, and $\ord(p_{ij})$ for all $i \neq j$ are at least 3. 
Then there is an action of $B'(G,\gu^T,\cu^T)$ on $A^!=\Lambda_\bp(u_1^*,\hdots,u_t^*)$.
\end{proposition}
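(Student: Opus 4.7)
The plan is to bootstrap from Lemma~\ref{lem.exterior} applied to each sub-Hopf-algebra $B_i \subset B$, then verify the cross-relations of $B'(G,\gu^T,\cu^T)$ as operator identities on $A^!$.

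By Lemma~\ref{lem.diagonal} each $g_i$ acts diagonally on $A_{(1)}$, so its matrix coincides with its transpose and lifts to an algebra automorphism of $A^!$ via the same diagonal formula $g_i \cdot u_k^* = \alpha_{ik} u_k^*$ whenever $g_i \cdot u_k = \alpha_{ik}u_k$. For each $i$, Lemma~\ref{lem.exterior} applied to $B_i \iso T_{n_i}(\lambda_i,m_i,0)$ produces an action of $T_{n_i}(\lambda_i^{-1},m_i,0)$ on $A^!$ with $y_i \defeq x_i^T$ acting as a $(g_i,1)$-skew derivation on $A^!$; in particular, $y_i$ preserves the defining relations of $A^!$ and $y_i^{m_i}=0$.

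To assemble these into an action of $B'(G,\gu^T,\cu^T)$, it remains to verify the cross-relations $g_iy_j = \chi_j^{-1}(g_i)y_jg_i$ and $y_iy_j = \chi_j^{-1}(g_i)y_jy_i$ (for $i \neq j$) as operators on all of $A^!$. On $A^!_{(1)}$ these follow by transposing the corresponding matrix identities on $A_{(1)}$: the relation $g_ix_j = \chi_j(g_i)x_jg_i$ transposes (using $g_i^T = g_i$) to $g_iy_j = \chi_j^{-1}(g_i)y_jg_i$, and $x_ix_j = \chi_j(g_i)x_jx_i$ transposes to $y_iy_j = \chi_j^{-1}(g_i)y_jy_i$. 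These match exactly the relations of $B'$, whose defining characters are the $\chi_j^{-1}$ (so that the compatibility $\chi_i^{-1}(g_j)\chi_j^{-1}(g_i)=1$ is inherited from $B$).

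To promote these identities from $A^!_{(1)}$ to all of $A^!$, I would induct on total degree. For $a,b \in A^!$ of lower degrees, expand $(y_iy_j - \chi_j^{-1}(g_i)y_jy_i)(ab)$ via the twisted Leibniz rule, then substitute the commutation rules $y_ig_j = \chi_i(g_j)g_jy_i$ and $g_iy_j = \chi_j^{-1}(g_i)y_jg_i$ (known by induction) and invoke $\chi_i(g_j)\chi_j(g_i)=1$. After cancellation the remaining terms each contain a factor of the form $(y_iy_j - \chi_j^{-1}(g_i)y_jy_i)\cdot a$ or the same expression applied to $b$, which vanish by the inductive hypothesis. The analogous argument handles $g_iy_j - \chi_j^{-1}(g_i)y_jg_i$. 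The main obstacle is precisely this bookkeeping: one must carefully juggle four Leibniz expansions and the four commutation identities so that every mixed term cancels, with the compatibility $\chi_i(g_j)\chi_j(g_i)=1$ providing the crucial signs.
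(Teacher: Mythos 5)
Your proposal is correct and follows essentially the same route as the paper: apply Lemma~\ref{lem.exterior} to each rank-one subalgebra $B_i$, then check that the cross-relations of $B'$ (with characters $\chi_j^{-1}$) hold because they are the transposes of the relations satisfied by $B$ acting on $A$. The paper leaves the compatibility check as a one-line assertion; your transposition of the matrix identities on $A_{(1)}$ together with the observation that $y_iy_j-\chi_j(g_i)^{-1}y_jy_i$ and $g_iy_j-\chi_j(g_i)^{-1}y_jg_i$ act as skew derivations (so that vanishing on generators propagates to all of $A^!$) is exactly the missing detail, and it works.
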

\begin{proof}
Lemma~\ref{lem.exterior} implies that each $B_i$ acts on $A^!$ as a trivial extension of some $(A^!)_{ij}$ or $(A^!)_{ijk}$.
That $B$ acts on $A$ implies that $B'$ satisfies the necessary compatibility conditions to define an action on $A^!$.
\end{proof}

\subsection{Quantized Weyl algebras}
Let $\bp$ be a multiplicatively antisymmetric $(t \times t)$-matrix and $\gamma=(\gamma_1,\hdots,\gamma_t) \in (\kk^\times)^t$. 
Then $\wanp$ is the algebra with basis $\{u_i,v_i\}$,
$1 \leq i \leq t$, subject to the relations
\begin{align*}
	v_iv_j &= p_{ij} v_jv_i & & (\text{all } i,j) & u_iv_j &= p_{ji} v_ju_i & & (i < j) \\
	u_iu_j &= \gamma_i p_{ij} u_ju_i & & (i < j) & u_iv_j &= \gamma_j p_{ji} v_ju_i & & (i > j) \\
	u_jv_j &= 1 + \gamma_j v_ju_j + \sum_{\ell<j}(\gamma_\ell-1)v_\ell u_\ell & & 
			(\text{all } j).
\end{align*}
The {\sf (multiparameter) quantized Weyl algebra} may be regarded 
as $\gamma$-difference operators on $\kk_\bp[u_1,\hdots,u_t]$. 

We study generalized Taft actions on multiparameter quantum Weyl algebras
that are related to the actions for quantum affines spaces studied previously. Recall that from Proposition \ref{prop.genaction} we understand rank one actions on the first quantum Weyl algebra.

Suppose either the center of $\wanp$ is trivial or is a polynomial ring.
By \cite[Proposition 1.5]{AD} ($t=1$), \cite[Theorem 4.2.5]{R} (trivial center), 
and \cite[Corollary 6.5]{LY} (polynomial ring center), $\phi \in \Aut(\wanp)$ has one of two forms
\begin{enumerate}
\item For all $j \in 1,\hdots,t$, $\phi(u_j)=\alpha_j u_j$ and $\phi(v_j)= \alpha_j\inv v_j$ for some scalars $\alpha_j \in \kk^\times$.
\item Fix $k \in 1,\hdots,t$, then 
  	\begin{itemize}
  	\item $\phi(u_k)=\alpha_k v_k$ and $\phi(v_k)=-\alpha_k\inv u_k$ for some scalar $\alpha_k \in \kk^\times$;
	\item for $j \neq k$, $\phi(u_j)=\alpha_j u_j$ and $\phi(v_j)= \alpha_j\inv v_j$ for some scalars $\alpha_j \in \kk^\times$;
	\end{itemize}
	\end{enumerate}
The second type only occurs under specific conditions on the parameters in the nontrivial center case that we may safely ignore by our hypotheses. The automorphism group in cases not considered above is unknown.

There is a filtration $\cF$ on $\wanp$ defined by setting $\deg(u_i) = \deg(v_i)=i$. 
That is, $\cF=\{F_i\}$ where $F_k = \Span_\kk\{ a \in \wanp \mid \deg(a) \leq k\}$, so that $\wanp = \bigcup_{i \geq 0} F_i$ and $F_iF_j \subset F_{i+j}$.
The associated graded graded ring with respect to this filtration, $\gr_{\cF}(\wanp) = \bigoplus_{i \geq 0} F_i/F_{i-1}$, is a quantum affine space.
An action of a Hopf algebra $H$ {\sf respects the filtration} $\cF$ if $h \cdot F_k \subset F_k$ for all $k$ and all $h \in H$. 

\begin{proposition}
\label{prop.weyl}
Suppose $B(G,\gu,\cu)$ has rank $\theta$, and that $B$ acts inner faithfully on $A=\wanp$ respecting the filtration $\cF$.
Assume the parameters for $B$ and $\gr_\cF A$ are all roots of unity of order at least 3. Then $\rank B \leq 2(2t-1)$.
\end{proposition}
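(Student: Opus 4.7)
The plan is to reduce to an application of Theorem \ref{thm.QLSrank} via the associated graded construction. Since the action of $B$ on $A$ respects the filtration $\cF$, it descends to a graded action on $\gr_{\cF} A$. Computing the leading terms of the defining relations of $\wanp$ (the relation $u_j v_j = 1 + \gamma_j v_j u_j + \sum_{\ell<j}(\gamma_\ell - 1) v_\ell u_\ell$ contributes only the leading term $\bar u_j \bar v_j = \gamma_j \bar v_j \bar u_j$, while the remaining relations are already filtration-homogeneous) shows that $\gr_{\cF} A$ is, as an algebra, a quantum affine space on the $2t$ generators $\bar u_1, \bar v_1, \ldots, \bar u_t, \bar v_t$, whose parameters are monomials in $\bp$ and $\gamma$ of order at least $3$ by hypothesis. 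Forgetting the original grading from $\cF$ and regarding these generators as lying in degree $1$ makes $\gr_{\cF} A$ into a quantum affine space in the sense of Theorem \ref{thm.QLSrank}, where the bound for $2t$ generators is exactly $2(2t-1)$.

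It then suffices to verify that the induced action on $\gr_{\cF} A$ is linear (preserves the span of the $2t$ generators) and inner faithful. For each $g_i$, the induced map is an automorphism of the quantum affine space $\gr_{\cF} A$, so by \cite[Lemma 3.5(e)]{KKZ1} it acts as a monomial matrix on the generators, and after a relabeling we may assume (as in Lemma \ref{lem.diagonal}) that each $g_i$ acts diagonally on generators. For inner faithfulness, each $x_i$ acts nonzero on $A$ by Proposition \ref{prop.if}; combining this with the twisted derivation rule and the fact that $A$ is generated by $\{u_j, v_j\}$, one argues that $x_i$ must act nonzero on some generator, and its leading term yields a nonzero action of $\bar x_i$ on some $\bar u_{j_0}$ or $\bar v_{j_0}$ in $\gr_{\cF} A$; after replacing $G$ by a suitable quotient as in the discussion preceding Proposition \ref{prop.if}, the induced action becomes inner faithful.

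The main obstacle is showing that each $\bar x_i$ is linear on the generators. For the degree-$1$ generators $\bar u_1, \bar v_1$, this is automatic since $(\gr_{\cF} A)_1 = \kk \bar u_1 \oplus \kk \bar v_1$ in the original grading. For $\bar u_k, \bar v_k$ with $k \geq 2$, however, the degree-$k$ piece (in the original grading of $\gr_{\cF} A$) also contains non-generator monomials such as $\bar u_j \bar v_{k-j}$, and $\bar x_i \cdot \bar u_k$ could a priori produce these. The skew-commutation relation $g_i x_i = \lambda_i x_i g_i$ confines $\bar x_i \cdot \bar u_k$ to a single $g_i$-eigenspace within the degree-$k$ piece; the parameter hypothesis (orders at least $3$ for $\gr_{\cF} A$) is then used to rule out eigenvalue coincidences between $\lambda_i \alpha_{ik}$ and the eigenvalues of the non-generator monomials, paralleling the role played by $\lambda^2 \neq 1$ and $p_{ij}^2 \neq 1$ throughout the proof of Lemma \ref{lem.xprops}. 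Once linearity is established, Theorem \ref{thm.QLSrank} applied to $\gr_{\cF} A$ delivers $\rank B \leq 2(2t - 1)$.
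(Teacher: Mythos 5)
Your proposal takes essentially the same route as the paper: pass to the associated graded algebra, which is a quantum affine space on the $2t$ generators $\bar u_1,\bar v_1,\ldots,\bar u_t,\bar v_t$, and apply Theorem \ref{thm.QLSrank} to obtain the bound $2(2t-1)$. The only real difference is that the paper disposes of the linearity and inner faithfulness of the induced action in one stroke by citing \cite[Lemma 3.1]{CWWZ}, whereas you attempt a direct eigenvalue argument for these points; that sketch (in particular, ruling out coincidences between $\lambda_i\alpha_{ik}$ and the $g_i$-eigenvalues of non-generator monomials in the same filtration degree) is the one place where details remain to be supplied, and invoking the cited lemma is the cleaner way to close it.
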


\begin{proof}
Because $B$ respects the filtration on $A$, then the action of $B$ descends to an inner faithful, linear action on $\gr_\cF A$ \cite[Lemma 3.1]{CWWZ}. The result now follows from Theorem \ref{thm.QLSrank}.
\end{proof}

It is not true that every action preserves $\cF$, as illustrated by the next example.

\begin{example}
Consider $A=A_2^{\bp,\gamma}$ and let $T=T_n(\lambda,m,0)$.
Set $\gamma_1=\gamma_2=\lambda$, $\alpha_1=\lambda \alpha_2$, and $\alpha_2=p_{12}$.
We define a diagonal action of $g$ on $A$ as above by setting $g=\diag(\alpha_1,\alpha_1\inv,\alpha_2,\alpha_2\inv)$.
Suppose $x \cdot u_2=u_1$ and $x \cdot v_1=-\alpha_2\inv v_2$, and that $x$ acts as zero on all other generators. It is left to check that $x \cdot r=0$ for all relations $r$. We do the one check below and leave the rest to the reader.
\begin{align*}
x \cdot (u_2v_1-\gamma_1p_{12} v_1u_2)
	&= (u_1v_1 - \gamma_1p_{12} \alpha_1\inv v_1u_1) - \alpha_2\inv ( \alpha_2 u_2v_2 - \gamma_1p_{12} v_2u_2 ) \\
	&= ( 1 + \gamma_1v_1u_1 - v_1u_1) - (1+\gamma_2v_2u_2 + (\gamma_1-1)v_1u_1) + \alpha_2\inv \gamma_1p_{12} v_2u_2 \\
	&= ( 1 + (\gamma_1-1)v_1u_1) - (1 + (\gamma_1-1)v_1u_1) + (-\gamma_2 + \gamma_1)v_2u_2 = 0.
\end{align*}
\end{example}

\subsection{Quantum general and special linear groups}

The {\sf quantum determinant} of $\qmn$ is the central element
\[ \detq = \sum_{\pi \in \cS_N} (-q)^{\ell(\pi)} Y_{1,\pi(1)} Y_{2,\pi(2)} \cdots Y_{n,\pi(n)},\]
where $\ell(\pi)$ denotes the length of the permutation $\pi$. 
In the case $n=2$, this is the element $AD-qBC$.
The quantum determinant can then be used to define the corresponding 
{\sf quantum general linear group} $\qgl=\qmn[\detq\inv]$ and 
{\sf quantum special linear group} $\qsl=\qmn/(\detq-1)$. 

\begin{proposition}
\label{prop.slgl}
Suppose $T_n(\lambda,m,0)$ acts on $\mc O_q(M_N(\kk))$ according to 
\begin{itemize}
    \item rows 1,2,4, or 5 of Table \ref{table.mat2} in the case $N=2$, or
    \item rows 1,2,5, or 6 of Table \ref{table.matn} in the case $N>2$.
\end{itemize}
Then the action descends to an action on $\mc O_q(\SL_N(\kk))$ and lifts to an action on $\mc O_q(\GL_N(\kk))$.
\end{proposition}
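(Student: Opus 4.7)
The plan is to reduce the proposition to two identities for each listed action: (i) $g \cdot \detq = \detq$, and (ii) $x \cdot \detq = 0$. Granted these, descent to $\qsl$ is automatic: since $\varepsilon(g) = 1$ and $\varepsilon(x) = 0$, one has $g \cdot (\detq - 1) = \detq - 1$ and $x \cdot (\detq - 1) = 0$, so the principal ideal $(\detq - 1)$ is $T_n(\lambda,m,0)$-stable. The lift to $\qgl$ follows from the centrality of $\detq$ in $\qmn$: the grouplike $g$ extends via $g \cdot \detq^{-1} = \detq^{-1}$, and the identity
\[
  0 = x \cdot 1 = x \cdot (\detq \cdot \detq^{-1}) = (g \cdot \detq)(x \cdot \detq^{-1}) + (x \cdot \detq) \detq^{-1}
\]
combined with (ii) yields $x \cdot \detq^{-1} = 0$.

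For (i), I would observe that in every listed row of Tables~\ref{table.mat2} and~\ref{table.matn}, $g \in \mc H$ acts via a rank-one matrix $(\alpha_{ij}) = (a_i b_j)$. Consequently $\prod_{i=1}^N \alpha_{i,\pi(i)} = \prod_i a_i \prod_j b_j$ is independent of $\pi \in \cS_N$, so $g \cdot \detq$ is a scalar multiple of $\detq$; inspection of each listed row shows the scalar equals $q \cdot q^{-1} = 1$.

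For (ii), I would treat the column-shift case, e.g.\ $x \cdot Y_{a,b} = \delta Y_{a,b-1}$ for all $a$ and a fixed $b$ (with $x$ vanishing on $Y_{c,d}$ for $d \neq b$); the row-shift case is symmetric via Remark~\ref{rem:iso}. The iterated skew-Leibniz rule produces exactly one nonzero summand in $x \cdot Y_{1,\pi(1)} \cdots Y_{N,\pi(N)}$, occurring at position $k_\pi \defeq \pi^{-1}(b)$. I would pair each $\pi \in \cS_N$ with $\sigma \defeq (b{-}1,\,b) \circ \pi$, and set $j_\pi \defeq \pi^{-1}(b{-}1) = \sigma^{-1}(b)$. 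A direct comparison shows that after applying $x$, the monomials arising from $\pi$ and $\sigma$ reduce to the same product: each has $Y_{\cdot, b-1}$ at positions $k_\pi$ and $j_\pi$ and $Y_{i,\pi(i)}$ at all other positions. The initial $g$-segment of the Leibniz expansion contributes a factor $q$ if $j_\pi < k_\pi$ and $1$ otherwise, while the standard inversion formula gives $\ell(\sigma) - \ell(\pi) = +1$ if $j_\pi < k_\pi$ and $-1$ otherwise, forcing the paired contributions to cancel in $x \cdot \detq$.

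The main obstacle will be the $q$-power bookkeeping in this pairing step: one must verify that the eigenvalue contribution $q^{\pm 1}$ combines with the sign $(-q)^{\pm 1}$ from the length change to produce exact cancellation. In each listed row this reduces to a single identity involving $\lambda = q^{\pm 2}$ and the explicit entries of $(\alpha_{ij})$, which is routine once set up.
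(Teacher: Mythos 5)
Your proof is correct, and your reduction to the two identities $g \cdot \detq = \detq$ and $x \cdot \detq = 0$ (with descent to $\qsl$ and lift to $\qgl$ following from stability of the ideals $(\detq-1)$ and $(\detq)$, using centrality of $\detq$) is exactly the paper's. Where you diverge is in how $x \cdot \detq = 0$ is established. The paper fixes a column $i \neq b, b-1$, applies the quantum Laplace expansion $\detq = \sum_k (-q)^{i-k} A_{ki} Y_{ki}$, and inducts on $N$: the minors $A_{ki}$ generate copies of $\mc O_q(M_{N-1}(\kk))$ on which the induced action is again a column shift, so $x \cdot A_{ki} = 0$ by induction and $x \cdot Y_{ki} = 0$ by hypothesis. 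This is short but requires $N \geq 3$ for such an $i$ to exist, so the $N=2$ case must be done by hand as a base case. Your sign-reversing pairing $\pi \leftrightarrow (b{-}1,b)\circ\pi$ is uniform in $N$, needs no base case and no Laplace identity, and the bookkeeping you flag as the main obstacle does close up: writing $j = \pi^{-1}(b-1)$, $k = \pi^{-1}(b)$, the two paired terms are literally the same word in the generators, with coefficients $q\,(-q)^{\ell(\pi)}\delta$ and $1\cdot(-q)^{\ell(\pi)+1}\delta$ when $j<k$, and $1\cdot(-q)^{\ell(\pi)}\delta$ and $q\,(-q)^{\ell(\pi)-1}\delta$ when $j>k$ (the $g$-prefix of the \emph{partner} term contributing the complementary factor), so each pair cancels. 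Your justification of $g \cdot \detq = \detq$ via the rank-one structure of $(\alpha_{ij})$ fills in a step the paper dismisses as clear, and your direct verification that $x \cdot \detq^{-1} = 0$ makes explicit what the paper delegates to the citation of Montgomery--Schneider. Both arguments are valid; yours trades the convenience of a known expansion formula for a self-contained combinatorial cancellation.
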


\begin{proof}
We claim in the cases listed above that both $(\det_q-1)$ and $(\detq)$ are $T$-stable ideals. Hence, the action descends to an action on $\mc O_q(M_N(\kk))/(\detq-1)$ and lifts to an action on $\mc O_q(M_N(\kk))[\detq\inv]$ \cite[Corollary 3.14]{MS}.
Since $\detq$ is central, it suffices to prove for both cases that $g \cdot \detq=\det_q$ and $x \cdot \det_q=0$.

Fix $b>1$ and suppose $\delta \neq 0$.
We prove this for the case that $g \cdot Y_{a,b}=q\inv Y_{a,b}$, $g \cdot Y_{a,b-1}=qY_{a,b-1}$, $x \cdot Y_{a,b}=\delta Y_{a,b-1}$ and $x \cdot Y_{r,s}=0$ for all $a$ and for all $r,s$ with $s \neq b,b-1$. This corresponds to row 1 in either Table \ref{table.mat2} or \ref{table.matn}.
A similar argument applies to the other cases.

It is clear that $g \cdot \detq=\detq$. We will prove the result for $x$ using induction and the {\it quantum Laplace expansion} \cite[Corollary 4.4.4]{PW}.
First suppose that $N=2$ and consider the action defined in the first row of Table \ref{table.mat2}, which corresponds to the action above. Recall that the quantum determinant in the case of $\mc O_q(M_2(\kk))$ is $\detq=AD-qBC$. Hence,
\[ x \cdot \detq = \left[ (qA)(\delta C) + 0 \right] - q\left[ 0+(\delta A)C\right] = 0.\]

Now suppose $N \geq 3$ and fix $i \neq b,b-1$.
Expanding along the $i$th column, we have
\[ \detq = \sum_{k=1}^N (-q)^{i-k} A_{ki} Y_{ki},\]
where $A_{ki}$ is the $(k,i)$-quantum minor of $\mc O_q(M_N(\kk))$. By induction, $x \cdot A_{ki}=0$, and since $x \cdot Y_{ki}=0$, then it follows that $x \cdot \detq=0$.
\end{proof}

\section{Questions and remarks}
\label{sec.ques}

Bosonizations of quantum linear spaces are important examples of finite-dimensional pointed Hopf algebras, however the full classification is much more robust \cite{AS1}. Moreover, we placed restrictions on our parameters that may ultimately be artificial. (See Proposition \ref{prop.genaction}.) It would be interesting to know how the bounds presented in this paper fit into the story of more general actions.

\begin{question}
Let $H$ be a finite-dimensional pointed Hopf algebra acting linearly and inner faithfully on a quantum affine space or quantum matrix algebra.
Does $\rank(H)$ satisfy the same bounds as in Theorems \ref{thm.QLSrank}, \ref{thm.mat2}, or \ref{thm.matn}?
\end{question}

On the other side, there are many important families of quantum algebras for which we have not or have only partially considered the problem of classifying actions.
In Section \ref{sec.add}, we classified induced actions on certain families of algebras. This leads naturally to a question of whether a more full classification is possible.
Of these, actions on quantum exterior algebras and quantized Weyl algebras seem the most within reach.

\begin{question}
Do the bounds in Theorems \ref{thm.QLSrank}, \ref{thm.mat2}, or \ref{thm.matn} apply also to the ``related'' algebras considered in Section \ref{sec.add}?
\end{question}

The quantum matrix algebras considered in this paper are the single-parameter versions of a larger class of {\it multiparameter} quantized matrix algebras (see \cite{BGquantum}). It was clear to us that the classification problem for generalized Taft algebras in this case is substantially more difficult. Nevertheless, under suitable restrictions, it seems reasonable that one could attack this problem with some level of success.

\begin{question}
Does the classification of generalized Taft algebra actions on multiparameter quantized matrix algebras align with the single-parameter versions given in Propositions \ref{prop:matrixactions} and \ref{prop:matrixactions2}? Do the bounds given in Theorems \ref{thm.mat2} and \ref{thm.matn} still apply?
\end{question}

More generally, we wonder whether there are methods that can simplify or consolidate some of the computations above.

\begin{question}
Are there algebra invariants that control actions of pointed Hopf algebras? Locally nilpotent derivations are controlled by the (noncommutative) discriminant \cite{CPWZ1}. Is there an analogue for skew derivations associated to generalized Taft algebras?
\end{question}

In Section \ref{sec.add} we studied invariants of actions under generalized Taft actions. However, we were only able to determine properties and the form of the fixed ring in certain cases.

\begin{question}
In general, is there is a nice presentation of the fixed ring $\kk_\mu[u,v]$ under a generalized Taft action? When does the fixed ring of $\kk_\bp[u_1,\hdots,u_t]$ under a generalized Taft action have finite global dimension?
\end{question}

\subsection*{Acknowledgment}
The second author was partially supported by a grant from the Miami University Senate Committee on Faculty Research.
The authors thank Chelsea Walton for helpful conversations. 

\bibliographystyle{plain}

\end{document}